\newtheorem{theorem}{Theorem}[section]
\newtheorem{lemma}[theorem]{Lemma}
\newtheorem{proposition}[theorem]{Proposition}
\newtheorem{conjecture}[theorem]{Conjecture}
\newtheorem{question}[theorem]{Question}
\theoremstyle{definition}
\newtheorem{definition}[theorem]{Definition}
\newtheorem{remark}[theorem]{Remark}
\newenvironment{property}[1]{%
  \propertyinner
}{\endpropertyinner}
\def\de{\delta}
\def\om{\omega}
\def\Om{\Omega}
\def\si{\sigma}
\def\bN{\mathbb{N}}
\def\bR{\mathbb{R}}
\def\bZ{\mathbb{Z}}
\def\cG{\mathcal{G}}
\def\cL{\mathcal{L}}
\def\cP{\mathcal{P}}
\def\F{\mathsf{F}}
\def\hF{\widehat{\mathsf{F}}}
\def\sS{\mathsf{S}}
\def\Fix{\mathop{\mathrm{Fix}}}
\def\Homeo{\mathop{\mathrm{Homeo}}}
\def\id{\mathrm{id}}{\tiny }
\def\Orb{\mathop{\mathrm{Orb}}}
\def\RiSt{\mathop{\mathrm{RiSt}}\nolimits}
\def\St{\mathop{\mathrm{St}}\nolimits}
\def\Sti{\mathop{\mathrm{St}}\nolimits^\bullet}
\def\Stp{\mathop{\mathrm{St}}\nolimits^\star}
\def\supp{\mathop{\mathrm{supp}}}
\title{On maximal subgroups of ample groups}
\author{Rostislav Grigorchuk \and Yaroslav Vorobets}
\date{}
\begin{document}

\maketitle

\begin{abstract}
The paper is concerned with maximal subgroups of the ample (better known as 
topological full) groups of homeomorphisms of totally disconnected compact 
metrizable topological spaces.  We describe all maximal subgroups that are 
stabilizers of finite sets.  Under certain assumptions on the ample group 
(including minimality), we describe all maximal subgroups that are 
stabilizers of closed sets or stabilizers of partitions into clopen sets.  
In particular, our results apply to the ample groups associated with Cantor 
minimal systems and to some Higman-Thompson groups.
\end{abstract}



%
\section{Introduction}

In this paper we study maximal subgroups of the ample groups (the latter 
are better known as the topological full groups).  Let us begin with an
overview of maximal and related to them weakly maximal subgroups.

A proper subgroup $H$ of a group $G$ is \emph{maximal} if there is no group
placed between $H$ and $G$ in the lattice of subgroups of $G$.  Maximal 
subgroups play a very important role in group theory.  The task of 
describing all maximal subgroups of a group $G$ is equivalent to the task 
of classifying all primitive actions of $G$ (an action of the group on a 
set $X$ is \emph{primitive} if it preserves no nontrivial equivalence 
relations on $X$).  Indeed, if $H$ is a maximal subgroup of $G$ then the 
natural action of $G$ on the cosets of $H$ is primitive.  Conversely, if 
$\alpha:G\curvearrowright X$ is a primitive action on a set $X$ that is not
a singleton then the point stabilizers $\St_\alpha(x)$, $x\in X$ are
maximal subgroups of the group $G$.  Maximal subgroups can also arise as 
the stabilizers of sets or collections of sets (e.g., partitions).

The problem of describing all maximal subgroups of a given group attracted 
a lot of attention.  Among the most remarkable results here is the complete 
solution of this problem for finite symmetric groups (based on the 
O'Nan-Scott theorem), which was obtained in the 1980s as one of the first 
applications of the classification of finite simple groups (see, e.g., 
Section 8.5 in the book \cite{DM}).  There is also an understanding on how 
to approach this problem for general finite groups (as outlined by 
Aschbacher and Scott \cite{AS85}).  Much less is known about maximal 
subgroups of infinite groups.  One may be interested whether such a group 
has a maximal subgroup of infinite index.  For countable groups, this 
question is closely related to the question about \emph{primitivity} of the 
group, that is, existence of a faithful primitive action.  Another question 
is to determine how many maximal subgroups are there, say, whether a given 
countable group has a \emph{continuum} (that is, an uncountable set of 
cardinality $2^{\aleph_0}$) of maximal subgroups.  If an infinite group 
does not have many maximal subgroups, it makes sense to look for 
\emph{weakly maximal subgroups}, which are subgroups of infinite index 
maximal with respect to this property.  Note that in the case a countable 
group $G$ is finitely generated, any subgroup of finite index in $G$ is 
finitely generated as well, which implies that there are at most countably 
many such subgroups.  More importantly, it follows that any proper subgroup 
of $G$ is contained in a maximal subgroup while any subgroup of infinite 
index is contained in a weakly maximal subgroup.

A substantial progress in the late 1970s was achieved by Margulis and 
Soifer \cite{MS77,MS79,MS81}.  Answering a question of Platonov, they 
obtained fundamental results on maximal subgroups of finitely generated 
linear groups.  The first of their results is that any such group admits a 
maximal subgroup of infinite index if and only if it is not virtually 
solvable.  Another result is that the free nonabelian group $F_n$ on 
$n\ge2$ generators has a continuum of maximal subgroups.  These results 
were extended by Gelander and Y. Glasner \cite{GG08} to general countable 
linear groups.  Later Gelander and Meiri \cite{GM16} showed that each of 
the groups $SL_n(\bZ)$, $n\ge3$ has a continuum of maximal subgroups.  For 
more on these and related topics, see the survey \cite{GGS20}.

A completely different story is told by the groups of branch type.  The 
class of the \emph{branch groups} was introduced in \cite{Grig00} in 
relation to the study of just infinite  groups (these are infinite groups 
in which every proper quotient is finite) and as an abstract model behind 
the family of groups $\mathcal{X}=\{G_\om\}$, 
$\om\in\{0,1,2\}^{\mathbb{N}}$ constructed in \cite{Grig84}.  This family 
mostly consists of groups of intermediate growth (faster than polynomial 
but slower than exponential).  Two notable representatives of the family 
are the groups $\mathcal{G}_{(012)^{\infty}}$ (the ``first'' Grigorchuk 
group) and $\mathcal{G}_{(01)^{\infty}}$ (the Grigorchuk-Erschler group) 
given by periodic sequences $(012)^{\infty}=012012\ldots$  and 
$(01)^{\infty}=0101\ldots$.  These two groups were also at the root of 
studies of the class of \emph{self-similar groups} (see the book 
\cite{Nek05}).  Among recent discoveries is the fact that they are related 
to substitution subshifts generated by primitive substitutions, in 
particular, the period  doubling and Morse subshifts (see 
\cite{MatteBon,GLN17,GrigVorob}).  These subshifts are important 
representatives of the Cantor minimal systems (the latter are important in 
our study).

The question of Hartly from 1993 about existence of maximal subgroups of 
infinite index in the group $\mathcal{G}_{(012)^{\infty}}$ was answered in 
the negative by Pervova \cite{Per00}.  This result made a big impact on the 
study of $\mathcal{G}_{(012)^{\infty}}$ and led to establishing such 
properties as subgroup separability (also called the LERF property) and 
decidability of generalized word problem (see \cite{GrWilson03}).  In 
\cite{Per05}, Pervova extended her result to some branch groups in a family
that was named the GGS (Grigorchuk-Gupta-Sidki) groups in the book 
\cite{Bau}.  The groups considered by Pervova are torsion groups (that is, 
all elements are of finite order), and presently there are no known 
examples of finitely generated torsion branch groups admitting a maximal 
subgroup of infinite  index.  Recently Francoeur and Thillaisundaram 
\cite{FT} extended Pervova's result to all GGS groups, including 
non-torsion groups.  On the other hand, Bondarenko \cite{Bond} constructed 
an example of a non-torsion branch group with a maximal subgroup of 
infinite index.  Later Francoeur and Garrido \cite{FG} discovered that, in 
fact, the group $\mathcal{G}_{(01)^{\infty}}$ has maximal subgroups of 
infinite index.  There are only countably many of those and they are all 
described.  Also, it was shown that the non-torsion iterated monodromy 
groups of the tent map (a special case of some groups first introduced by 
\v{S}uni\'c in \cite{Sun} as “siblings of the Grigorchuk group”) have 
exactly countably many maximal subgroups of infinite index (those are 
described up to conjugacy).  Presently there are no known examples of 
finitely generated branch groups with uncountably many maximal subgroups.  
Another relevant fact is that the branch groups cannot act 
quasi-2-transitively on infinite sets (see \cite{Francoeur21}).

The \emph{weakly branch groups} are a far going generalization of the class
of branch groups.  Their introduction was initiated by the studies around 
the Basilica group $\mathcal{B}$ invented in \cite{GriZuk01} as a group 
generated by a $3$-state automaton over the binary alphabet.  Bartholdi and 
Virag \cite{BarthVirag} proved amenability of $\mathcal{B}$, which produced 
the first example of amenable but not subexponentially amenable group 
(answering the question from \cite{Grig98} about existence of such 
groups).  Later it was discovered that $\mathcal{B}$ is isomorphic to the  
iterated monodromy group of the polynomial $z^2-1$ 
\cite{BarGrigNek,Nek05}.  In the paper \cite{Francoeur2020}, Francoeur 
extended techniques of Pervova to a large class of weakly branch groups, 
which allowed him not only to prove that any maximal subgroup in a branch 
group is itself branch, but also to prove that the Basilica group has 
maximal subgroups of infinite index and hence is primitive (as each proper 
quotient of $\mathcal{B}$ is virtually nilpotent).

The next class of infinite groups whose maximal subgroups attracted 
attention of researchers are the \emph{Higman-Thompson type groups}.  
Savchuk \cite{Sav10,Sav15} showed that all orbits of the action of 
Thompson's group $F$ on the interval $(0,1)$ are primitive, which implies 
that the point stabilizers $\St_F(x)$, $x\in(0,1)$ are maximal subgroups of
infinite index.  The associated Schreier coset graphs are quasi-isometric 
to a tree.  It was also shown that $\St_F(x)$ is not finitely generated if 
$x$ is irrational while being finitely generated for $x\in\bZ[1/2]$.  This 
study was extended by Golan and Sapir \cite{GS17,GS17b,GS17a}, Aiello and 
Nagnibeda \cite{AN21b,AN21a}, and others.  Among other results, Golan and 
Sapir produced the first example of a maximal subgroup of $F$ that does not 
arise as the stabilizer of a point.  Another Thompson's group $V$ is 
considered by Belk, Bleak, Quick and Skipper in \cite{BBQS22} where an 
uncountable family of maximal subgroups of $V$ is produced.  Moreover, it 
is shown that there are uncountably many pairwise non-isomorphic maximal 
subgroups of $V$.  In \cite{BBQS24}, the same four authors proved that 
Thompson's group $T$ is yet another maximal subgroup of the group $V$.

Weakly maximal subgroups of finitely generated infinite groups can often 
play a role similar to that of maximal subgroups of infinite index.   For 
instance, they are useful in the study of profinite groups (Barnea and 
Shalev \cite{BarneaShalev}).  As was observed in \cite{BarGri00,BarGri02}, 
stabilizers of points on the boundary of the rooted tree on which a weakly 
branch group acts are weakly maximal subgroups.  In the case of branch 
groups from the family $\mathcal{X}$, the associated Schreier coset graphs 
have surprisingly simple, linear geometric structure.  Almost all of them 
are quasi-isometric to the Cayley graph of $\bZ$.  When taking into account 
edge labels, the structure becomes more complicated but still controlled.  
For the group $\mathcal{G}_{(012)^{\infty}}$, for example, the graphs 
represent the so-called aperiodic order (see \cite{GLN17}).

Branch groups, weakly branch groups and groups of Higman-Thompson type are 
subclasses of the class of \emph{micro-supported groups}.  Elements of this 
class are groups $G$ acting faithfully on a topological space $X$ in such a 
way that for any nonempty open subset $U\subset X$, the rigid stabilizer 
$\RiSt_G(U)$ (also referred to as the local subgroup and denoted $G_U$), 
which consists of elements acting trivially on $X\setminus U$, is 
nontrivial.  Also, all these groups represent an even wider class of
\emph{dynamically defined groups}, the object of a fast developing new area 
on the border between theory of dynamical systems and group theory.  A 
recent book by Nekrashevych \cite{Nek2022} is an excellent source of 
information on methods and results of this direction of mathematics.

A rich source of dynamically defined groups are groups that will be called 
in this paper the \emph{ample groups}.  The idea of amplification (or 
saturation) in dynamics and group theory is quite simple.  Given a 
topological space $X$ and a group $G$ of its homeomorphisms, one can 
enlarge $G$ to a group $\F(G)$ by adding those homeomorphisms of $X$ that 
act locally as elements of $G$.  We call the group $\F(G)$ the \emph{full 
amplification} of $G$.  The group $G$ is called \emph{ample} if $\F(G)=G$. 
Note that $\F(\F(G))=\F(G)$ so that the group $\F(G)$ is always ample.  
This idea works best when $X$ is a Cantor set or, more generally, a totally
disconnected compact metrizable space.  This is because the topology on
such a space is generated by clopen (i.e., both closed and open) sets.  
Clopen sets allow to cut all homeomorphisms in $G$ into pieces, then new 
homeomorphisms can be constructed, as a jigsaw puzzle, out of those
pieces.

In what follows, all topological spaces are assumed to be totally
disconnected, compact and metrizable.  There are many situations when 
continuous group actions on such spaces arise so that the above
construction can be used.  For example, any countable group $G$ acts 
naturally by permutations on the space $A^G$, where $A$ is a finite set
with more than one element.  The actions of $G$ on closed invariant subsets 
of $A^G$ are now a popular area of studies.  The group of automorphisms of 
an infinite, locally finite tree acts naturally on the boundary of the 
tree.  Any countable group $G$ acts by conjugation on the space 
$\mathrm{Sub}(G)$ of its subgroups, where the topology on $\mathrm{Sub}(G)$ 
is induced by the product topology on $2^G$.  This list can go on.
    
The notion of an ample group was introduced by Krieger in \cite{Krieger}.  
The groups considered in \cite{Krieger} are locally finite (that is, every 
finitely generated subgroup is finite), and so this notion has seen limited
use being applied only to locally finite groups.  We would like to extend 
it to arbitrary groups of homeomorphisms replacing the common notion of a 
topological full group.  The latter originated in the theory of Cantor 
minimal systems.  A \emph{Cantor minimal system} $(X,f)$ consists of a 
Cantor set $X$ and a minimal homeomorphism $f:X\to X$.  In the paper 
\cite{GW}, E. Glasner and Weiss associated to $(X,f)$ two groups, the 
\emph{full group} $[f]$ and the \emph{finite full group} $[[f]]$ (notation 
is from respectively \cite{GPS} and \cite{Matui}).  The full group $[f]$ 
consists of all homeomorphisms of $X$ that leave invariant every orbit of 
$f$.  It is relevant to the study of orbit equivalence.  Any map $g\in[f]$ 
can be given by a formula $g(x)=f^{n(x)}(x)$, $x\in X$ for some function 
$n:X\to\bZ$.  Since the minimal homeomorphism $f$ has no periodic points, 
it follows that the function $n$ is unique and its level sets are closed.  
The finite full group $[[f]]$ consists of those $g\in[f]$ for which the 
function $n$ is continuous or, equivalently, takes only finitely many 
values.  In that case, all level sets of $n$ are clopen.  Informally, 
elements of $[f]$ are ``pointwise'' elements of the cyclic group $\langle 
f\rangle$ while elements of $[[f]]$ are ``piecewise'' elements of $\langle 
f\rangle$.  It is easy to observe that $[[f]]=\F(\langle f\rangle)$, the 
full amplification of the cyclic group generated by $f$.  The group $[[f]]$
was renamed the \emph{topological full group} (TFG) in \cite{GPS}.  An 
important property of amplification is that the ample group $\F(G)$ is 
countable whenever $G$ is countable.  In particular, the TFG $[[f]]$ is 
always countable whereas the full group $[f]$ is not.

The remarkable result proved by Giordano, Putnam and Skau \cite{GPS} is 
that the (isomorphism class of) TFG $[[f]]$ is an almost complete invariant
of dynamics of a Cantor minimal system $(X,f)$.  To be precise, if $(X,f)$ 
and $(Y,g)$ are two Cantor minimal systems such that the groups $[[f]]$ and
$[[g]]$ are isomorphic then the systems are \emph{flip conjugate}, which 
means that for some homeomorphism $\phi:X\to Y$ we have $\phi f\phi^{-1}=g$
or $g^{-1}$.  As there is a continuum of pairwise non-flip-conjugate Cantor
minimal systems (e.g., systems with different entropies), one gets a 
continuum of pairwise non-isomorphic TFGs.  The result is based on two 
fundamental facts about TFGs.  For simplicity, let us assume that $X=Y$.  
The first fact, established in \cite{GPS}, is that if the TFGs $[[f]]$ and 
$[[g]]$ are isomorphic then any isomorphism between them is implemented by
a conjugation in the group $\Homeo(X)$ of all homeomorphisms of $X$.  This 
property of TFGs seems to be characteristic for various kinds of 
micro-supported groups (see Section 2.2 in the book \cite{Nek2022}).  The 
second fact, derived from an older result of Boyle, is that if the ample 
groups $[[f]]$ and $[[g]]$ are the same then the cyclic groups $\langle 
f\rangle$ and $\langle g\rangle$ are conjugate in the group $[[f]]=[[g]]$.

The systematic study of group-theoretic properties of the TFG $[[f]]$ 
associated with a Cantor minimal system $(X,f)$ was initiated by Matui 
\cite{Matui}.  He showed that the commutator subgroup of $[[f]]$ is
simple.  In the case when $f$ is a minimal subshift over a finite alphabet,
the commutator subgroup is finitely generated (but not finitely presented).
The group $[[f]]$ admits a nontrivial homomorphism onto $\bZ$ and the 
kernel of that homomorphism is a product of two locally finite subgroups 
(the so-called factorization property).  Further, the group $[[f]]$ has the
property LEF (local embeddability into finite groups), see \cite{GrigMed}.  
Any finite group and the free abelian group of infinite rank embed into 
$[[f]]$.  Every branch group from the family $\mathcal{X}$ mentioned before
embeds into some TFG (Matte Bon \cite{MatteBon}).  The remarkable result 
obtained by Juschenko and Monod \cite{JuschMonod} (confirming a conjecture 
of Grigorchuk and Medynets \cite{GrigMed}) states that any TFG associated 
with a Cantor minimal system is amenable.  Note that these groups are not 
elementary amenable, which means that they cannot be obtained from finite 
and abelian groups using certain natural operations that preserve 
amenability.  This is the second known type of amenable but not elementary 
amenable groups, the first type being the groups of intermediate growth.
      
The goal of this paper is to initiate the systematic study of maximal 
subgroups of general ample groups.  Our results are mostly reminiscent of 
the classification of maximal subgroups in finite symmetric groups.  Let us
recall that all subgroups of a finite symmetric group are split into three 
classes: (i) \emph{intransitive subgroups} (those that leave invariant a 
nontrivial subset),  (ii) \emph{imprimitive subgroups} (transitive 
subgroups that leave invariant a nontrivial partition), and (iii) 
\emph{primitive subgroups} (the remaining ones).  Maximal subgroups in the 
first two classes are easy to describe.  Namely, intransitive maximal 
subgroups are stabilizers of certain subsets while imprimitive maximal 
subgroups are stabilizers of certain partitions.  Primitive maximal 
subgroups form a number of subclasses described by the O'Nan-Scott theorem.

When dealing with the ample groups, arbitrary subsets and partitions should 
be replaced by closed subsets and partitions into closed subsets.  
Similarly, transitivity is replaced with minimality (which is absence of 
nontrivial closed invariant subsets).  An ample group $\cG\subset\Homeo(X)$ 
acting minimally on the topological space $X$ is an analog of a finite 
symmetric group (in the case when $X$ is finite, $\cG$ is exactly the 
symmetric group).  So we split all subgroups of the group $\cG$ into three 
classes: (I) subgroups leaving invariant a nontrivial closed set, (II) 
subgroups acting minimally on $X$ but leaving invariant a nontrivial 
partition into closed sets, and (III) topologically primitive subgroups 
(the remaining ones).  Furthermore, we distinguish three subclasses in the 
first class: (I$_1$) subgroups leaving invariant a nonempty finite set, 
(I$_2$) subgroups leaving invariant an infinite, nowhere dense closed set, 
and (I$_3$) subgroups leaving invariant a nontrivial clopen set.  For 
general subgroups, these subclasses need not be disjoint or cover the 
entire class (I), but as far as the maximal subgroups are concerned, the 
subclasses (I$_1$), (I$_2$) and (I$_3$) form a partition of (I).  In the 
class (II), we distinguish one subclass (II$_0$) consisting of subgroups 
leaving invariant a nontrivial partition into clopen sets (or, 
equivalently, a finite partition into closed sets).

We proceed to the description of our main results.  The first of them 
provides a characterization of those maximal subgroups of ample groups that 
are stabilizers of closed sets.  Namely, they are exactly maximal subgroups 
in the class (I).

\begin{theorem}\label{main1}
Let $\cG\subset\Homeo(X)$ be an ample group that acts minimally on $X$.  
Suppose $H$ is a maximal subgroup of $\cG$ that does not act minimally on 
$X$.  Then $H=\St_{\cG}(Y)$, the stabilizer of some closed set $Y\subset X$ 
different from the empty set and $X$.  Moreover, the induced action of 
$\St_{\cG}(Y)$ on $Y$ is minimal.
\end{theorem}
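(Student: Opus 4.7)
The plan is to extract a minimal nonempty closed $H$-invariant subset $Y \subsetneq X$ and then let maximality of $H$ promote the containment $H \subseteq \St_\cG(Y)$ to an equality.

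Since $H$ does not act minimally on $X$, there exists a proper nonempty closed $H$-invariant subset $Z \subset X$. I would apply Zorn's lemma to the family of all nonempty closed $H$-invariant subsets of $Z$, ordered by reverse inclusion. A totally ordered chain in this family has nonempty intersection by compactness of $X$ together with the finite intersection property, and this intersection is again closed and $H$-invariant, hence an upper bound for the chain. Zorn then produces a set $Y$ that is minimal among nonempty closed $H$-invariant subsets of $Z$; in particular, $Y$ is a proper nonempty closed subset of $X$ on which $H$ acts minimally.

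Next I would observe that $H \subseteq \St_\cG(Y)$ by invariance, and that $\St_\cG(Y)$ is a proper subgroup of $\cG$: if $\St_\cG(Y)$ coincided with $\cG$, then $Y$ would be a proper nonempty closed $\cG$-invariant subset of $X$, contradicting minimality of the $\cG$-action. Maximality of $H$ now forces $H = \St_\cG(Y)$, and minimality of the induced action of $\St_\cG(Y) = H$ on $Y$ is built into the construction: any closed $H$-invariant subset of $Y$ lies in the same family and therefore equals $\emptyset$ or $Y$.

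The argument as sketched works for any subgroup of $\Homeo(X)$ acting minimally on $X$, so ampleness does not seem to be required for this particular theorem, and I do not foresee a serious obstacle. The only delicate point is ensuring that the minimal set produced by Zorn is a \emph{proper} subset of $X$, which is why I first pass to the auxiliary set $Z$ and apply Zorn inside $Z$, rather than minimizing directly among all nonempty closed $H$-invariant subsets of $X$ (where the minimum could a priori be $X$ itself).
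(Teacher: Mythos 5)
Your proof is correct, but it takes a genuinely different route from the paper's. The paper (Theorem \ref{maxsub-nonminimal}) starts from an \emph{arbitrary} proper nonempty closed $H$-invariant set $Y$, gets $H=\St_{\cG}(Y)$ at once from maximality and Lemma \ref{stab-closed1}, and then proves the minimality clause as a separate, stronger statement: for \emph{every} closed set $Y$ whose stabilizer is maximal, the induced action on $Y$ is minimal. That second step runs through Lemma \ref{stab-closed3}, which rests on Lemma \ref{stab-any2} and hence on the generalized $2$-cycle machinery --- this is where ampleness enters. You instead front-load the work with Zorn's lemma, choosing $Y$ minimal among nonempty closed $H$-invariant subsets of a proper invariant set $Z$, so that minimality of the induced action of $H=\St_{\cG}(Y)$ on $Y$ is automatic. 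Your observation that ampleness is not needed is correct for the theorem as stated (the step $\St_{\cG}(Y)\ne\cG$ needs only minimality of the $\cG$-action), and your argument is the more elementary one. What the paper's approach buys in exchange is the stronger ``furthermore'' clause applying to every closed set with maximal stabilizer --- a fact used later (e.g.\ in Theorem \ref{maxsub-not-clopen} via Lemma \ref{minim-nowhere-dense}) --- which your construction, tied to one particular $Y$ produced by Zorn, does not yield.
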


The next theorem already gives a continuum of maximal subgroups in any 
ample group acting without finite orbits on a Cantor set.  All those 
subgroups belong to the subclass (I$_1$) of the above classification.   

\begin{theorem}\label{main2}
Let $\cG\subset\Homeo(X)$ be an ample group that has no finite orbits.  
Suppose $Y$ is a finite nonempty subset of $X$.  Then the stabilizer 
$\St_{\cG}(Y)$ is a maximal subgroup of $\cG$ if and only if $Y$ is 
contained in a single orbit of $\cG$.
\end{theorem}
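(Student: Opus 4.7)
\emph{Proof plan.} For the ``only if'' direction, assume $Y$ meets at least two $\cG$-orbits, fix one such orbit $O_1$ with $Y\cap O_1\ne\emptyset$, and set $Y_1=Y\cap O_1$ and $Y_2=Y\setminus Y_1$, both nonempty. Since elements of $\cG$ preserve each $\cG$-orbit setwise, $\St_\cG(Y)\subseteq\St_\cG(Y_1)$, and it suffices to show that both inclusions $\St_\cG(Y)\subsetneq\St_\cG(Y_1)\subsetneq\cG$ are strict. Pick $y\in Y_2$; as the $\cG$-orbit of $y$ is infinite, there is $y'$ in that orbit with $y'\notin Y$. Take $g_0\in\cG$ with $g_0(y)=y'$ and a clopen neighborhood $V\ni y$ small enough that $V$ and $g_0(V)$ are disjoint and also disjoint from $Y_1$. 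The homeomorphism equal to $g_0$ on $V$, $g_0^{-1}$ on $g_0(V)$, and the identity elsewhere is locally in $\cG$, so by ampleness it lies in $\cG$; it fixes $Y_1$ pointwise but sends $y$ outside $Y$, providing the required element of $\St_\cG(Y_1)\setminus\St_\cG(Y)$. Since $Y_1$ is a finite subset of the infinite orbit $O_1$, any $g\in\cG$ sending some point of $Y_1$ outside $Y_1$ then witnesses $\St_\cG(Y_1)\subsetneq\cG$.

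For the ``if'' direction, let $k=|Y|$ and let $O$ be the common $\cG$-orbit of the points of $Y$. The coset space $\cG/\St_\cG(Y)$ is $\cG$-equivariantly identified with the orbit $\cG\cdot Y$ in the $k$-subsets of $X$, so maximality of $\St_\cG(Y)$ is equivalent to \emph{primitivity} of $\cG$ on $\cG\cdot Y$. The main ingredient is a multi-transitivity lemma: for every $n\ge 1$, $\cG$ acts $n$-transitively on each of its orbits. I prove this by induction on $n$. Given two $n$-tuples $(y_1,\dots,y_n)$ and $(y'_1,\dots,y'_n)$ of distinct points of one orbit, the induction hypothesis produces $g\in\cG$ matching the first $n-1$ coordinates, and then $z=g(y_n)$ and $y'_n$ are distinct points of the same orbit. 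Taking $g_0\in\cG$ with $g_0(z)=y'_n$ and a clopen neighborhood $V\ni z$ small enough that $V$ and $g_0(V)$ are disjoint from each other and from $\{y'_1,\dots,y'_{n-1}\}$, the same swap recipe produces $h\in\cG$ that fixes $y'_1,\dots,y'_{n-1}$ and sends $z$ to $y'_n$; then $hg$ matches the full tuples. A direct corollary is that $\cG\cdot Y=\binom{O}{k}$ and $\cG$ acts transitively on ordered pairs of $k$-subsets of $O$ with any prescribed intersection size.

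Primitivity now follows combinatorially. Let $\sim$ be a nontrivial $\cG$-invariant equivalence relation on $\binom{O}{k}$ and let $I\subseteq\{0,\dots,k-1\}$ consist of the values $|Y_1\cap Y_2|$ realized by distinct equivalent pairs $Y_1\sim Y_2$; by transitivity on pairs, $\sim$ is determined by $I$, and $I\ne\emptyset$. Applying transitivity of $\sim$ to triples with $|Y_1\cap Y_2|=|Y_2\cap Y_3|=i\in I$ forces every attainable value of $|Y_1\cap Y_3|$ to lie in $I\cup\{k\}$; a short count using that $O$ is infinite shows this attainable range is the integer interval $[\max(0,2i-k),k]$. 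Iterating, $\min I$ strictly decreases until $0\in I$, after which $I=\{0,\dots,k-1\}$ and $\sim$ is universal. The action is therefore primitive and $\St_\cG(Y)$ is maximal. The principal obstacle is the multi-transitivity lemma, where ampleness is essentially used; the combinatorics afterwards is elementary.
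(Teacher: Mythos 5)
Your proof is correct, and the ``if'' direction takes a genuinely different route from the paper's.  The paper deduces Theorem \ref{main2} from a more general result (Theorem \ref{maxsub-finite}) whose maximality part is a direct generation argument: for every $g\in\cG\setminus\St_{\cG}(Y)$ one shows $\langle\St_{\cG}(Y)\cup\{g\}\rangle=\cG$ by an elaborate multi-stage induction that manufactures, inside the generated subgroup, elements realizing prescribed swaps of finite subsets of the orbit (all resting on Lemma \ref{perm-mimic1}, the analogue of your clopen-neighborhood swap).  You instead pass to the coset action, identify it with the action of $\cG$ on $k$-element subsets of the infinite orbit $O$, prove $n$-transitivity of $\cG$ on $O$ for all $n$ (using the same generalized-$2$-cycle mechanism), and then run the classical combinatorial argument -- invariant equivalence relations on $\binom{O}{k}$ are determined by the set $I$ of realized intersection sizes, and the triple/intersection-size count forces $I=\{0,\dots,k-1\}$ -- to conclude primitivity.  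Your approach cleanly separates the group-theoretic input (high transitivity on infinite orbits) from elementary combinatorics and is arguably more transparent here; the paper's heavier direct argument buys the finer statements of Theorem \ref{maxsub-finite}, in particular the index-$2$ phenomenon for finite orbits split exactly in half, where your combinatorial step would fail (the ``complementation'' equivalence relation on $\binom{O}{k}$ with $|O|=2k$ is exactly the invariant relation your count rules out only when $O$ is infinite).  Your ``only if'' direction is essentially the paper's argument for part (iii), streamlined to exhibit the single intermediate subgroup $\St_{\cG}(Y\cap O_1)$ directly.
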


For a more general and more detailed result on the stabilizers of finite 
sets, see Theorem \ref{maxsub-finite} below.  Theorem \ref{main2} imposes 
very modest conditions on the ample group $\cG$ (Theorem 
\ref{maxsub-finite} imposes no conditions at all).  To treat the 
stabilizers of infinite closed sets, we need stronger assumptions.  Namely, 
$\cG$ has to act minimally on $X$ and to possess another property that we 
call \emph{Property \ref{prop-E}} (entanglement): for any clopen sets 
$U_1,U_2\subset X$ that overlap, the local subgroup (i.e., rigid  
stabilizer) $\cG_{U_1\cup U_2}$ is generated by its subgroups $\cG_{U_1}$ 
and $\cG_{U_2}$.

\begin{theorem}\label{main3}
Let $\cG\subset\Homeo(X)$ be an ample group that acts minimally on a Cantor 
set $X$ and has Property \ref{prop-E}.  Suppose $Y\subset X$ is an infinite 
closed set that is nowhere dense in $X$.  Then the stabilizer 
$\St_{\cG}(Y)$ is a maximal subgroup of $\cG$ if and only if it acts 
minimally when restricted to $Y$.
\end{theorem}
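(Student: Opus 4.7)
I plan to prove the two implications separately. The forward direction is a direct construction; the backward direction is a bootstrap argument whose core difficulty is invoking Property \ref{prop-E} across a collection of rigid stabilizers whose supports need not overlap.

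$(\Rightarrow)$ Argue by contrapositive. Suppose $Z$ is a nonempty closed $\St_\cG(Y)$-invariant proper subset of $Y$. Since $\cG$ acts minimally on $X$, the closed set $Z$ is not $\cG$-invariant, so $\St_\cG(Y)\subseteq\St_\cG(Z)\subsetneq\cG$; it suffices to exhibit $\tilde g\in\St_\cG(Z)\setminus\St_\cG(Y)$. Pick $y_0\in Y\setminus Z$ and a clopen $U\ni y_0$ with $U\cap Z=\emptyset$. By minimality of $\cG$ on $X$, some $g\in\cG$ has $g(y_0)\in X\setminus Y$; fix a clopen $U'\ni g(y_0)$ with $U'\subseteq U\setminus Y$ and then, by continuity, a clopen $N\ni y_0$ with $N\subseteq U\setminus U'$ and $g(N)\subseteq U'$. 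Define $\tilde g$ to equal $g$ on $N$, $g^{-1}$ on $g(N)$, and $\id$ elsewhere. Ampleness gives $\tilde g\in\cG$; its support lies in $N\cup g(N)\subseteq U$, so $\tilde g\in\cG_U\subseteq\St_\cG(Z)$; and $\tilde g(y_0)=g(y_0)\notin Y$ shows $\tilde g\notin\St_\cG(Y)$, contradicting maximality.

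$(\Leftarrow)$ Assume $\St_\cG(Y)$ acts minimally on $Y$, and let $H$ satisfy $\St_\cG(Y)\subsetneq H\subseteq\cG$. Pick $g\in H\setminus\St_\cG(Y)$; replacing $g$ by $g^{-1}$ if necessary, some $y_0\in Y$ has $g(y_0)\notin Y$. Choose a clopen $V\ni g(y_0)$ disjoint from $Y$ and set $W=g^{-1}(V)$; then $y_0\in W\cap Y$. Since $\cG_V$ fixes $Y$ pointwise, $\cG_V\subseteq\St_\cG(Y)\subseteq H$; conjugating by $g\in H$ yields $\cG_W\subseteq H$, and conjugating further by $\St_\cG(Y)\subseteq H$ yields $\cG_{s(W)}\subseteq H$ for every $s\in\St_\cG(Y)$.

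The main obstacle is now to upgrade these pieces to $\cG_X=\cG\subseteq H$, since Property \ref{prop-E} requires overlapping clopen sets, while the translates $s(W)$ need not overlap. By minimality of $\St_\cG(Y)$ on $Y$ applied to the nonempty relatively clopen set $W\cap Y$, the family $\{s(W):s\in\St_\cG(Y)\}$ covers $Y$, so by compactness $Y\subseteq\Omega_0:=s_1(W)\cup\cdots\cup s_n(W)$ for some $s_i$. My idea is to construct bridges between the $s_i(W)$ using clopen sets disjoint from $Y$, which exist because $Y$ is nowhere dense. Each $s_i(W)\setminus Y$ is a nonempty clopen set; pick clopen $B_i\subseteq s_i(W)\setminus Y$ and set $B=B_1\cup\cdots\cup B_n$. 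Then $B\cap Y=\emptyset$ gives $\cG_B\subseteq H$, while $B\cap s_i(W)\supseteq B_i\neq\emptyset$ for each $i$, so iterating Property \ref{prop-E} first across $s_1(W)$ and $B$, then absorbing $s_2(W),\dots,s_n(W)$ one by one through $B$, yields $\cG_{\Omega_0\cup B}=\cG_{\Omega_0}\subseteq H$. Finally, taking any nonempty clopen $B'\subseteq\Omega_0\setminus Y$ and setting $C=(X\setminus\Omega_0)\cup B'$ produces a clopen set disjoint from $Y$ with $\cG_C\subseteq H$ and $C\cap\Omega_0=B'\neq\emptyset$, so one more application of Property \ref{prop-E} yields $\cG_X\subseteq H$, whence $H=\cG$ and $\St_\cG(Y)$ is maximal.
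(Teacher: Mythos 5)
Your proof is correct, and the backward direction takes a genuinely different route from the paper's. For the forward direction you and the paper use the same mechanism (a generalized $2$-cycle supported in a small clopen set that moves a point of $Y\setminus Z$ off $Y$ while fixing $Z$ pointwise); the paper packages this as Theorem \ref{maxsub-nonminimal} together with Lemma \ref{stab-closed3}, while you argue directly. For the backward direction the paper first shows (Lemma \ref{same-orbits}) that the group generated by $\St_{\cG}(Y)$ and any $g$ outside it has the same orbits as $\cG$, hence acts minimally on $X$, and then invokes Proposition \ref{maxsub-partition} to sandwich that group between the stabilizers of a canonical clopen partition, which nowhere density of $Y$ forces to be trivial. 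You bypass both the orbit analysis and the partition machinery: you cover $Y$ by finitely many $\St_{\cG}(Y)$-translates of a single clopen set $W$ with $\cG_W\subset H$, then insert clopen bridges inside $X\setminus Y$ (whose rigid stabilizers automatically lie in $\Stp_{\cG}(Y)\subset\St_{\cG}(Y)$) to make the pieces overlap, so that Property \ref{prop-E} can be iterated all the way up to $\cG_X=\cG$. This is shorter and self-contained for this particular theorem; what the paper's detour buys is that Proposition \ref{maxsub-partition} is reused verbatim for Theorems \ref{maxsub-clopen}, \ref{maxsub-3-or-more} and \ref{maxsub-2-or-more}. Two small repairs: the sets $s_i(W)\setminus Y$ are open but not in general clopen (since $X\setminus Y$ is not closed), so you should instead say that each contains a nonempty clopen $B_i$ because $X$ is a Cantor set --- the rest of the argument is unaffected; and to conclude maximality you must also record that $\St_{\cG}(Y)\ne\cG$, which is immediate from minimality of $\cG$ (Lemma \ref{stab-closed1}).
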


Proposition \ref{plenty-of-examples} below shows that Theorem \ref{main3} 
provides uncountably many examples of maximal subgroups of the ample group 
$\cG$ (different from the stabilizers of finite sets).

If an ample group $\cG$ acts minimally on a Cantor set $X$ and the 
stabilizer $\St_{\cG}(Y)$ of a closed set $Y$ is a maximal subgroup of 
$\cG$, then $\St_{\cG}(Y)$ acts minimally on $Y$.  This implies that the 
set $Y$ is either finite, or infinite and nowhere dense, or clopen.  The 
first two cases are covered by Theorems \ref{main2} and \ref{main3}.  It 
remains to consider the stabilizers of clopen sets.

\begin{theorem}\label{main4}
Let $\cG\subset\Homeo(X)$ be an ample group that acts minimally on a Cantor 
set $X$ and has Property \ref{prop-E}.  Suppose $U$ is a clopen set 
different from the empty set and $X$.  Then $\St_{\cG}(U,X\setminus U)$, 
the stabilizer of the partition $X=U\sqcup(X\setminus U)$, is a maximal 
subgroup of $\cG$.  If $U$ cannot be mapped onto $X\setminus U$ by an 
element of $\cG$ then $\St_{\cG}(U)=\St_{\cG}(U,X\setminus U)$; otherwise 
$\St_{\cG}(U)$ is a subgroup of index $2$ in $\St_{\cG}(U,X\setminus U)$.
\end{theorem}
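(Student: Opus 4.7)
My proof plan begins with the index assertion. The inclusion $\St_\cG(U) \leq H := \St_\cG(U, X\setminus U)$ is immediate. An element of $H$ either stabilizes $U$ setwise (so it lies in $\St_\cG(U)$) or swaps $U$ with $X\setminus U$; since the composition of any two swap elements preserves $U$, the swap elements form at most one nontrivial coset of $\St_\cG(U)$ in $H$. Hence $[H : \St_\cG(U)]$ equals $1$ or $2$, with the value $2$ occurring precisely when some element of $\cG$ sends $U$ onto $X\setminus U$.

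For maximality, suppose $K \leq \cG$ strictly contains $H$. Fix $g \in K \setminus H$ and set $V := g(U)$, so $V \notin \{U, X\setminus U\}$. Since $K$ contains both $H$ and $g$, it contains $gHg^{-1} = \St_\cG(V, X\setminus V)$, and hence each of the rigid stabilizers $\cG_U$, $\cG_{X\setminus U}$, $\cG_V$, $\cG_{X\setminus V}$ lies in $K$. Consider the clopen partition $X = A \sqcup B \sqcup C \sqcup D$ with $A = U \cap V$, $B = V \setminus U$, $C = U \setminus V$, $D = X \setminus (U \cup V)$.

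In the generic case where $A, B, C, D$ are all nonempty, the sets $U$ and $V$ overlap in the sense of Property \ref{prop-E}, which yields $\cG_{U \cup V} = \langle \cG_U, \cG_V \rangle \subseteq K$. Similarly $X\setminus U$ and $X\setminus V$ overlap (intersection $D$, differences $B$ and $C$), giving $\cG_{X\setminus A} = \cG_{(X\setminus U)\cup(X\setminus V)} \subseteq K$. Finally $U \cup V$ and $X\setminus A$ also overlap (intersection $B\cup C$, differences $A$ and $D$) and their union is all of $X$, so one more application of Property \ref{prop-E} gives $\cG = \cG_X \subseteq K$, whence $K = \cG$.

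The main obstacle is reducing non-generic configurations to the generic case. Using $V \notin \{\emptyset, U, X\setminus U, X\}$, a short verification shows that either both $A$ and $B$ are nonempty or both $C$ and $D$ are nonempty. In the first situation, $V$ meets both $U$ and $X\setminus U$ nontrivially; the minimal action of $\cG$ on the Cantor set $X$ together with amplification (cutting and pasting homeomorphisms along clopens) produces a swap element $s \in \cG_V$ exchanging nonempty clopens $A_1 \subseteq A$ and $B_1 \subseteq B$ with $A_1 \subsetneq U$ and $B_1 \subsetneq X\setminus U$, and equal to the identity elsewhere. Then $s \in K \setminus H$, and the four-piece partition determined by $s(U)$ and $U$ has all parts nonempty, so the generic argument applied to $s$ in place of $g$ yields $K = \cG$. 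The second situation is symmetric, using $\cG_{X\setminus V}$ in place of $\cG_V$. Combined with the index computation, this completes the proof.
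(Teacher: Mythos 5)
Your argument is correct, but it takes a genuinely different route from the paper's. The paper proves maximality by feeding $H_g=\langle \St_{\cG}(U,X\setminus U)\cup\{g\}\rangle$ into a general structural result: Lemma \ref{same-orbits} shows $H_g$ acts minimally, Lemma \ref{direct-product-of-local-groups} shows it contains $\cG_U$ and $\cG_{X\setminus U}$, Proposition \ref{maxsub-partition} then squeezes $H_g$ between the individual and collective stabilizers of a unique clopen partition $\cP$, and Lemma \ref{local-groups-in-stabilizers} forces $\cP=\{X\}$. You instead work directly: conjugating by $g$ puts all four local groups $\cG_U,\cG_{X\setminus U},\cG_{V},\cG_{X\setminus V}$ into $K$, and three applications of Property \ref{prop-E} (to $U,V$; to $X\setminus U,X\setminus V$; and to $U\cup V,\ X\setminus(U\cap V)$) climb up to $\cG_X=\cG$ whenever the four pieces $U\cap V$, $V\setminus U$, $U\setminus V$, $X\setminus(U\cup V)$ are all nonempty; the degenerate configurations are reduced to this one by manufacturing a generalized $2$-cycle $s=\de_{A_1;h}$ supported in $V$ (or in $X\setminus V$) with $s(U)$ in general position. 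Your case analysis checks out: the dichotomy ``$A,B$ both nonempty or $C,D$ both nonempty'' does follow from $V\notin\{\emptyset,U,X\setminus U,X\}$, and the proper containments $A_1\subsetneq U$, $B_1\subsetneq X\setminus U$ needed to keep $s\notin\St_{\cG}(U,X\setminus U)$ and to make the new configuration generic can always be arranged because nonempty clopen subsets of a Cantor set are infinite. What your route buys is self-containedness and a sharper accounting of hypotheses (the generic case uses only Property \ref{prop-E}; minimality and the Cantor hypothesis enter only in the reduction step); what the paper's route buys is uniformity, since Proposition \ref{maxsub-partition} is reused verbatim for Theorems \ref{maxsub-not-clopen}, \ref{maxsub-3-or-more} and \ref{maxsub-2-or-more}. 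One small omission: maximality also requires $\St_{\cG}(U,X\setminus U)\ne\cG$, which you never verify; it is immediate from Lemma \ref{partition-stab} (or from the same $2$-cycle construction you use in the reduction), but it should be stated.
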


In the case when the clopen set $U$ can be mapped onto $X\setminus U$, the 
stabilizer $\St_{\cG}(U)$ is not a maximal subgroup of $\cG$.  However, 
$\St_{\cG}(U,X\setminus U)$ is the only group placed between $\St_{\cG}(U)$ 
and $\cG$ in the lattice of subgroups of $\cG$.

In addition to the stabilizers of clopen sets, Theorem \ref{main4} also
treats the stabilizers of partitions into two clopen sets.  Our next result 
covers the stabilizers of partitions into three or more clopen sets.

\begin{theorem}\label{main5}
Let $\cG\subset\Homeo(X)$ be an ample group that acts minimally on a Cantor 
set $X$ and has Property \ref{prop-E}.  Suppose $X=U_1\sqcup U_2\sqcup\dots 
\sqcup U_k$ is a partition of $X$ into at least three nonempty clopen 
sets.  Then the stabilizer $\St_{\cG}(U_1,U_2,\dots,U_k)$ of the partition  
is a maximal subgroup of $\cG$ if and only if its induced action on the set 
$\{U_1,U_2,\dots,U_k\}$ is transitive.
\end{theorem}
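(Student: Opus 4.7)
I first observe that once $H:=\St_{\cG}(U_1,\dots,U_k)$ is transitive on $\mathcal{P}:=\{U_1,\dots,U_k\}$, the induced action is automatically the full symmetric group $S_k$. Given $\sigma\in S_k$, transitivity supplies $\tau_i\in H$ with $\tau_i(U_i)=U_{\sigma(i)}$, and the piecewise map $h|_{U_i}=\tau_i|_{U_i}$ is a homeomorphism of $X$ (the $\{U_i\}$ and $\{U_{\sigma(i)}\}$ are clopen partitions of $X$). It agrees locally with elements of $\cG$, so ampleness places $h\in\cG$; clearly $h\in H$ and realizes $\sigma$ on $\mathcal{P}$.

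\textbf{Necessity.} If $H$ does not act transitively on $\mathcal{P}$, pick an $H$-orbit $\mathcal{O}\subsetneq\mathcal{P}$ and set $V=\bigsqcup_{U\in\mathcal{O}}U$. Then $H\subseteq\St_{\cG}(V)$, and minimality of $\cG$ forbids $V$ from being $\cG$-invariant, so $\St_{\cG}(V)\subsetneq\cG$. Since $k\ge 3$, one of $\mathcal{O}$ or $\mathcal{P}\setminus\mathcal{O}$ contains two distinct parts $U_a,U_b$. Using minimality pick $g_0\in\cG$ with $g_0(U_a)\cap U_b\neq\emptyset$, set $W_a=U_a\cap g_0^{-1}(U_b)$, $W_b=g_0(W_a)\subseteq U_b$, and by ampleness build $h\in\cG$ acting by $g_0$ on $W_a$, by $g_0^{-1}$ on $W_b$, and trivially elsewhere. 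Its support lies in $U_a\cup U_b$---hence inside $V$ or inside $X\setminus V$---so $h\in\St_{\cG}(V)$; but $h(W_a)=W_b\subseteq U_b$ means $h$ does not preserve $\mathcal{P}$, so $h\notin H$. Thus $H\subsetneq\St_{\cG}(V)\subsetneq\cG$ and $H$ is not maximal.

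\textbf{Sufficiency.} Assume $H$ is transitive, hence acts as $S_k$ on $\mathcal{P}$. Let $H\subsetneq K\subseteq\cG$ and fix $g\in K\setminus H$. If each $g(U_i)$ were contained in a single $U_{j(i)}$, bijectivity of $g$ would force $j$ to be a permutation and $g(U_i)=U_{j(i)}$, contradicting $g\notin H$; hence some $i$ admits distinct $j_1,j_2$ with $g(U_i)\cap U_{j_1}\neq\emptyset\neq g(U_i)\cap U_{j_2}$. From $\cG_{U_i}\subseteq H\subseteq K$ I get $\cG_{g(U_i)}=g\cG_{U_i}g^{-1}\subseteq K$. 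Two successive applications of Property \ref{prop-E}---first merging $\cG_{g(U_i)}$ with $\cG_{U_{j_1}}$ via overlap at $g(U_i)\cap U_{j_1}$, then adjoining $\cG_{U_{j_2}}$ via overlap at $g(U_i)\cap U_{j_2}$---yield $\cG_{g(U_i)\cup U_{j_1}\cup U_{j_2}}\subseteq K$, in particular $\cG_{U_{j_1}\cup U_{j_2}}\subseteq K$. Since $H$ acts 2-transitively on $\mathcal{P}$, conjugating by elements of $H$ puts $\cG_{U_a\cup U_b}$ in $K$ for every pair $\{a,b\}$, and a final chain of Property \ref{prop-E} applications (successive pairs $U_s\cup U_{s+1}$ and $U_{s+1}\cup U_{s+2}$ overlap in $U_{s+1}$) gives $\cG_{U_1\cup\cdots\cup U_k}=\cG_X=\cG\subseteq K$, forcing $K=\cG$.

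\textbf{Main obstacle.} The delicate part is the sufficiency direction, which rests on the interplay of Property \ref{prop-E} (which absorbs overlapping rigid stabilizers into $K$) and the ``transitive $\Rightarrow S_k$'' upgrade from the preliminary (which supplies 2-transitivity). Without the full $S_k$-action only one $H$-orbit of pairs would receive a copy of $\cG_{U_{j_1}\cup U_{j_2}}$, the induced ``edge graph'' on $\{1,\dots,k\}$ could be disconnected, and the final Property \ref{prop-E} chaining would stall inside a proper clopen union.
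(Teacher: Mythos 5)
Your argument is essentially correct, and the sufficiency direction takes a genuinely different and more economical route than the paper's.  The paper proves sufficiency by first showing that $H_g=\langle H\cup\{g\}\rangle$ has the same orbits as $\cG$ (via Lemma \ref{same-orbits} applied inside each part), then invoking Proposition \ref{maxsub-partition} to sandwich $H_g$ between the individual and collective stabilizers of a partition $\cP_g$ each of whose elements is a union of parts of $\cP$, and finally using generalized $2$-cycles that swap parts to force $\cP_g=\{X\}$.  You bypass all of that: the observation that transitivity on $\{U_1,\dots,U_k\}$ upgrades to the full symmetric group $\sS_k$ (hence $2$-transitivity on parts), combined with a direct chain of Property \ref{prop-E} absorptions starting from $\cG_{g(U_i)}=g\cG_{U_i}g^{-1}\subset K$, reaches $\cG_X=\cG\subset K$ with no minimality-of-$H_g$ and no compactness argument.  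This is cleaner for the statement at hand, though it buys less in context: Proposition \ref{maxsub-partition} is reused for Theorems \ref{maxsub-not-clopen}, \ref{maxsub-clopen} and \ref{maxsub-2-or-more}, whereas your shortcut is specific to this theorem.  Your correct identification of the disconnected-edge-graph obstruction, and its resolution via the $\sS_k$ upgrade, is exactly the point where bare transitivity would not suffice.

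One step needs patching.  In the necessity direction you conclude $h\notin H$ from $h(W_a)=W_b\subset U_b$, but if it happens that $g_0(U_a)=U_b$, then $W_a=U_a$ and $W_b=U_b$, and your $h$ is precisely the swap of $U_a$ and $U_b$ fixing everything else --- an element of $H$.  The fix is the one built into the paper's Lemma \ref{local-groups-in-stabilizers}: shrink $W_a$ to a proper nonempty clopen subset of $U_a$ (possible since $U_a$ is an infinite clopen subset of a Cantor set), so that $h$ fixes some point of $U_a$ while moving another into $U_b$; then $h(U_a)$ meets two distinct parts and cannot be an element of $\cP$.  With that adjustment your necessity argument is in substance the paper's, with the cited lemma unfolded into an explicit construction.
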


Our last result on maximal subgroups provides a characterization of those 
maximal subgroups of ample groups that are the stabilizers of partitions 
into clopen sets.

\begin{theorem}\label{main6}
Let $X$ be a Cantor set and $\cG\subset\Homeo(X)$ be an ample group with 
Property \ref{prop-E}.  Suppose $H$ is a maximal subgroup of $\cG$ that 
acts minimally on $X$ and contains a local subgroup $\cG_U$ for some 
nonempty clopen set $U$.  Then $H=\St_{\cG}(U_1,U_2,\dots,U_k)$ for some 
partition $X=U_1\sqcup U_2\sqcup\dots\sqcup U_k$ into nonempty clopen 
sets.  Moreover, the partition is unique, it consists of at least two sets, 
and the induced action of $H$ on the set $\{U_1,U_2,\dots,U_k\}$ is 
transitive.
\end{theorem}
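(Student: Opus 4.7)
The plan is to build the desired partition as the equivalence classes of a relation on $X$ induced by the collection $\mathcal{C}=\{V\subseteq X\text{ clopen}:\cG_V\subseteq H\}$. The hypothesis puts $U$ in $\mathcal{C}$, and since $\cG_{h(V)}=h\cG_V h^{-1}$, conjugation by elements of $H$ preserves $\mathcal{C}$. Property~\ref{prop-E} gives the key closure property: whenever $V_1,V_2\in\mathcal{C}$ intersect, also $V_1\cup V_2\in\mathcal{C}$, and iterating handles any chain $V_1,\dots,V_s\in\mathcal{C}$ with consecutive terms meeting. I would then define $x\sim y$ iff some element of $\mathcal{C}$ contains both; this is an equivalence relation on $X$.

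The next step is to show that $\sim$ has finitely many classes, each of which belongs to $\mathcal{C}$. Since $H$ acts minimally on $X$, the $H$-orbit of the nonempty clopen set $U$ covers $X$; compactness yields finitely many translates $h_1(U),\dots,h_n(U)$ already covering $X$. Each $h_i(U)\in\mathcal{C}$ sits entirely inside one $\sim$-class, so there are at most $n$ classes $W_1,\dots,W_m$, and each is the union of those $h_i(U)$ contained in it, hence clopen. By connecting the pieces of each $W_j$ by chains inside $\mathcal{C}$ (which exist because the pieces are $\sim$-related) and applying iterated Property~\ref{prop-E}, one obtains $W_j\in\mathcal{C}$, so $\cG_{W_j}\subseteq H$.

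Because $H$ preserves $\mathcal{C}$ it preserves $\sim$, giving $H\subseteq\St_{\cG}(W_1,\dots,W_m)$. If $m=1$ then $X\in\mathcal{C}$, which would force $\cG\subseteq H$, so $m\geq 2$. Maximality of $H$ then yields $H=\St_{\cG}(W_1,\dots,W_m)$ provided the containment $\St_{\cG}(W_1,\dots,W_m)\subsetneq\cG$ is strict; establishing this is the main technical obstacle, as it amounts to showing that an ample group acting minimally on a Cantor set cannot preserve a nontrivial finite clopen partition. I would prove it directly: take $x\in W_1$ and use minimality of $\cG$ to produce $g\in\cG$ with $g(x)\in W_2$, then shrink to a proper clopen $V\subseteq W_1$ containing $x$ with $g(V)\subseteq W_2$ (so $V\cap g(V)=\emptyset$); by ampleness, the involution equal to $g$ on $V$, to $g^{-1}$ on $g(V)$, and to the identity elsewhere lies in $\cG$, but it sends $W_1$ to $(W_1\setminus V)\cup g(V)$, which is not any $W_i$ (using that $W_1$, a nonempty clopen subset of a Cantor set, has no isolated points, so $V$ can be chosen proper). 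Transitivity of the induced $H$-action on $\{W_1,\dots,W_m\}$ then follows because the union of any $H$-orbit is clopen and $H$-invariant, hence equals $X$ by minimality of $H$.

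For uniqueness, suppose $H=\St_{\cG}(U_1,\dots,U_k)$ for some partition into nonempty clopen sets. Each $\cG_{U_i}\subseteq H$, so each $U_i$ lies in $\mathcal{C}$ and hence in some $W_j$. If some $W_j$ strictly contained more than one $U_i$, then $\cG_{W_j}$, which inherits both ampleness and minimality on $W_j$ from $\cG$, would preserve the nontrivial clopen partition of $W_j$ formed by the $U_i$ inside it, contradicting the involution argument applied inside $W_j$. Hence each $W_j$ coincides with a single $U_i$ and the two partitions match.
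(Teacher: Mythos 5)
Your argument is correct and follows essentially the same route as the paper: your collection $\mathcal{C}$ is the paper's $\cL_H$, your equivalence-class construction is a repackaging of the minimal-subcover argument in Proposition \ref{maxsub-partition} (both resting on Property \ref{prop-E} for closure under unions of overlapping sets and on minimality plus compactness to get a finite clopen partition preserved by $H$), and your generalized $2$-cycle arguments for properness of the stabilizer and for uniqueness are exactly those of Lemmas \ref{partition-stab} and \ref{local-groups-in-stabilizers}. The only difference is organizational: you re-derive these auxiliary facts inline rather than citing them.
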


Theorems \ref{main3}, \ref{main4}, \ref{main5} and \ref{main6} require the 
ample group to have Property \ref{prop-E}.  One class of ample groups with 
this property are the topological full groups associated with Cantor 
minimal systems.

\begin{theorem}\label{main7}
For any minimal homeomorphism $f$ of a Cantor set, the ample group 
$\F(\langle f\rangle)$ has Property \ref{prop-E}.
\end{theorem}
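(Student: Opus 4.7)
Set $V := U_1 \cap U_2$ (nonempty by the overlap hypothesis), $W_1 := U_1 \setminus U_2$, and $W_2 := U_2 \setminus U_1$.  Fix $g \in \cG_{U_1 \cup U_2}$; as an element of $\cG = \F(\langle f \rangle) = [[f]]$, it has the form $g(x) = f^{n(x)}(x)$ for a continuous $n \colon X \to \bZ$ of finite range, vanishing outside $U_1 \cup U_2$.  My plan is first to reduce the claim to the case of ``swap involutions'' $\sigma_{A, B} \in [[f]]$, where $A$ and $B = f^k(A)$ are disjoint clopen with $A \sqcup B \subset U_1 \cup U_2$, and $\sigma_{A,B}$ acts as $f^{\pm k}$ on $A \cup B$ and as the identity elsewhere.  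Every $g \in \cG_{U_1 \cup U_2}$ is a finite product of such swaps whose supports lie inside $\supp(g) \subset U_1 \cup U_2$: refine the level sets of $n$ by a Kakutani--Rokhlin partition adapted to $g$, so that $g$ becomes a permutation of finitely many tower-floors contained in $\supp(g)$, and then expand each cycle of that permutation into adjacent transpositions of floors.

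It therefore suffices to show that each swap $\sigma_{A, B}$ as above lies in $\langle \cG_{U_1}, \cG_{U_2} \rangle$.  Partition $A$ into the (at most) nine clopen pieces
\[A_{ab} := \{x \in A \colon x \in a,\ f^k(x) \in b\}, \qquad a, b \in \{W_1, V, W_2\}.\]
Because $W_1, V, W_2$ are pairwise disjoint, so are the supports $A_{ab} \sqcup f^k(A_{ab})$ as $(a,b)$ varies, and hence $\sigma_{A, B}$ is the commuting product of the corresponding sub-swaps $\sigma_{A_{ab}, f^k(A_{ab})}$.  The sub-swap lies in $\cG_{U_1}$ whenever $\{a, b\} \subset \{W_1, V\}$ and in $\cG_{U_2}$ whenever $\{a, b\} \subset \{V, W_2\}$; the only non-trivial case is $\{a, b\} = \{W_1, W_2\}$.

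For that case, set $A' := A_{W_1 W_2}$ and use minimality of $f$: the clopen sets $\{f^{-\ell}(V)\}_{\ell \in \bZ}$ cover $X$, so by compactness $A'$ admits a finite clopen partition $A' = \bigsqcup_j A^{(j)}$ with $f^{\ell_j}(A^{(j)}) \subset V$ for suitable $\ell_j \in \bZ$.  Since the supports $A^{(j)} \sqcup f^k(A^{(j)})$ are pairwise disjoint, it suffices to handle each $j$ separately.  Fix $j$ and set $C_j := f^{\ell_j}(A^{(j)}) \subset V$, $B^{(j)} := f^k(A^{(j)}) \subset W_2$; the three sets $A^{(j)}, C_j, B^{(j)}$ are pairwise disjoint.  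Let $\tau_1$ be the swap involution exchanging $A^{(j)}$ and $C_j$ via $f^{\ell_j}$; its support $A^{(j)} \sqcup C_j$ lies in $U_1$, so $\tau_1 \in \cG_{U_1}$.  Let $\tau_2$ be the swap involution exchanging $C_j$ and $B^{(j)}$ via $f^{k - \ell_j}$; its support lies in $U_2$, so $\tau_2 \in \cG_{U_2}$.  A direct trace on the three atoms confirms that $\tau_1 \tau_2 \tau_1$ swaps $A^{(j)} \leftrightarrow B^{(j)}$ and fixes $C_j$, hence coincides with $\sigma_{A^{(j)}, B^{(j)}}$.  Multiplying over $j$ places $\sigma_{A', f^k(A')}$, and therefore $\sigma_{A, B}$, in $\langle \cG_{U_1}, \cG_{U_2} \rangle$.

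The main obstacle I anticipate is the reduction in the first paragraph: writing a general $g \in \cG_{U_1 \cup U_2}$ as a product of swap involutions whose supports themselves remain inside $U_1 \cup U_2$.  One must choose a Kakutani--Rokhlin refinement whose tower-floors all sit inside $\supp(g)$, so that the adjacent transpositions of floors produced by the cycle decomposition stay inside $U_1 \cup U_2$; this combines minimality of $f$ with the finite-range structure of the cocycle $n$ and is standard but requires care.  After this reduction, the ``factor through the intersection'' argument of the second and third paragraphs is elementary and uses only that $V$ is a nonempty clopen set meeting every $f$-orbit.
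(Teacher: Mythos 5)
Your second and third paragraphs are essentially the paper's Proposition \ref{E-weakened}: the same decomposition of a generalized $2$-cycle into nine pieces according to where $U_1\setminus U_2$, $U_1\cap U_2$ and $U_2\setminus U_1$ are sent, and the same conjugation through the intersection using minimality. That part is correct. The gap is exactly where you anticipated it, in the first paragraph, and it is not a matter of ``care'': the reduction is false. Every swap involution $\sigma_{A,B}$ has order $2$, hence lies in the kernel of the index map $I:\F(\langle f\rangle)\to\bZ$ of Giordano--Putnam--Skau (Theorem \ref{index1}), so any product of swaps has index $0$. But the local group $\cG_{U_1\cup U_2}$ contains elements of nonzero index: the first-return map $f_{U_1\cup U_2}$ (equal to $f^{p(x)}(x)$ on $U_1\cup U_2$ and the identity elsewhere) belongs to $[[f]]$, is supported in $U_1\cup U_2$, and satisfies $I(f_{U_1\cup U_2})=1$. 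So it is not a product of swaps, no matter how the Kakutani--Rokhlin partition is refined. The concrete failure in your tower picture is holonomy: $g$ does permute the floors of a fine enough partition, but when you traverse a cycle of floors the composition around the cycle is a nontrivial power of $f$ mapping a floor to itself, not the identity, so the cycle does not factor into transpositions of floors. The subgroup generated by your swaps is $\sS(\cG_{U_1\cup U_2})=\ker(I)\cap\cG_{U_1\cup U_2}$, which has quotient $\bZ$ in $\cG_{U_1\cup U_2}$.

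Closing this gap is the actual content of the theorem and requires nontrivial input. The paper does it as follows: by Matui's theorem (Theorem \ref{index2}) every element of $\ker(I)$ is a product of two finite-order elements, whence $\ker(I)=\sS(\langle f\rangle)$; the first-return construction (Lemma \ref{1st-return-map}) produces an index-$1$ element $f_1$ supported in $U_1$ \emph{alone}; and one checks $\sS(\cG_W)=\ker(I)\cap\cG_W$ for every nonempty clopen $W$ (Proposition \ref{facts-on-cyclic-amplified}). Then any $g\in\cG_{U_1\cup U_2}$ factors as $g=f_1^k h$ with $f_1^k\in\cG_{U_1}$ and $h\in\sS(\cG_{U_1\cup U_2})$, and your paragraphs two and three finish off $h$. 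Without some substitute for this index-map bookkeeping, your argument only proves the weaker statement about generalized symmetric groups.
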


The topological full groups of Cantor minimal system are part of a larger 
class of ample groups with Property \ref{prop-E}.  For example, the cyclic 
group $\langle f\rangle$ in Theorem \ref{main7} can be replaced by any 
countable elementary amenable group acting minimally and freely on the 
Cantor set.  Another, quite different example of an ample group with 
Property \ref{prop-E} is Thompson's group $V$ as well as its various 
generalizations, for example, the Higman-Thompson groups $V_{n,r}$ (see, 
e.g., \cite{BCMNO}) and the Brin-Thompson groups $nV$ (see \cite{Brin}).  
Yet another class of ample groups with Property \ref{prop-E} are the 
(topological) \emph{full groups} of minimal, purely infinite \'etale 
groupoids considered by Matui in \cite{Matui-pureinf1,Matui-pureinf2}.  See 
Section \ref{property-E} for more details.

It would be interesting to find more examples of the ample groups with 
Property \ref{prop-E}.  More generally, it would be interesting to find out 
which of our results extend to other classes of micro-supported groups (in 
particular, the classes mentioned above).

For the ample groups acting minimally on a Cantor set and having Property 
\ref{prop-E}, our results describe all maximal subgroups in the subclasses 
(I$_1$), (I$_2$), (I$_3$) and (II$_0$).  Presumably, the other maximal 
subgroups in the class (II) are also stabilizers of partitions into closed 
sets.  In the case of finite symmetric groups, there is a large variety of 
primitive maximal subgroups.  Maximal subgroups of Thompson's group $V$ 
found in \cite{BBQS22} include a continuum of topologically primitive 
subgroups.  As for the ample groups associated with Cantor minimal systems 
(or, more generally, obtained by amplifying countable amenable groups 
acting minimally on the Cantor set), no interesting examples of such 
subgroups are found so far.  This leads us to formulate the following bold 
conjecture.

\begin{conjecture}\label{bold-conjecture}
Let $f$ be a minimal homeomorphisms of a Cantor set $X$.  Suppose $H$ is a 
maximal subgroup of the ample group $\cG=\F(\langle f\rangle)$.  Then 
exactly one of the following statements holds true.
\begin{itemize}
\item[(i)]
$H$ is the stabilizer of a closed set $Y\subset X$ different from $X$ and 
the empty set.  Moreover, the induced action of $H$ on $Y$ is minimal.  
Moreover, the set $Y$ cannot be mapped onto $X\setminus Y$ by elements of 
the group $\cG$.
\item[(ii)]
$H$ is the stabilizer of a partition 
$\mathcal{P}=\{Y_\alpha\}_{\alpha\in\mathcal{A}}$ of $X$ into closed sets 
different from the partition into points and the trivial partition 
$\{X\}$.  Moreover, the partition $\mathcal{P}$ is unique and the induced 
action of $H$ on the factor space $X/\mathcal{P}$ is minimal.
\item[(iii)]
$H$ is a normal subgroup of finite prime index in $\cG$.  Moreover, $H$ 
contains the commutator subgroup of $\cG$.
\end{itemize}
\end{conjecture}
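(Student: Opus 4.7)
The plan is to combine the three-class decomposition of subgroups of $\cG$ (classes (I), (II), (III) from the introduction) with Theorems \ref{main1}--\ref{main7}, and then to treat the topologically primitive case separately via the normal structure of $\cG=\F(\langle f\rangle)$.  Theorem \ref{main7} provides Property \ref{prop-E} for free, so all of Theorems \ref{main1}--\ref{main6} are available.

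First I would handle a maximal $H$ in class (I).  Theorem \ref{main1} immediately produces a proper nonempty closed set $Y\subset X$ with $H=\St_\cG(Y)$ on which $H$ acts minimally, giving the first two sentences of conclusion~(i).  The set $Y$ must then be finite, infinite and nowhere dense, or clopen; Theorems \ref{main2}, \ref{main3} and \ref{main4} respectively confirm maximality in each case.  The non-mappability assertion in (i) follows from maximality itself: if some $g\in\cG$ carried $Y$ onto $X\setminus Y$, then $\St_\cG(Y,X\setminus Y)$ would be a proper subgroup strictly between $\St_\cG(Y)$ and $\cG$ (it contains $g\notin H$), contradicting maximality of $H$.

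Next, suppose $H$ belongs to class (II), so $H$ acts minimally on $X$ but preserves a nontrivial partition $\cP$ of $X$ into closed sets.  If $\cP$ can be taken to consist of clopen sets (subclass (II$_0$)), then $H$ contains the local group $\cG_U$ for some clopen $U$ appearing in $\cP$; Theorem \ref{main6} identifies $H$ as the stabilizer of a unique clopen partition, and Theorem \ref{main5} (or \ref{main4} when there are two blocks) forces the induced permutation action to be transitive, that is, minimal on the finite factor space, yielding~(ii).  The delicate step is when $H$ lies in (II) but not in (II$_0$): one must still show that $H$ equals the stabilizer of its coarsest invariant partition into closed sets with minimal induced action.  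I would build this partition by intersecting all $H$-invariant closed equivalence relations on $X$, then use Property \ref{prop-E} together with the simplicity of the commutator subgroup established by Matui \cite{Matui} to exclude any element of $\cG$ that preserves the partition but lies outside $H$.

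The main obstacle, and the reason the statement is only a conjecture, is class (III) of topologically primitive maximal subgroups.  Since Matui showed that the commutator subgroup $\cG'$ is simple and $\cG/\cG'$ is abelian (with a surjection onto $\bZ$), any maximal $H$ containing $\cG'$ automatically has prime index and satisfies (iii).  So the whole difficulty is to exclude topologically primitive maximal $H$ with $H\not\supset\cG'$.  For such an $H$ one has $H\cG'=\cG$, and I would attempt to show that $H\cap\cG'$ is normal in $\cG'$, which by simplicity would force $H\cap\cG'=\{e\}$ and express $\cG$ as a semidirect product $\cG'\rtimes(\cG/\cG')$.  Such a splitting should then be refuted by the micro-supported structure: the factor complement would have to act on $X$ by elements commuting in a controlled way with all of $\cG'$, and I would use Property \ref{prop-E} to construct a nontrivial $H$-invariant closed set or partition, contradicting topological primitivity.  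The genuinely hard step is upgrading $H$-invariance of $H\cap\cG'$ to normality in $\cG'$; a full proof would presumably require new structural information about $\cG$ beyond Theorems \ref{main1}--\ref{main7}, which is why the statement is posed as a conjecture.
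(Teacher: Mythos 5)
This statement is labeled a \emph{conjecture} in the paper, and the paper offers no proof of it; there is therefore nothing to compare your argument against, and the honest verdict is that your proposal does not close the conjecture either. The parts you do establish are correctly assembled from the paper's results: for a maximal $H$ not acting minimally on $X$, Theorem \ref{main1} gives $H=\St_{\cG}(Y)$ with minimal induced action on $Y$, and the non-mappability clause in (i) does follow from maximality (if $g(Y)=X\setminus Y$ for some $g\in\cG$ then $Y$ is clopen, and Theorem \ref{main4} makes $\St_{\cG}(Y)$ an index-$2$ subgroup of the proper subgroup $\St_{\cG}(Y,X\setminus Y)$, contradicting maximality). Likewise, a maximal $H$ preserving a nontrivial \emph{clopen} partition must equal its stabilizer by Lemma \ref{partition-stab}, and Theorems \ref{main5} and \ref{main6} then give transitivity on the blocks. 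The case $H\supset\cG'$ reduces to maximal subgroups of the abelianization and yields (iii).

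The genuine gaps are exactly the two places where you wave at ``new structural information.'' First, for $H$ in class (II) but not (II$_0$): intersecting all $H$-invariant closed equivalence relations need not produce a nontrivial closed partition with minimal quotient action, and none of Theorems \ref{main1}--\ref{main7} applies, since Theorem \ref{main6} requires $H$ to contain a local group $\cG_U$ --- precisely what fails outside (II$_0$). The paper itself flags this (``Presumably, the other maximal subgroups in the class (II) are also stabilizers of partitions into closed sets''), so no amount of citing Property \ref{prop-E} or Matui's simplicity theorem substitutes for an argument here. Second, in class (III) your reduction is flawed at the step you yourself identify: $H\cap\cG'$ is normal in $H$ (because $\cG'\trianglelefteq\cG$) but there is no reason for it to be normal in $\cG'$, so simplicity of $\cG'$ cannot be invoked to force $H\cap\cG'=\{e\}$; and even granting a splitting $\cG=\cG'\rtimes(\cG/\cG')$, deriving a contradiction with topological primitivity is not carried out. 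Since these two cases are the entire content of the conjecture beyond what Theorems \ref{main1}--\ref{main6} already prove, the proposal should be regarded as a correct organization of the known partial results rather than a proof.
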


The paper is organized as follows.  In Section \ref{sect-amplify} we 
describe the concept of amplification and define the ample groups.  In 
Section \ref{sect-stab} we define various stabilizers associated to a group 
of homeomorphisms, including the local subgroups of ample groups.  In 
Section \ref{subgroups} we introduce generalized permutations and, in 
particular, generalized $2$-cycles.  The generalized $2$-cycles are used in 
most constructions in our paper.  In Section \ref{prop} we introduce a 
number of useful properties that groups of homeomorphisms can have, 
including Property \ref{prop-E}.  Section \ref{maxsub} is devoted to the 
study of maximal subgroups of ample groups.  In that section we prove 
slightly generalized versions of Theorems \ref{main1}, \ref{main2}, 
\ref{main3}, \ref{main4}, \ref{main5} and \ref{main6} (respectively 
Theorems \ref{maxsub-nonminimal}, \ref{maxsub-finite2}, 
\ref{maxsub-not-clopen}, \ref{maxsub-clopen}, \ref{maxsub-3-or-more} and 
\ref{maxsub-2-or-more}).  In Section \ref{nowhere-dense} we describe a 
construction of closed, nowhere dense sets and groups of homeomorphisms 
acting on them that allows to provide a wealth of examples for Theorem 
\ref{main3}.  In Section \ref{property-E} we first discuss Property 
\ref{prop-E} in detail, then present examples of ample groups with that 
property.  In particular, we prove Theorem \ref{main7} (Theorem 
\ref{E-for-cyclic-amplified}).

\medskip

\emph{Acknowledgements}.
The authors are grateful to the anonymous referee whose numerous comments 
and suggestions helped to improve the exposition and sharpen the results of 
the paper substantially.

The first author is supported by the Humboldt Foundation and expresses his 
gratitude to the University of Bielefeld.  Also, he is supported by the 
Travel Support for Mathematicians grant MP-TSM-00002045 from the Simons 
Foundation.

\section{Amplification}\label{sect-amplify}

Let $X$ be a topological space.  We denote by $C(X,X)$ the set of all 
continuous maps $f:X\to X$.  This set is a semigroup (and a monoid) with 
respect to the composition of maps.  The identity function $\id_X:X\to X$ 
is the identity element.  Homeomorphisms of $X$ are invertible elements of 
$C(X,X)$.  The set $\Homeo(X)$ of all homeomorphisms is a group.

We assume that the topological space $X$ is compact, metrizable, and 
totally disconnected.  These conditions imply that the topology is 
generated by \emph{clopen} (that is, both closed and open) sets.  The main 
example is a Cantor set, but a finite set with the discrete topology is an 
example as well.  If $Y$ is a nonempty closed subset of $X$, then $Y$ as 
the topological space with the induced topology also satisfies those 
conditions.

Several properties of the topological space $X$ will be used repeatedly 
throughout the paper.  First of all, any open neighborhood of a point $x\in 
X$ contains a clopen neighborhood of the same point $x$.  Secondly, for any 
distinct points $x,y\in X$ there exists a clopen neighborhood of $x$ that 
does not contain $y$.  It easily follows by induction that for any finite 
number of distinct points in $X$ we can choose pairwise disjoint clopen 
neighborhoods.

\begin{proposition}\label{amp-local}
Given a subset $S\subset C(X,X)$ and an element $f\in C(X,X)$, the 
following conditions are equivalent:
\begin{itemize}
\item[(i)]
$f$ is \textbf{locally an element of} $S$, which means that for any point 
$x\in X$ there exists an open neighborhood $U_x$ of $x$ and an element 
$g_x\in S$ such that $f$ coincides with $g_x$ on $U_x$: 
$f|_{U_x}=g_x|_{U_x}$;

\item[(ii)]
$f$ is \textbf{piecewise an element of} $S$, which means that there exist 
clopen sets $V_1,\dots,V_k$ forming a partition of $X$ and elements 
$h_1,h_2,\dots,h_k\in S$ such that $f|_{V_i}=h_i|_{V_i}$ for $1\le i\le k$.
\end{itemize}
\end{proposition}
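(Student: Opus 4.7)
I would prove the proposition as a cycle (ii)$\Rightarrow$(i)$\Rightarrow$(iii)$\Rightarrow$(ii), since (ii)$\Rightarrow$(i) is essentially free (take $U_x=V_i$ for the unique piece containing $x$, with $g_x=h_i$), and the other two arrows package the topological work once each.

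For (i)$\Rightarrow$(iii), the plan is to use total disconnectedness to upgrade each neighborhood $U_x$ to a clopen one. Since the clopen subsets of $X$ form a base for the topology, for every $x\in X$ I can pick a clopen $W_x\subset U_x$ containing $x$, and I still have $f|_{W_x}=g_x|_{W_x}$. The family $\{W_x\}_{x\in X}$ is an open cover of the compact space $X$, so a finite subcover $W_{x_1},\dots,W_{x_n}$ suffices. Now define $\phi:X\to S$ by $\phi(x)=g_{x_i}$ where $i$ is the least index with $x\in W_{x_i}$. This $\phi$ is locally constant because, near any $x\in X$, the set $W_{x_i}\setminus(W_{x_1}\cup\dots\cup W_{x_{i-1}})$ is clopen and contains $x$ when $i$ is the first index where $x$ appears. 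And by construction $(\phi(x))(x)=g_{x_i}(x)=f(x)$ for every $x$.

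For (iii)$\Rightarrow$(ii), the idea is that a locally constant function on a compact space takes only finitely many values, and its level sets furnish the desired clopen partition. Concretely, for each $s$ in the image of $\phi$, the preimage $\phi^{-1}(s)$ is open (by local constancy) and these preimages partition $X$. Compactness forces this open partition to be finite, and hence each class is also closed as the complement of the union of the remaining (finitely many) open classes. Enumerate the nonempty classes as $V_1,\dots,V_k$ with corresponding values $h_1,\dots,h_k\in S$; then $f|_{V_i}=h_i|_{V_i}$ follows directly from the formula $f(x)=(\phi(x))(x)$.

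The only place where the hypotheses on $X$ enter in a nontrivial way is the step (i)$\Rightarrow$(iii), where I use both total disconnectedness (to shrink open neighborhoods to clopen ones) and compactness (to extract a finite subcover); these together are what allow a purely local description of $f$ to be reassembled into a finite piecewise one. I do not anticipate any subtle obstacle — the main point to be careful about is verifying that $\phi$ is genuinely locally constant in the ``first index'' construction, which uses that finite intersections and differences of clopen sets remain clopen.
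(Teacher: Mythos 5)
Your proof is correct and uses essentially the same argument as the paper: the only substantive step is passing from the local description to a finite clopen decomposition by shrinking neighborhoods to clopen sets, extracting a finite subcover, and disjointifying by the ``first index'' rule, which is exactly the paper's construction $V_i=U_{x_i}\setminus(U_{x_1}\cup\dots\cup U_{x_{i-1}})$ placed in the arrow (i)$\Rightarrow$(iii) rather than (i)$\Rightarrow$(ii). Your cycle runs in the opposite direction from the paper's (ii)$\Rightarrow$(iii)$\Rightarrow$(i)$\Rightarrow$(ii), at the small extra cost of noting that a locally constant function on a compact space has finite image, but the substance is identical.
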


\begin{proof}
The implication (ii)$\implies$(i) is trivial.  Let us prove that 
(i)$\implies$(ii).  Assume condition (i) holds.  Without loss of 
generality, we may further assume that the neighborhoods $U_x$, $x\in X$ 
are clopen.  The sets $U_x$ form an open cover of the compact space $X$.  
Hence there are finitely many points $x_1,x_2,\dots,x_k\in X$ such that the 
sets $U_{x_i}$, $1\le i\le k$ form a subcover.  Now let $V_1=U_{x_1}$ and 
$V_i=U_{x_i}\setminus (U_{x_1}\cup\dots\cup U_{x_{i-1}})$ for 
$i=2,3,\dots,k$.  Then the sets $V_1,V_2,\dots,V_k$ form a partition of 
$X$.  Since each $U_{x_i}$, $1\le i\le k$ is clopen, it follows that the 
sets $V_i$, $1\le i\le k$ are clopen as well.  Let $h_i=g_{x_i}$ for $1\le 
i\le k$.  Clearly, $f$ coincides with $h_i$ on $V_i$ for $1\le i\le k$.  
Thus condition (ii) holds.
\end{proof}

\begin{definition}
Given a set $S\subset C(X,X)$, let $\hF(S)$ denote the set of all 
continuous maps in $C(X,X)$ that are locally elements of $S$ (that is, 
satisfy the two conditions in Proposition \ref{amp-local}).  Given a set 
$S\subset\Homeo(X)$, let $\F(S)=\hF(S)\cap\Homeo(X)$.
\end{definition}

Clearly, $S\subset\hF(S)$ for any $S\subset C(X,X)$.  If 
$S\subset\Homeo(X)$, then $S\subset\F(S)\subset\hF(S)$.

\begin{lemma}\label{amp-amp}
$\hF(\hF(S))=\hF(S)$ for any $S\subset C(X,X)$.  $\F(\F(S))=\F(S)$ for any 
$S\subset\Homeo(X)$.
\end{lemma}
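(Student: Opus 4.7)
The plan is to prove the two equalities together, starting with the first one, since the second will follow by intersecting with $\Homeo(X)$. The inclusion $\hF(S)\subset\hF(\hF(S))$ is immediate from the observation (already noted in the text) that $T\subset\hF(T)$ for every $T\subset C(X,X)$. So the only work is in the reverse inclusion $\hF(\hF(S))\subset\hF(S)$.

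To prove it, I would take an arbitrary $f\in\hF(\hF(S))$ and apply characterization (ii) from Proposition \ref{amp-local} twice, refining partitions in the standard way. Concretely, there is a clopen partition $V_1,\dots,V_k$ of $X$ and elements $h_1,\dots,h_k\in\hF(S)$ with $f|_{V_i}=h_i|_{V_i}$. In turn, each $h_i$ is piecewise an element of $S$: there is a clopen partition $W_{i,1},\dots,W_{i,m_i}$ of $X$ and elements $g_{i,1},\dots,g_{i,m_i}\in S$ with $h_i|_{W_{i,j}}=g_{i,j}|_{W_{i,j}}$. The common refinement $\{V_i\cap W_{i,j}:1\le i\le k,\ 1\le j\le m_i\}$ consists of clopen sets and partitions $X$ (after discarding the empty pieces), and on each nonempty piece $V_i\cap W_{i,j}$ we have $f=h_i=g_{i,j}$. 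By condition (ii) of Proposition \ref{amp-local}, $f\in\hF(S)$.

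For the second equality, I would simply unwind the definitions. Since $\F(T)=\hF(T)\cap\Homeo(X)$ for any $T\subset\Homeo(X)$, and since $\F(S)\subset\hF(S)$, the first part gives
\[
\F(\F(S))=\hF(\F(S))\cap\Homeo(X)\subset\hF(\hF(S))\cap\Homeo(X)=\hF(S)\cap\Homeo(X)=\F(S).
\]
The reverse inclusion $\F(S)\subset\F(\F(S))$ is the general fact $T\subset\F(T)$ applied to $T=\F(S)$.

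There is no real obstacle here: the whole argument is the bookkeeping of refining one clopen partition by another, which works precisely because the ambient space is totally disconnected and compact (this was already used in the proof of (i)$\Rightarrow$(ii) of Proposition \ref{amp-local}, and no further topological input is needed). The only minor point to be careful about is to allow the refined pieces $V_i\cap W_{i,j}$ to be empty and simply ignore those, so that the refinement genuinely is a clopen partition in the sense required by Proposition \ref{amp-local}(ii).
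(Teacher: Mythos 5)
Your proof is correct, but it takes a slightly different route from the paper's for the main inclusion $\hF(\hF(S))\subset\hF(S)$. You invoke characterization (ii) of Proposition \ref{amp-local} (``piecewise an element'') twice and form the common refinement $\{V_i\cap W_{i,j}\}$ of the two clopen partitions; the paper instead works entirely with characterization (i) (``locally an element''): given $x\in X$, it picks a neighborhood $U$ on which $f=g$ for some $g\in\hF(S)$, then a neighborhood $W$ of the same point on which $g=h$ for some $h\in S$, and concludes $f=h$ on $U\cap W$. The two arguments buy slightly different things. The paper's pointwise version is shorter and needs no bookkeeping about empty pieces; moreover, it uses nothing beyond the definition of ``locally,'' so the compactness of $X$ enters only through the equivalence (i)$\Leftrightarrow$(ii), not through the lemma's proof itself. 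Your version is more constructive in that it exhibits an explicit finite clopen partition witnessing $f\in\hF(S)$, at the cost of leaning on the (i)$\Rightarrow$(ii) direction of Proposition \ref{amp-local}, which is where the compactness argument actually lives. You are right to flag the empty intersections $V_i\cap W_{i,j}$; discarding them is all that is needed. Your treatment of the second equality, $\F(\F(S))=\F(S)$, is essentially identical to the paper's, intersecting with $\Homeo(X)$ and using $\F(S)\subset\hF(S)$ together with the first part.
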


\begin{proof}
As the inclusion $\hF(S)\subset\hF(\hF(S))$ is obvious, we need to prove 
that $\hF(\hF(S))\subset\hF(S)$.  Suppose $f\in\hF(\hF(S))$ and consider an 
arbitrary point $x\in X$.  Since $f$ is locally an element of $\hF(S)$, 
there exists an open neighborhood $U$ of $x$ and a map $g\in\hF(S)$ such 
that $f|_U=g|_U$.  Since $g$ is locally an element of $S$, there exists an 
open neighborhood $W$ of $x$ and a map $h\in S$ such that $g|_W=h|_W$.  
Then $f$ coincides with $h$ on $U\cap W$, which is an open neighborhood of 
$x$.  We conclude that $f$ is locally an element of $S$.

In the case $S\subset\Homeo(X)$, the inclusion $\F(S)\subset\F(\F(S))$ is 
obvious.  Since $\F(S)\subset\hF(S)$, it follows that 
$\hF(\F(S))\subset\hF(\hF(S))$.  By the above, $\hF(\hF(S))=\hF(S)$.  
Therefore $\F(\F(S))=\hF(\F(S))\cap\Homeo(X)\subset\hF(S)\cap\Homeo(X) 
=\F(S)$.  Thus $\F(\F(S))=\F(S)$.
\end{proof}

\begin{lemma}\label{ample-semigroup}
If $S\subset C(X,X)$ is a semigroup, then $\hF(S)$ is also a semigroup.  If
$S\subset\Homeo(X)$ is a group, then $\F(S)$ is also a group.
\end{lemma}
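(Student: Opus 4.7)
The plan is to verify closure under composition directly from condition (i) of Proposition \ref{amp-local}, exploiting the continuity of the maps to pull back the neighborhood at the image point.

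Take $f,g\in\hF(S)$ and fix an arbitrary $x\in X$; I want to produce an open neighborhood of $x$ on which $f\circ g$ coincides with a single element of $S$. First, since $g$ is locally an element of $S$, there exist an open neighborhood $U$ of $x$ and $h_1\in S$ with $g|_U=h_1|_U$. Second, since $f$ is locally an element of $S$, the point $g(x)$ has an open neighborhood $V$ and there is $h_2\in S$ with $f|_V=h_2|_V$. Now set $W=U\cap g^{-1}(V)$; this is open (by continuity of $g$) and contains $x$. For every $y\in W$ we have $g(y)=h_1(y)$ and $g(y)\in V$, hence
\[
(f\circ g)(y)=f(g(y))=h_2(g(y))=h_2(h_1(y))=(h_2\circ h_1)(y).
\]
Because $S$ is a semigroup, $h_2\circ h_1\in S$, so $f\circ g$ is locally an element of $S$ at $x$. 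Since $x$ was arbitrary, $f\circ g\in\hF(S)$.

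For the second statement, simply observe that $\F(S)=\hF(S)\cap\Homeo(X)$ by definition. If $f,g\in\F(S)$, then $f\circ g\in\hF(S)$ by the previous paragraph, and $f\circ g\in\Homeo(X)$ since $\Homeo(X)$ is a group; hence $f\circ g\in\F(S)$.

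No step is genuinely difficult here. The only thing to be careful about is the order of composition: one must shrink the neighborhood $U$ (on which $g$ agrees with $h_1$) by intersecting with the preimage $g^{-1}(V)$ rather than $V$ itself, since we need to know both where $g$ is given by a member of $S$ and where $g$ lands inside the good region for $f$. Once that is arranged, the semigroup hypothesis on $S$ does exactly the work of producing a single element $h_2\circ h_1\in S$ matching $f\circ g$ locally.
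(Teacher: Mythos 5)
Your proof is correct and follows essentially the same route as the paper: both arguments verify closure under composition via condition (i) of Proposition \ref{amp-local}, matching $f\circ g$ with $\tilde f\tilde g\in S$ on the open neighborhood $U\cap g^{-1}(V)$ of $x$, and both handle $\F(S)$ as the intersection of the two semigroups $\hF(S)$ and $\Homeo(X)$. No issues.
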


\begin{proof}
Suppose $S\subset C(X,X)$ is a semigroup and consider any two maps 
$f,g\in\hF(S)$.  Given a point $x\in X$, there exists an open neighborhood 
$U$ of $x$ and a map $\tilde g\in S$ such that $g|_U=\tilde g|_U$.  
Further, there exists an open neighborhood $W$ of the point $g(x)$ and a 
map $\tilde f\in S$ such that $f|_W=\tilde f|_W$.  Then the composition 
$fg$ coincides with $\tilde f\tilde g$, which belongs to $S$, on 
$g^{-1}(W)\cap U$, which is an open neighborhood of $x$.  Hence $fg$ is 
locally an element of $S$.  Thus $\hF(S)$ is a semigroup.

Now assume $S\subset\Homeo(X)$ is a group.  By the above $\hF(S)$ is a 
semigroup.  Then $\F(S)$ is also a semigroup as it is the intersection of 
two semigroups $\hF(S)$ and $\Homeo(X)$.  It remains to show that for any 
homeomorphism $f\in\F(S)$, the inverse $f^{-1}$ is also in $\F(S)$.  Take 
an arbitrary point $x\in X$.  Since $f$ is locally an element of $S$, there 
exists an open neighborhood $U$ of the point $f^{-1}(x)$ and a map $g\in S$ 
such that $f|_U=g|_U$.  Then $f^{-1}$ coincides with $g^{-1}$, which 
belongs to $S$, on $f(U)$, which is an open neighborhood of $x$.  Thus 
$f^{-1}$ is locally an element of $S$.
\end{proof}

\begin{definition}
Given subgroups $G$ and $\widetilde G$ of $\Homeo(X)$, we say that 
$\widetilde G$ \textbf{amplifies} $G$ if $G\subset\widetilde 
G\subset\F(G)$.  The group $\F(G)$ is called the \textbf{full 
amplification} of $G$.  In the case $\F(G)=G$, the group $G$ is called 
\textbf{ample} or \textbf{fully amplified}.
\end{definition}

For any group $G\subset\Homeo(X)$, it follows from Lemmas \ref{amp-amp} and 
\ref{ample-semigroup} that $\F(G)$ is an ample group.

\begin{definition}
Given a group $G\subset\Homeo(X)$, for any $x\in X$ we denote by 
$\Orb_G(x)$ the \textbf{orbit} of the point $x$ under the natural action of 
the group: $\Orb_G(x)=\{g(x)\mid g\in G\}$.  We refer to any set of the 
form $\Orb_G(x)$ as an orbit of the group $G$.
\end{definition}

An important property of the amplification is that this procedure enlarges 
a group while not enlarging orbits of the natural action.  In particular, 
the ample group $\F(G)$ has the same orbits as the group $G$.

\section{Stabilizers and local subgroups}\label{sect-stab}

Any group $G\subset\Homeo(X)$ acts naturally on the topological space $X$.  
This action induces several other actions.  In this section we define 
various stabilizers related to those actions.  Note that the stabilizers of 
any group action on any set are subgroups of the acting group.

\begin{definition}
The \textbf{stabilizer} $\St_G(x)$ of a point $x\in X$ under the action of 
the group $G$ consists of those elements of $G$ that fix $x$: 
$\St_G(x)=\{g\in G\mid g(x)=x\}$.
\end{definition}

The action of the group $G$ on the set $X$ induces an action on subsets of 
$X$.

\begin{definition}\label{def-stab-set}
The \textbf{(set) stabilizer} $\St_G(Y)$ of a set $Y\subset X$ under the 
action of the group $G$ consists of those elements of $G$ that map $Y$ onto 
itself.  The \textbf{pointwise stabilizer} $\Stp_G(Y)$ of $Y$ consists of 
those elements of $\St_G(Y)$ that fix every point of the set $Y$.  The 
\textbf{rigid stabilizer} $\RiSt_G(Y)$ of $Y$ consists of those elements of 
$\St_G(Y)$ that fix every point not in $Y$.
\end{definition}

Clearly, the stabilizer $\St_G(x)$ of a point $x\in X$ coincides with the 
stabilizer $\St_G(\{x\})$ of the one-point set $\{x\}$ as well as with 
$\Stp_G(\{x\})$.  For any subset $Y\subset X$, the individual stabilizer 
$\Stp_G(Y)$ is the intersection of all point stabilizers $\St_G(y)$, $y\in 
Y$.

The pointwise stabilizer can be interpreted as the usual stabilizer of an 
induced action of $G$ on ordered subsets of $X$.  Consider the set of all 
pairs $(Y,\prec)$, where $Y\subset X$ and $\prec$ is an order on $Y$.  The 
group $G$ acts naturally on this set.  Clearly, the stabilizer of a pair 
$(Y,\prec)$ always contains $\Stp_G(Y)$.  In the case $\prec$ is a 
well-ordering on the subset $Y$, it is easy to observe that the stabilizer 
of $(Y,\prec)$ is exactly $\Stp_G(Y)$.

For any invertible map $f:X\to X$ we have $f(Y)=Y$ if and only if 
$f(X\setminus Y)=X\setminus Y$.  Hence $\St_G(Y)=\St_G(X\setminus Y)$.  It 
follows that $\RiSt_G(Y)=\Stp_G(X\setminus Y)$.

The set stabilizer $\St_G(Y)$ acts naturally on the set $Y$.  The pointwise 
stabilizer $\Stp_G(Y)$ is the kernel of that action.  Therefore it is 
a normal subgroup of $\St_G(Y)$.  Similarly, $\RiSt_G(Y)$ is a normal 
subgroup of $\St_G(X\setminus Y)=\St_G(Y)$. 

\begin{lemma}\label{ample-stab-set}
Suppose $G\subset\Homeo(X)$ is an ample group.  Then for any subset 
$Y\subset X$ the stabilizers $\St_G(Y)$, $\Stp_G(Y)$ and $\RiSt_G(Y)$ are 
also ample groups.
\end{lemma}

\begin{proof}
Since the set stabilizer $\St_G(Y)$ is a subgroup of $G$, its full 
amplification $\F(\St_G(Y))$ is a subgroup of $\F(G)=G$.  Take any map
$f\in\F(\St_G(Y))$.  Then $f$ is piecewise an element of $\St_G(Y)$, which 
implies that $f(Y)\subset Y$.  Since $\St_G(Y)=\St_G(X\setminus Y)$, it 
also follows that $f(X\setminus Y)\subset X\setminus Y$.  As $f$ is 
invertible, we obtain $f(Y)=Y$ so that $f\in\St_G(Y)$.  Thus the group 
$\St_G(Y)$ is ample.

If $Y$ is the empty set or $X$ then one of the groups $\Stp_G(Y)$ and 
$\RiSt_G(Y)$ is trivial while the other coincides with $G$, and both are 
ample.  Now assume $Y$ is different from $\emptyset$ and $X$.  By the above 
for any point $x\in X$ the stabilizer $\St_G(x)=\St_G(\{x\})$ is an ample 
group.  Therefore the point stabilizer $\Stp_G(Y)$ is the intersection of 
ample groups $\St_G(x)$, $x\in Y$ while the rigid stabilizer $\RiSt_G(Y)$ 
is the intersection of ample groups $\St_G(x)$, $x\notin Y$.  It is easy to 
observe that for any collection of ample subgroups of $\Homeo(X)$, their 
intersection is ample as well.
\end{proof}

The action of the group $G$ on subsets of $X$ induces an action on 
collections of such subsets (that is, on the set of subsets of the set of 
subsets of $X$).

\begin{definition}\label{def-stab-coll}
The \textbf{(collective) stabilizer} $\St_G(Y_1,Y_2,\dots,Y_k)$ of a 
collection of distinct sets $Y_i\subset X$, $1\le i\le k$ under the action 
of the group $G$ consists of those elements of $G$ that map each set in the 
collection onto (the same or another) set in the collection.  The 
\textbf{individual stabilizer} $\Sti_G(Y_1,Y_2,\dots,Y_k)$ of the 
collection consists of those elements of $\St_G(Y_1,Y_2,\dots,Y_k)$ that 
map each set $Y_i$, $1\le i\le k$ onto itself.
\end{definition}

Clearly, the individual stabilizer $\Sti_G(Y_1,Y_2,\dots,Y_k)$ is the 
intersection of the set stabilizers $\St_G(Y_i)$, $1\le i\le k$.  In the 
case when the collection consists of a single set $Y$, the notation 
$\St_G(Y)$ is not ambiguous since the collective stabilizer of the 
collection $\{Y\}$ coincides with the set stabilizer of the set $Y$.

The collective stabilizer $\St_G(Y_1,Y_2,\dots,Y_k)$ acts naturally on the 
set $\{Y_1,Y_2,\dots,Y_k\}$.  The individual stabilizer 
$\Sti_G(Y_1,Y_2,\dots,Y_k)$ is the kernel of that action.  Therefore it is 
a normal subgroup of $\St_G(Y_1,Y_2,\dots,Y_k)$.

\begin{definition}
Let $f\in\Homeo(X)$.  A point $x\in X$ is called a \textbf{fixed point} of 
$f$ if $f(x)=x$.  The set of all fixed points of $f$ is denoted $\Fix(f)$.  
Further, a point $x\in X$ is called a \textbf{support point} of $f$ if the 
homeomorphism $f$ does not coincide with the identity map in any 
neighborhood of $x$.  The set of all support points of $f$ is called the 
\textbf{support} of $f$ and denoted $\supp(f)$.
\end{definition}

By definition, the support of $f$ is the complement of the interior of 
$\Fix(f)$.  Equivalently, $\supp(f)=\overline{X\setminus\Fix(f)}$, the 
closure of the complement of $\Fix(f)$.

If two homeomorphisms $f$ and $g$ have disjoint supports then, clearly, 
they commute: $fg=gf$.  In fact, $f$ and $g$ commute whenever $\supp(f)$ 
and $\supp(g)$ share no interior point.

\begin{definition}
Let $G$ be a subgroup of $\Homeo(X)$ and $U$ be a clopen subset of $X$.  
The \textbf{local subgroup} of the ample group $\F(G)$ associated to $U$, 
denoted $\F_U(G)$, consists of all maps $f\in\F(G)$ such that 
$\supp(f)\subset U$.  In the case the group $G$ is already ample, we may 
use alternative notation $G_U$.
\end{definition}

For any clopen set $U\subset X$ and homeomorphism $f:X\to X$, the condition 
$\supp(f)\subset U$ is equivalent to the condition $f(x)=x$ for all 
$x\notin U$.  Hence the local subgroup $\F_U(G)$ coincides with the rigid 
stabilizer $\RiSt_{\F(G)}(U)$.  By Lemma \ref{ample-stab-set}, $\F_U(G)$ is 
an ample group.

\section{Generalized permutations}\label{subgroups}

For any integer $n\ge1$ let $\sS_n$ denote the \emph{symmetric group} of 
all permutations on the set $\{1,2,\dots,n\}$.

\begin{definition}\label{def-gen-perm}
Let $U$ be a clopen subset of a topological space $X$.  Suppose 
$f_1,f_2,\dots,f_n$ are homeomorphisms of $X$ such that the images 
$f_1(U),f_2(U),\dots,f_n(U)$ are disjoint.  Then for any permutation 
$\pi\in\sS_n$ we define a \textbf{generalized permutation} 
$\mu[U;f_1,f_2,\dots,f_n;\pi]:X\to X$ by
\[
\mu[U;f_1,f_2,\dots,f_n;\pi](x)=
\begin{cases}
f_{\pi(i)}(f_i^{-1}(x)) & \text{if $x\in f_i(U)$, $1\le i\le n$,}\\
x & \text{otherwise.}
\end{cases}
\]
\end{definition}

The generalized permutation $\mu[U;f_1,f_2,\dots,f_n;\pi]$ is a 
homeomorphism of $X$ that permutes disjoint clopen sets 
$f_1(U),f_2(U),\dots,f_n(U)$ while fixing the rest of the space.  For any 
$x\in U$ and $i\in\{1,2,\dots,n\}$ the point $f_i(x)$ is mapped to 
$f_{\pi(i)}(x)$.  The generalized permutation has finite order in the group 
$\Homeo(X)$, which is the same as the order of the permutation $\pi$.  If 
the sets $f_1(U),f_2(U),\dots,f_n(U)$ are not disjoint then 
$\mu[U;f_1,f_2,\dots,f_n;\pi]$ is not defined.

\begin{lemma}\label{gen-perm-homo}
Suppose $U\subset X$ is a clopen set and $f_1,f_2,\dots,f_n$ are 
homeomorphisms of $X$ such that the images $f_1(U),f_2(U),\dots,f_n(U)$ are 
disjoint.  Then the map $\Phi:\sS_n\to\Homeo(X)$ given by 
$\Phi(\pi)=\mu[U;f_1,f_2,\dots,f_n;\pi]$ is a group homomorphism.
\end{lemma}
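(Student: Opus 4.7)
The plan is to verify the homomorphism identity $\Phi(\pi\sigma) = \Phi(\pi)\circ\Phi(\sigma)$ directly from Definition \ref{def-gen-perm}, by tracking where a point $x\in X$ lands under each side. First I would observe that $\Phi$ is well-defined: the hypotheses on $U$ and $f_1,\dots,f_n$ depend only on the data $(U;f_1,\dots,f_n)$, not on the permutation, so $\mu[U;f_1,\dots,f_n;\pi]$ is a bona fide homeomorphism for every $\pi\in\sS_n$. Also, $\Phi(\id)=\id_X$ is immediate from the definition.

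For the main computation, set $V_i=f_i(U)$ for $1\le i\le n$; by assumption these are pairwise disjoint clopen sets. Fix $\pi,\sigma\in\sS_n$. I split into two cases based on where $x$ lies.

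In the first case, suppose $x\in V_i$ for some (necessarily unique) $i$. Then by the defining formula, $\Phi(\sigma)(x)=f_{\sigma(i)}f_i^{-1}(x)$, which lies in $V_{\sigma(i)}$. Applying the formula for $\Phi(\pi)$ at this new point (using index $\sigma(i)$), I get
\[
\Phi(\pi)\bigl(\Phi(\sigma)(x)\bigr)=f_{\pi(\sigma(i))}f_{\sigma(i)}^{-1}\bigl(f_{\sigma(i)}f_i^{-1}(x)\bigr)=f_{(\pi\sigma)(i)}f_i^{-1}(x)=\Phi(\pi\sigma)(x),
\]
where the middle equality uses $f_{\sigma(i)}^{-1}f_{\sigma(i)}=\id_X$ and the convention $(\pi\sigma)(i)=\pi(\sigma(i))$ for composition in $\sS_n$. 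In the second case, if $x$ lies in none of the $V_i$, then $\Phi(\sigma)(x)=x$ and in particular $\Phi(\sigma)(x)$ is not in any $V_j$ either; hence $\Phi(\pi)(\Phi(\sigma)(x))=x=\Phi(\pi\sigma)(x)$.

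Combining the two cases gives $\Phi(\pi\sigma)=\Phi(\pi)\Phi(\sigma)$ as homeomorphisms of $X$, so $\Phi$ is a homomorphism. There is no real obstacle here; the argument is a routine bookkeeping exercise, and the only subtle point is being consistent with the composition convention in $\sS_n$ when identifying $f_{\pi(\sigma(i))}$ with $f_{(\pi\sigma)(i)}$.
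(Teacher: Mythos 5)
Your proof is correct and follows essentially the same route as the paper's: a pointwise verification of $\Phi(\pi\sigma)=\Phi(\pi)\Phi(\sigma)$, splitting into the case $x\in f_i(U)$ (where one tracks that $\Phi(\sigma)(x)$ lands in $f_{\sigma(i)}(U)$ and cancels $f_{\sigma(i)}^{-1}f_{\sigma(i)}$) and the case where $x$ lies outside all the images. The extra remarks on well-definedness and $\Phi(\id)=\id_X$ are harmless additions; nothing is missing.
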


The proof of Lemma \ref{gen-perm-homo} is straightforward and we omit it.

Let $G$ be a subgroup of $\Homeo(X)$ and $\F(G)$ be its full amplification.
If homeomorphisms $f_1,f_2,\dots,f_n$ belong to the group $G$, then any 
generalized permutation of the form $\mu[U;f_1,f_2,\dots,f_n;\pi]$ belongs 
to $\F(G)$.

\begin{definition}
The \textbf{generalized symmetric group} $\sS(G)$ over $G$ is the subgroup 
of $\F(G)$ generated by all generalized permutations in $\F(G)$.
\end{definition}

The group $\sS(G)$ was introduced (in a more general context of \'{e}tale 
groupoids) by Nekrashevych \cite{Nek16}.

\begin{lemma}\label{gen-perm-normal-subgroup}
$\sS(G)$ is a normal subgroup of the group $\F(G)$.
\end{lemma}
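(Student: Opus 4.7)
The plan is to combine the two preceding lemmas directly: Lemma \ref{gen-sym} supplies a convenient generating set for $\sS(G)$, and Lemma \ref{gen-perm-conj} shows that this generating set is closed under conjugation by any element of $\F(G)$.

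More concretely, I would first observe that to establish normality of $\sS(G)$ in $\F(G)$, it suffices to check that $h X h^{-1}\subset\sS(G)$ for any $h\in\F(G)$, where $X$ is a symmetric generating set of $\sS(G)$.  By Lemma \ref{gen-sym}, I may take $X$ to be the set of all generalized permutations of the form $g=\mu[U;f_1,f_2,\dots,f_n;\pi]$ with $f_1,\dots,f_n\in\F(G)$ (this collection is symmetric, since the inverse of $\mu[U;f_1,\dots,f_n;\pi]$ is $\mu[U;f_1,\dots,f_n;\pi^{-1}]$ by Lemma \ref{gen-perm-homo}, which is again of the same form).

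Next, for any such $g$ and any $h\in\F(G)$, Lemma \ref{gen-perm-conj} gives
\[
hgh^{-1}=\mu[U;hf_1,hf_2,\dots,hf_n;\pi].
\]
Since $\F(G)$ is a group and each $f_i\in\F(G)$, each composition $hf_i$ also lies in $\F(G)$.  Therefore $hgh^{-1}$ is a generalized permutation whose defining homeomorphisms all belong to $\F(G)$, so by Lemma \ref{gen-sym} it belongs to $\sS(G)$.

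This conjugation-closedness of the generating set yields $h\sS(G)h^{-1}\subset\sS(G)$ for all $h\in\F(G)$, which is exactly normality.  There is no real obstacle here; the only subtlety worth a sentence in the written proof is that Lemma \ref{gen-perm-conj} automatically guarantees that the conjugated expression is a \emph{well-defined} generalized permutation (the disjointness of the images $hf_i(U)$ is immediate because $h$ is a homeomorphism), so one does not need to re-verify the hypotheses of Definition \ref{def-gen-perm} by hand.
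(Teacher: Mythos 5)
Your proof is correct and follows essentially the same route as the paper's: invoke Lemma \ref{gen-sym} to reduce to the generating set of generalized permutations with $f_i\in\F(G)$, then use Lemma \ref{gen-perm-conj} to see that this generating set is closed under conjugation by $\F(G)$. The extra remark about symmetry of the generating set is harmless but not needed, since conjugation by $h$ is an automorphism and therefore $h\langle X\rangle h^{-1}\subset\langle X\rangle$ already follows from $hXh^{-1}\subset\langle X\rangle$.
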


\begin{proof}
It is enough to show that any conjugate of a generalized permutation in the 
group $\Homeo(X)$ is itself a generalized permutation.  After that the 
proof of the lemma is straightforward.  Suppose a generalized permutation 
$g=\mu[U;f_1,f_2,\dots,f_n;\pi]$ is defined.  Then 
$f_1(U),f_2(U),\dots,f_n(U)$ are disjoint clopen sets.  The 
support of $g$ is the union of those $f_i(U)$ for which $\pi(i)\ne i$.  For 
any $x\in U$ and $i\in\{1,2,\dots,n\}$ we have $g(f_i(x))=f_{\pi(i)}(x)$.  
Now consider an arbitrary homeomorphism $h\in\Homeo(X)$.  It is easy to 
observe that $\supp(hgh^{-1})=h\bigl(\supp(g)\bigr)$.  Hence the support of 
$hgh^{-1}$ is contained in the union of sets 
$hf_1(U),hf_2(U),\dots,hf_n(U)$, which are disjoint clopen sets.  
Furthermore, for any $x\in U$ and $i\in\{1,2,\dots,n\}$ we have 
$hgh^{-1}(hf_i(x))=hg(f_i(x))=hf_{\pi(i)}(x)$.  It follows that 
$hgh^{-1}=\mu[U;hf_1,hf_2,\dots,hf_n;\pi]$.  Thus $hgh^{-1}$ is a 
generalized permutation.
\end{proof}

\begin{lemma}\label{gen-sym}
For any group $G\subset\Homeo(X)$ the generalized symmetric group $\sS(G)$ 
is generated by all maps of the form $\mu[U;f_1,f_2,\dots,f_n;\pi]$, where 
$f_1,f_2,\dots,f_n\in\F(G)$.
\end{lemma}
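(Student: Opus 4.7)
My plan is to prove both inclusions; only one is substantial. The easy direction, that every map $\mu[U;f_1,\dots,f_n;\pi]$ with $f_i\in\F(G)$ belongs to $\sS(G)$, follows from Lemma \ref{amp-amp}: such a map coincides on each $f_i(U)$ with $f_{\pi(i)}f_i^{-1}\in\F(G)$ and with $\id$ elsewhere, so it is piecewise an element of $\F(G)$ and therefore itself lies in $\F(\F(G))\cap\Homeo(X)=\F(G)$; hence it is a generalized permutation in $\F(G)$ and belongs to $\sS(G)$.

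For the reverse inclusion, I must show that a generic generalized permutation $g=\mu[U;f_1,\dots,f_n;\pi]\in\F(G)$ is a product of generalized permutations with ingredients in $\F(G)$ (in fact, in $G$). First, I refine: because $g$ is locally an element of $G$, a standard compactness argument produces a clopen partition $U=U_1\sqcup\cdots\sqcup U_m$ fine enough that for every $i$ and $k$ the restriction $g|_{f_i(U_k)}$ equals a single element of $G$. The generalized permutations $g_k=\mu[U_k;f_1,\dots,f_n;\pi]$ then have pairwise disjoint supports $\bigsqcup_i f_i(U_k)$, so they commute and multiply to $g$. Decomposing $\pi$ into its disjoint cycles further factors each $g_k$ via Lemma \ref{gen-perm-homo} into commuting cyclic pieces (their supports are disjoint because the cycle supports are), so the problem reduces to the case where $\pi$ is a single cycle $(1,2,\dots,n)$.

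It remains to rewrite such a cyclic piece $\tilde g=\mu[U_k;f_1,\dots,f_n;(1,2,\dots,n)]$ using ingredients from $G$. Setting $V_i=f_i(U_k)$, we have $\tilde g|_{V_i}=h_i\in G$ with $h_i(V_i)=V_{i+1}$ (indices mod $n$). Define $\phi_1=\id$ and $\phi_j=h_{j-1}h_{j-2}\cdots h_1\in G$ for $2\le j\le n$; then $\phi_j(V_1)=V_j$, so the images are pairwise disjoint and the generalized permutation $\mu[V_1;\phi_1,\phi_2,\dots,\phi_n;(1,2,\dots,n)]$ is defined with all ingredients in $G\subset\F(G)$. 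A direct calculation in which the inner factors telescope shows $\phi_{i+1}\phi_i^{-1}|_{V_i}=h_i$ for $1\le i<n$, while for $i=n$ the required equality $\phi_1\phi_n^{-1}|_{V_n}=h_n$ is equivalent to the closure identity $h_nh_{n-1}\cdots h_1|_{V_1}=\id_{V_1}$. This closure identity is the only nontrivial point, and I expect it to hold automatically: since $\tilde g^n$ corresponds via Lemma \ref{gen-perm-homo} to the trivial permutation, it equals the identity, which forces the composition along the cycle to be trivial on $V_1$.
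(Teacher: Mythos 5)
Your proof is correct, and it follows the paper's skeleton (easy inclusion via $\F(\F(G))=\F(G)$, then reduction to a single cycle via the homomorphism of Lemma \ref{gen-perm-homo}) but diverges at the crucial final step. The paper re-represents a cyclic piece $g_j$ with ingredients taken from $\F(G)$ itself: it first checks that $g_j$ is piecewise an element of $\{\id_X,g\}$ and hence lies in $\F(G)$, and then writes $g_j=\mu\bigl[W_1;\id_X,g_j,\dots,g_j^{m-1};(1\,2\dots m)\bigr]$ using the powers of $g_j$ as the new data; no refinement of $U$ is needed. You instead refine $U$ so that each restriction $g|_{f_i(U_k)}$ is a single element $h_i$ of $G$, and use the partial products $\phi_j=h_{j-1}\cdots h_1\in G$ as ingredients, with the closure identity $h_n\cdots h_1|_{V_1}=\id_{V_1}$ supplied by $\tilde g^{\,n}=\id_X$ (which does follow from Lemma \ref{gen-perm-homo}, so your hedge there is unnecessary). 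The trade-off: your route costs an extra compactness/refinement step but yields the stronger conclusion that $\sS(G)$ is generated by generalized permutations with ingredients in $G$ rather than in $\F(G)$ --- the analogue for general generalized permutations of what the paper only establishes later, and only for $2$-cycles, in Lemma \ref{generated-by-2-cycles-refined}. The paper's choice of $\{\id_X,g_j,\dots,g_j^{m-1}\}$ is more economical for the statement as given.
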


\begin{proof}
Let $P$ be the set of all generalized permutations of the form 
$\mu[U;f_1,f_2,\dots,f_n;\pi]$, where $f_1,f_2,\dots,f_n\in\F(G)$.  It 
follows from the definition that any element of $P$ is piecewise an element 
of the group $\F(G)$.  Therefore $P$ is contained in the group $\F(\F(G))$, 
which coincides with $\F(G)$ due to Lemma \ref{amp-amp}.

To prove the lemma, it is enough to show that any generalized permutation 
$g\in\F(G)$ is a product of elements of the set $P$.  The map $g$ is 
represented as $\mu[U;f_1,f_2,\dots,f_n;\pi]$, where maps 
$f_1,f_2,\dots,f_n$ need not be in $\F(G)$.  Let $\pi=\si_1\si_2\dots\si_k$ 
be the decomposition of the permutation $\pi$ as a product of disjoint 
cycles.  Lemma \ref{gen-perm-homo} implies that $g=g_1g_2\dots g_k$, where 
$g_j=\mu[U;f_1,f_2,\dots,f_n;\si_j]$, $1\le j\le k$.  We are going to show 
that each $g_j$ belongs to $P$. 

First let us show that $g_j\in\F(G)$.  For any $i\in\{1,2,\dots,n\}$ we 
have either $\si_j(i)=\pi(i)$ or $\si_j(i)=i$.  In the former case, $g_j$ 
coincides with $g$ on the clopen set $f_i(U)$.  In the latter case, $g_j$ 
coincides with the identity map on $f_i(U)$.  On the complement of the 
union of sets $f_i(U)$, $1\le i\le n$, both $g$ and $g_j$ coincide with the 
identity map.  Hence $g_j$ is piecewise an element of the set 
$\{\id_X,g\}$.  Since $g\in\F(G)$, it follows that $g_j\in\F(\F(G))=\F(G)$.

Recall that the permutation $\si_j$ is a cycle: $\si_j=(i_1\,i_2\dots 
i_m)$, where $i_1,i_2,\dots,i_m$ are distinct elements of 
$\{1,2,\dots,n\}$.  Let $W_s=f_{i_s}(U)$, $1\le s\le m$.  Then 
$W_1,W_2,\dots,W_m$ are disjoint clopen sets and $g_j$ coincides with the 
identity map away from them.  We have $g_j(W_s)=W_{s+1}$ for $1\le s\le 
m-1$ and $g_j(W_m)=W_1$.  It follows by induction that $W_s=g_j^{s-1}(W_1)$ 
for $1\le s\le m$.  Since the cycle $\si_j$ has order $m$ in the group 
$\sS_n$, Lemma \ref{gen-perm-homo} implies that $g_j^m=\id_X$.  As a 
consequence, $(g_j^{m-1})^{-1}$ coincides with $g_j$ on $W_m$ (as well as 
anywhere else on $X$).  All this leads to an alternative representation of 
$g_j$ as a generalized permutation:
\[
g_j=\mu\bigl[W_1;\id_X,g_j,g_j^2,\dots,g_j^{m-1};(1\,2\dots m)\bigr].
\]
Since $g_j$ and all its powers belong to the group $\F(G)$, we conclude 
that $g_j\in P$.
\end{proof}

\begin{definition}\label{def-2-cycle}
Let $U$ be a clopen subset of $X$.  For any homeomorphism $f:X\to X$ such 
that the image $f(U)$ is disjoint from $U$, we define a \textbf{generalized 
2-cycle} $\de_{U;f}:X\to X$ by
\[
\de_{U;f}(x)=
\begin{cases}
f(x) & \text{if $x\in U$,}\\
f^{-1}(x) & \text{if $x\in f(U)$,}\\
x & \text{otherwise.}
\end{cases}
\]
\end{definition}

The generalized $2$-cycles are a special case of the generalized 
permutations.  Indeed, $\de_{U;f}=\mu[U;\id_X,f;(1\,2)]$.  As a 
consequence, $\de_{U;f}\in\sS(G)$ whenever $f\in G$.

Recall that the symmetric group $\sS_n$ is generated by all $2$-cycles 
$(i\,j)$ in $\sS_n$.

\begin{lemma}\label{generated-by-2-cycles}
For any group $G\subset\Homeo(X)$, a generalized $2$-cycle belongs to 
$\F(G)$ if and only if it is of the form $\de_{U;f}$, where 
$f\in\F(G)$.  The generalized symmetric group $\sS(G)$ is generated by all 
generalized $2$-cycles in $\F(G)$.
\end{lemma}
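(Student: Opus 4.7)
The plan has two essentially independent parts, mirroring the two sentences of the statement.

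For the first part, the ``if'' direction is immediate: if $f\in\F(G)$ then $\de_{U;f}=\mu[U;\id_X,f;(1\,2)]$ is piecewise an element of $\F(G)$, so it lies in $\F(\F(G))=\F(G)$ by Lemma \ref{amp-amp}. For the ``only if'' direction, suppose a generalized $2$-cycle $\de_{U;h}\in\F(G)$ is given, where $h\in\Homeo(X)$ with $h(U)\cap U=\emptyset$. The key observation is that only the restriction of $h$ to $U$ enters the definition of $\de_{U;h}$, so we are free to redefine $h$ outside $U$. The cleanest choice is to take $\tilde f:=\de_{U;h}$ itself: then $\tilde f\in\F(G)$ by hypothesis, $\tilde f$ coincides with $h$ on $U$, hence $\tilde f(U)=h(U)$ is disjoint from $U$, and a direct check of the three cases in Definition \ref{def-2-cycle} gives $\de_{U;\tilde f}=\de_{U;h}$.

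For the second part, by Lemma \ref{gen-sym} it suffices to show that every generalized permutation of the form $g=\mu[U;f_1,\dots,f_n;\pi]$ with $f_1,\dots,f_n\in\F(G)$ is a product of generalized $2$-cycles lying in $\F(G)$. Decomposing $\pi=\tau_1\tau_2\cdots\tau_m$ as a product of transpositions in $\sS_n$ and applying the homomorphism of Lemma \ref{gen-perm-homo}, I get $g=g_1g_2\cdots g_m$ with $g_s=\mu[U;f_1,\dots,f_n;\tau_s]\in\F(G)$. It remains to identify each transposition factor with a generalized $2$-cycle. If $\tau_s=(i\,j)$, then unraveling the definition of $\mu[U;f_1,\dots,f_n;(i\,j)]$ shows that it acts as $f_jf_i^{-1}$ on $f_i(U)$, as $(f_jf_i^{-1})^{-1}$ on $f_j(U)=(f_jf_i^{-1})(f_i(U))$, and as the identity elsewhere. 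Setting $U'=f_i(U)$ (clopen) and $h=f_jf_i^{-1}\in\F(G)$, the disjointness hypothesis on the images $f_1(U),\dots,f_n(U)$ guarantees $h(U')\cap U'=\emptyset$, so this factor equals $\de_{U';h}$, a generalized $2$-cycle in $\F(G)$ by the first part.

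There is no real obstacle here: everything reduces to unwinding definitions and applying Lemmas \ref{amp-amp}, \ref{gen-perm-homo} and \ref{gen-sym} already established earlier. The only point that requires a touch of care is the ``only if'' half of the first part, where one has to notice that the generalized $2$-cycle itself can serve as the required $\F(G)$-element, avoiding any attempt to modify $h$ globally into a homeomorphism in $\F(G)$.
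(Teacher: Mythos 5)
Your proposal is correct and follows essentially the same route as the paper: the "only if" half of the first claim is handled by observing that the generalized $2$-cycle itself can play the role of the required element of $\F(G)$, and the second claim is obtained by starting from Lemma \ref{gen-sym}, decomposing the permutation into transpositions via the homomorphism of Lemma \ref{gen-perm-homo}, and identifying each transposition factor with $\de_{f_i(U);f_jf_i^{-1}}$ (the paper cites Lemma \ref{gen-perm-conj} for this last identification, whereas you verify it directly, which amounts to the same computation).
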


\begin{proof}
If a generalized $2$-cycle $g=\de_{U;f}$ is defined and $f\in\F(G)$, then 
$g$ is piecewise an element of the group $\F(G)$.  Hence it belongs to the 
group $\F(\F(G))$, which coincides with $\F(G)$ due to Lemma 
\ref{amp-amp}.  Conversely, if $g=\de_{U;f}$ is defined and $g\in\F(G)$, we 
observe that the generalized $2$-cycle $\de_{U;g}$ is also defined and 
$g=\de_{U;g}$.

By Lemma \ref{gen-sym}, the group $\sS(G)$ is generated by generalized 
permutations of the form $\mu[U;f_1,f_2,\dots,f_n;\pi]$, where 
$f_1,f_2,\dots,f_n\in\F(G)$.  Those include all maps of the form 
$\de_{U;f}$, $f\in\F(G)$.  Whenever $\mu[U;f_1,f_2,\dots,f_n;\pi]$ is
defined, all maps $\mu[U;f_1,f_2,\dots,f_n;\si]$, $\si\in\sS_n$ are defined 
as well.  Assuming $f_1,f_2,\dots,f_n\in\F(G)$, they all belong to the
group $\F(\F(G))=\F(G)$.  Moreover, 
$\sS_n\ni\si\mapsto\mu[U;f_1,f_2,\dots,f_n;\si]$ is a homomorphism of the 
group $\sS_n$ to $\F(G)$ due to Lemma \ref{gen-perm-homo}.  Since the group 
$\sS_n$ is generated by $2$-cycles $(i\,j)$, it follows that the group 
$\sS(G)$ is generated by generalized permutations of the form 
$\mu[U;f_1,f_2,\dots,f_n;(i\,j)]$.  It remains to show that each 
$\mu[U;f_1,f_2,\dots,f_n;(i\,j)]$ is a generalized $2$-cycle of prescribed 
form.  First we observe that 
$\mu[U;f_1,f_2,\dots,f_n;(i\,j)]=\mu[U;f_i,f_j;(1\,2)]$.  Then we observe 
that $\mu[U;f_i,f_j;(1\,2)]=\de_{f_i(U);f_jf_i^{-1}}$.  Finally, we note 
that $f_jf_i^{-1}\in\F(G)$.
\end{proof}

\section{Regularity properties}\label{prop}

In this section we formulate several useful properties that a subgroup $G$
of $\Homeo(X)$ may or may not have, and establish relations between them.  
When proving statements on the ample group $\F(G)$, we will usually need 
some of those properties.  Let us begin with a very well known property 
that will be required most of the time.

\begin{definition}\label{def-minimality}
We say that a group $G\subset\Homeo(X)$ acts \textbf{minimally} on $X$ (or 
that the natural action of $G$ on $X$ is \textbf{minimal}) if there are no 
closed sets invariant under the action other than the empty set and $X$ 
itself.
\end{definition}

A set $Y\subset X$ is \emph{invariant} under a homeomorphism $f:X\to X$ if 
$f(Y)\subset Y$.  It is invariant under the natural action of a group 
$G\subset\Homeo(X)$ if $g(Y)\subset Y$ for all $g\in G$.  For any $g\in G$, 
the inverse map $g^{-1}$ is in $G$ as well.  Assuming invariance of the set 
$Y$, we have $g(Y)\subset Y$ and $g^{-1}(Y)\subset Y$, which implies that, 
in fact, $g(Y)=Y$ for all $g\in G$.

For any point $x\in X$ the closure $\overline{\Orb_G(x)}$ of its orbit is a 
closed set invariant under the action of the group $G$.  It easily follows 
that the action is minimal if and only if every orbit is dense in $X$.

\begin{lemma}\label{finite-or-Cantor}
Suppose a topological space $X$ is totally disconnected, compact and 
metrizable.  If there is a group $G\subset\Homeo(X)$ that acts minimally on 
$X$, then $X$ is either finite or a Cantor set.
\end{lemma}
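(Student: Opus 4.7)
The plan is to use the standard topological characterization of the Cantor set: a nonempty, compact, metrizable, totally disconnected space is a Cantor set if and only if it has no isolated points. So, assuming $X$ is infinite, I need to show $X$ is perfect.

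First I would argue by contradiction and suppose $x \in X$ is an isolated point, so that $\{x\}$ is clopen. Because every $g \in G$ is a homeomorphism, the image $g(\{x\}) = \{g(x)\}$ is again clopen, which means every point of the orbit $\Orb_G(x)$ is isolated. Consequently $\Orb_G(x)$ is a union of open singletons and is therefore itself open in $X$.

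Next I would invoke minimality. The orbit $\Orb_G(x)$ is $G$-invariant, so its complement $X \setminus \Orb_G(x)$ is both $G$-invariant and (by the previous paragraph) closed. Property \ref{prop-M} forces this complement to be empty or all of $X$; since $x$ lies in the orbit, it must be empty. Hence $\Orb_G(x) = X$, so every point of $X$ is isolated, i.e. $X$ carries the discrete topology. Being discrete and compact, $X$ is finite, contradicting the assumption that $X$ is infinite.

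Therefore, when $X$ is infinite, it has no isolated points, and the classical theorem of Brouwer (every nonempty compact metrizable totally disconnected perfect space is homeomorphic to the Cantor set) finishes the proof. The main conceptual step is the observation that when $x$ is isolated the entire orbit $\Orb_G(x)$ is automatically open; once this is noted, minimality does the rest and there are no real obstacles.
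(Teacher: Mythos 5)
Your proof is correct and follows essentially the same route as the paper: both arguments rest on the observation that the isolated points form an open invariant set (the paper phrases this via the closed invariant set of non-isolated points, you via the open invariant orbit of an isolated point), then apply minimality, Brouwer's characterization, and compactness.
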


\begin{proof}
If a point $x\in X$ is isolated, then the set $\{x\}$ is clopen.  It 
follows that the set $Y$ of all non-isolated points of $X$ is closed.  Any 
homeomorphism of $X$ maps an isolated point to an isolated point.  Hence 
the set $Y$ is invariant under the action of any group 
$G\subset\Homeo(X)$.  If the action is minimal, then either $Y$ is empty or 
$Y=X$.  In the case $Y=X$, there are no isolated points.  Then $X$ is a 
Cantor set due to Brouwer's characterization of Cantor sets.  In the case 
$Y$ is empty, every point of $X$ is isolated.  Hence the partition of $X$ 
into points is a partition into clopen sets.  Then compactness of $X$ 
implies that it is finite.
\end{proof}

The following technical statement will be used several times in our paper.

\begin{lemma}\label{map-piecewise}
Let $U,U'\subset X$ be nonempty clopen sets.  If a group 
$G\subset\Homeo(X)$ acts minimally on $X$, then for some $k\ge1$ there 
exist disjoint clopen sets $V_1,V_2,\dots,V_k$ and maps 
$g_1,g_2,\dots,g_k\in G$ such that $U=V_1\sqcup V_2\sqcup\dots\sqcup V_k$ 
and $g_i(V_i)\subset U'$ for all $i$.
\end{lemma}

\begin{proof}
Since the group $G$ acts minimally, the orbit of any point $x\in U$ is 
dense in $X$.  In particular, the orbit has a point in the nonempty clopen 
set $U'$.  Hence $g_x(x)\in U'$ for some map $g_x\in G$.  Let $U_x=U\cap 
g_x^{-1}(U')$.  Then $U_x$ is a clopen subset of $U$ containing the point 
$x$, and $g_x(U_x)\subset U'$.  The clopen sets $U_x$, $x\in U$ cover $U$.  
Since $U$ is compact, there are finitely many points $x_1,x_2,\dots,x_k\in 
U$ such that the sets $U_{x_i}$, $1\le i\le k$ form a subcover.  Let 
$V_1=U_{x_1}$ and $V_i=U_{x_i}\setminus (U_{x_1}\cup\dots\cup U_{x_{i-1}})$ 
for $i=2,3,\dots,k$.  Then $V_1,V_2,\dots,V_k$ are clopen sets that 
partition $U$.  By construction, $g_{x_i}(V_i)\subset 
g_{x_i}(U_{x_i})\subset U'$ for all $i$.
\end{proof}

The other properties formulated in this section are less (if at all) known, 
and we will introduce an initialism for each of them.

\begin{property}{UR}[universal recurrence]\label{prop-UR}
For any open set $U\subset X$, any point $x\in U$ and any element $g\in G$ 
there exists an integer $k\ge1$ such that $g^k(x)\in U$.
\end{property}

A point $x\in X$ is called \emph{recurrent} relative to a homeomorphism 
$f:X\to X$ if the sequence $f(x),f^2(x),f^3(x),\dots$ visits every 
neighborhood of $x$.  Property \ref{prop-UR} means that every point of $X$ 
is recurrent relative to every element of the group $G$.

\begin{property}{NC}[no contraction]\label{prop-NC}
If $g(U)\subset U$ for some element $g\in G$ and open set $U\subset X$ 
then, in fact, $g(U)=U$.
\end{property}

Open sets could be replaced by closed sets in the formulation of Property 
\ref{prop-NC}.  Indeed, a homeomorphism $f\in\Homeo(X)$ contracts an open 
set $U\subset X$ (that is, maps it onto a proper subset of $U$) if and only 
if the inverse map $f^{-1}$ contracts the closed set $f(X\setminus U)$.

Let us mention two obvious cases in which no contraction of any sets, open 
or not, is possible.  The first case is when the topological space $X$ is 
finite.  The second case is when every element of the group $G$ has finite 
order (this actually includes the first case).

\begin{lemma}\label{NC-equiv-UR}
Property \ref{prop-NC} is equivalent to Property \ref{prop-UR}.
\end{lemma}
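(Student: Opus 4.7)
My plan is to prove each implication separately, exploiting the fact that $G$ is a group so that $g\in G$ if and only if $g^{-1}\in G$, which lets me shuttle between $g$ and its inverse.

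For the direction Property \ref{prop-UR} $\Rightarrow$ Property \ref{prop-NC}, I would argue by contradiction. Suppose $g(U)\subsetneq U$ for some $g\in G$ and open $U\subset X$. Pick any $y\in U\setminus g(U)$. The key observation is that $g^{-k}(y)\notin U$ for every $k\ge 1$: otherwise applying $g^k$ would give $y\in g^k(U)\subset g(U)$, contradicting the choice of $y$. So the forward orbit of $y$ under $g^{-1}$ never returns to the open set $U$, which directly contradicts Property \ref{prop-UR} applied to $g^{-1}\in G$, $U$, and $y\in U$.

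For the direction Property \ref{prop-NC} $\Rightarrow$ Property \ref{prop-UR}, I would again go by contradiction. Assume UR fails, so there exist an open set $U\subset X$, a point $x\in U$, and $g\in G$ with $g^k(x)\notin U$ for all $k\ge 1$. I would construct the open set
\[
V=\bigcup_{k\ge 0} g^{-k}(U).
\]
A direct index-shift shows $g^{-1}(V)=\bigcup_{k\ge 1}g^{-k}(U)\subset V$. The only nontrivial point is verifying that this inclusion is strict: since $x\in U\subset V$, it suffices to show $x\notin g^{-1}(V)$, i.e.\ $g(x)\notin V$. But $g(x)\in V$ would mean $g(x)\in g^{-k}(U)$ for some $k\ge 0$, hence $g^{k+1}(x)\in U$ for some $k\ge 0$, which is precisely ruled out by the UR-failure hypothesis. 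So $g^{-1}(V)\subsetneq V$, and applying Property \ref{prop-NC} to $g^{-1}\in G$ and $V$ yields the desired contradiction.

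I do not expect any real obstacles: both directions are short set-chasing arguments, and the only point deserving care is verifying \emph{strictness} in the inclusion $g^{-1}(V)\subsetneq V$ in the second direction and remembering to invoke each Property for $g^{-1}$ rather than $g$. The proof does not use any topological feature of $X$ beyond the fact that unions and preimages of open sets are open, so total disconnectedness, compactness, and metrizability play no role here.
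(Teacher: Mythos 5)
Your proof is correct and follows essentially the same route as the paper: both directions hinge on the auxiliary open set $\bigcup_{k\ge 0}g^{-k}(U)$ and on applying the relevant property to $g^{-1}$, with the paper phrasing one direction directly and the other by contraposition where you use contradiction throughout. The strictness check $x\notin g^{-1}(V)$, which you rightly flag as the one delicate point, is handled correctly.
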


\begin{proof}
Assume that the group $G\subset\Homeo(X)$ has Property \ref{prop-NC}.  Take
any open set $U\subset X$ and element $g\in G$.  Since $g$ is a
homeomorphism, the sets $g^{-1}(U),g^{-2}(U),\dots$ are open as well, and
so is the union $W=U\cup g^{-1}(U)\cup g^{-2}(U)\cup\dots$.  Note that
$g^{-1}(W)=g^{-1}(U)\cup g^{-2}(U)\cup g^{-3}(U)\cup\dots$, which implies
that $g^{-1}(W)\subset W$.  Since $g^{-1}\in G$ and the group $G$ has
Property \ref{prop-NC}, we have $g^{-1}(W)=W$.  As a consequence, any point
$x\in U$ belongs to $g^{-k}(U)$ for some $k\ge1$.  Then $g^k(x)\in U$.
Thus $G$ has Property \ref{prop-UR}.

Conversely, assume that the group $G$ does not have Property \ref{prop-NC}.
Then there exist an open set $U\subset X$ and an element $g\in G$ such that
$g(U)$ is a proper subset of $U$.  The inverse map $h=g^{-1}$ is also an
element of $G$.  Take any point $x\in U$ not in $g(U)$.  The condition
$g(U)\subset U$ implies that $h(X\setminus g(U))\subset X\setminus U$ and
$h(X\setminus U)\subset X\setminus U$.  It follows that the sequence
$h(x),h^2(x),h^3(x),\dots$ never visits $U$.  We conclude that the group
$G$ does not have Property \ref{prop-UR}.
\end{proof}

\begin{lemma}\label{contraction-by-a-point}
Suppose that the group $G$ does not have Property \ref{prop-NC}.  Then
there exist an open set $U\subset X$ and an element $g\in G$ such that
$g(U)\subset U$ and the set difference $U\setminus g(U)$ consists of a
single point.
\end{lemma}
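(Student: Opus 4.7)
The plan is to invoke the hypothesis to obtain an element $g \in G$ and an open set $V \subset X$ with $g(V) \subsetneq V$, then construct the desired $U$ by removing from $X$ the closure of the backward $g$-orbit of a suitable point in $V \setminus g(V)$.

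I would fix $x \in V \setminus g(V)$ and first verify that $g^{-k}(x) \notin V$ for every $k \geq 1$. If $g^{-k}(x)$ were in $V$, then $x = g^k(g^{-k}(x))$ would lie in $g^k(V) \subset g(V)$ (using that iterating $g(V) \subset V$ yields $g^k(V) \subset g(V)$ for $k \geq 1$), contradicting the choice of $x$. Therefore the backward orbit $\{g^{-k}(x) : k \geq 1\}$ lies in the closed set $X \setminus V$, and $V$ is a neighborhood of $x$ disjoint from it.

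Next, let $L$ denote the closure of $\{g^{-k}(x) : k \geq 1\}$ in $X$ and set $U = X \setminus L$. Since $L \subset X \setminus V$ and $x \in V$, the point $x$ is not in $L$, so $U$ is open and contains $x$. To compute $g(U)$, I would use that $g$ is a homeomorphism and hence commutes with taking closures, which gives
\[
g(L) = \overline{g(\{g^{-k}(x) : k \geq 1\})} = \overline{\{g^{-k}(x) : k \geq 0\}} = \{x\} \cup L.
\]
Since $x \notin L$, this is a disjoint union, and therefore $g(U) = X \setminus g(L) = U \setminus \{x\}$, giving $g(U) \subset U$ and $U \setminus g(U) = \{x\}$.

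The main conceptual obstacle is resisting the temptation to \emph{shrink} $V$ in hopes of forcing $V \setminus g(V)$ to become a single point: in general this difference can be uncountable (e.g.\ a clopen cylinder when $X$ is a perfect Cantor set with no isolated points), and no open shrinking of $V$ reduces it to one point. The correct move is instead to \emph{enlarge} beyond $V$, taking the complement of the closure of the backward orbit of $x$; the resulting $U$ is automatically open, and by construction $g$ pulls precisely the single point $x$ in from $X \setminus U$ at each iteration.
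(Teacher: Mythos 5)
Your proof is correct and follows essentially the same route as the paper's: take a point $x$ in $V\setminus g(V)$ whose backward $g$-orbit never returns to $V$, and let $U$ be the complement of the closure of that orbit, so that $g(U)=U\setminus\{x\}$. The only cosmetic difference is that you prove the non-recurrence of the backward orbit directly from $g(V)\subsetneq V$, whereas the paper obtains the same fact by citing its Lemma \ref{NC-equiv-UR} (whose proof contains exactly your argument).
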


\begin{proof}
Lemma \ref{NC-equiv-UR} implies that the group $G$ does not have Property
\ref{prop-UR}.  Hence there exist an open set $W\subset X$, a point
$x\in W$ and a map $h\in G$ such that the sequence
$h(x),h^2(x),h^3(x),\dots$ never visits $W$.  The inverse map $g=h^{-1}$ is
also an element of $G$.  Let $S=\{h^k(x)\mid k\ge1\}$ and
$U=X\setminus\overline{S}$, the complement of the closure of $S$.  Since
$g$ is a homeomorphism, it follows that $g(\overline{S})=\overline{g(S)}$.
It is easy to see that $g(S)=S\cup\{x\}$.  Hence
$\overline{g(S)}=\overline{S}\cup\{x\}$.  Consequently,
$g(U)=X\setminus g(\overline{S})=X\setminus(\overline{S}\cup\{x\})
=U\setminus\{x\}$.  By construction, $U$ is an open set.  Besides,
$W\subset U$ since $W$ is an open set disjoint from $S$.  In particular,
$x\in U$.  Thus $g(U)\subset U$ and $U\setminus g(U)$ consists of a single
point.
\end{proof}

Note that replacing open sets with clopen sets in the formulation of
Property \ref{prop-UR}, we obtain an equivalent property.  On the other
hand, Lemma \ref{contraction-by-a-point} suggests that having no 
contraction of clopen sets may not be enough to have Property \ref{prop-NC}.

\begin{property}{SC}[strong contraction]\label{prop-SC}
For any clopen set $U\subset X$ different from the empty set and $X$ there 
exist maps $f_1,f_2\in G$ such that the images $f_1(U)$ and $f_2(U)$ are 
disjoint subsets of $U$.
\end{property}

In the case the group $G$ is ample, Property \ref{prop-SC} coincides with a 
property introduced by Matui \cite{Matui-pureinf1} in a more general 
context of \'{e}tale groupoids.  Matui calls groupoids with this property 
\emph{purely infinite} (see Definition 4.9 in \cite{Matui-pureinf1}).  Just 
like in \cite{Matui-pureinf1}, we introduce another, closely related 
property.

\begin{property}{UC}[universal contraction]\label{prop-UC}
For any clopen sets $A,B\subset X$ different from the empty set and $X$ 
there exists a map $f\in G$ such that $f(A)\subset B$. 
\end{property}

Properties \ref{prop-SC} and \ref{prop-UC} vacuously hold if the 
topological space $X$ consists of a single point.  Property \ref{prop-UC} 
also holds if $X$ consists of two points and the group $G$ is not trivial.  
Otherwise either of the two properties implies absence of isolated points 
in $X$, which requires $X$ to be a Cantor set.

\begin{lemma}\label{UC-equiv-M-and-SC}
Suppose $X$ is a Cantor set.  Then for any group $G\subset\Homeo(X)$, 
Property \ref{prop-UC} implies minimality and Property \ref{prop-SC}.  If 
the group $G$ is ample then, conversely, minimality and Property 
\ref{prop-SC} imply Property \ref{prop-UC}.
\end{lemma}

\begin{proof}
Assume a group $G\subset\Homeo(X)$ has Property \ref{prop-UC}.  Since the 
topology on $X$ is generated by clopen sets, Property \ref{prop-UC} clearly 
implies that every orbit of the group $G$ is dense.  Hence $G$ acts 
minimally on $X$.  Since $X$ is a Cantor set, any clopen set $U\subset X$ 
different from $\emptyset$ and $X$ consists of more than one point.  
Therefore it can be represented as a disjoint union of two nonempty clopen 
sets $U_1$ and $U_2$.  Property \ref{prop-UC} implies that $f_1(U)\subset 
U_1$ and $f_2(U)\subset U_2$ for some maps $f_1,f_2\in G$.  Then $f_1(U)$ 
and $f_2(U)$ are disjoint subsets of $U$.  Thus the group $G$ has Property 
\ref{prop-SC}.

Conversely, assume the group $G$ is ample, acts minimally on $X$ and has 
Property \ref{prop-SC}.  For any clopen set $W\subset X$ different from 
$\emptyset$ and $X$, there exist maps $f_1,f_2\in G$ such that the images 
$f_1(W)$ and $f_2(W)$ are disjoint subsets of $W$.  Then $f_1(W)$ is a 
proper subset of $W$.  It follows that $f_1^{j+1}(W)$ is a proper subset of 
$f_1^j(W)$ for all $j\ge0$.  Let $D=W\setminus f_1(W)$.  Then 
$f_1^j(D)=f_1^j(W)\setminus f_1^{j+1}(W)$ for all $j\ge0$, which implies 
that $D,f_1(D),f_1^2(D),\dots$ are disjoint subsets of $W$.

Let us show that any clopen set $U\subset X$ different from $\emptyset$ and 
$X$ can be mapped to its complement by an element of $G$.  By the above 
there exist a nonempty clopen set $D$ and a map $f\in G$ such that 
$D,f(D),f^2(D),\dots$ are disjoint subsets of $X\setminus U$.  By Lemma 
\ref{map-piecewise}, for some $k\ge1$ there exist disjoint clopen sets 
$V_1,V_2,\dots,V_k$ and maps $g_1,g_2,\dots,g_k\in G$ such that 
$U=V_1\sqcup V_2\sqcup\dots\sqcup V_k$ and $g_i(V_i)\subset D$ for all 
$i$.  For any $i$, $1\le i\le k$, let $h_i=f^{i-1}g_i$.  Then $h_i\in G$ 
and $h_i(V_i)\subset f^{i-1}(D)$.  Therefore 
$h_1(V_1),h_2(V_2),\dots,h_k(V_k)$ are disjoint subsets of $X\setminus U$.  
Note that the generalized $2$-cycles $\de_{V_i;h_i}$, $1\le i\le k$ are 
well defined.  Since the group $G$ is ample, it contains all of them as 
well as their product $h=\de_{V_1;h_1}\de_{V_2;h_2}\ldots\de_{V_k;h_k}$.  
As the sets $V_1,V_2,\dots,V_k$ and $h_1(V_1),h_2(V_2),\dots,h_k(V_k)$ are 
disjoint from one another, $h(V_i)=\de_{V_i;h_i}(V_i)=h_i(V_i)$ for all $i$ 
so that $h(V_i)\subset X\setminus U$ for all $i$.  Then $h(U)\subset 
X\setminus U$ as well.

Now we can establish Property \ref{prop-UC}.  Take any clopen sets 
$A,B\subset X$ different from the empty set and $X$.  We need to show that 
$A$ can be mapped to $B$ by an element of the group $G$.  First consider 
the case when $A\cup B\ne X$.  Then $C=X\setminus(A\cup B)$ is a nonempty 
clopen set.  By the above, $h_1(A\cup B)\subset C$ and $h_2(X\setminus 
B)\subset B$ for some maps $h_1,h_2\in G$.  Since $C\subset X\setminus B$, 
we obtain that $h_2h_1(A)\subset h_2h_1(A\cup B)\subset h_2(C)\subset 
h_2(X\setminus B)\subset B$.  As for the case when $A\cup B=X$, we have 
$h(A)\subset X\setminus A$ for some $h\in G$.  If $A\cup B=X$ then the 
complement $X\setminus A$ is contained in $B$.
\end{proof}

\begin{property}{CI}[cancellation of intersection]\label{prop-CI}
For any nonempty clopen sets $A,B,C\subset X$ such that $C$ is disjoint 
from $A\cup B$ and the unions $A\cup C$ and $B\cup C$ are different from 
$X$, we have $f(A\cup C)=B\cup C$ for some $f\in G$ if and only if $h(A)=B$
for some $h\in G$.
\end{property}

\begin{definition}\label{def-absorbable}
Given nonempty clopen sets $D,E\subset X$, we say that $D$ can be 
\textbf{absorbed} by $E$ under the action of a group $G\subset\Homeo(X)$ if
$E$ is disjoint from $D$ and $g(D\cup E)=E$ for some $g\in G$.  Further, a 
nonempty clopen set is called \textbf{absorbable} under the action of $G$
if it can be absorbed by another clopen set.
\end{definition}

\begin{lemma}\label{absorbed-by-any}
Suppose a nonempty clopen set $D\subset X$ is absorbable under the action 
of a group $G\subset\Homeo(X)$ that has Property \ref{prop-CI}.  Then $D$ 
can be absorbed by any clopen set $E\subset X\setminus D$ different from 
the empty set and $X\setminus D$.
\end{lemma}

\begin{proof}
Since the set $D$ is absorbable under the action of the group $G$, we have 
$f(D\cup E_0)=E_0$ for some $f\in G$ and clopen set $E_0$ disjoint from 
$D$.  Note that $E_0$ is different from $\emptyset$ and $X\setminus D$.  
Consider an arbitrary clopen set $E\subset X\setminus D$ different from 
$\emptyset$ and $X\setminus D$.  In the case when $E\cap E_0\ne\emptyset$, 
Property \ref{prop-CI} first implies that $g(D\cup(E\cap E_0))=E\cap E_0$ 
for some $g\in G$, then implies that $h(D\cup E)=E$ for some $h\in G$.  
Similarly, in the case when $E\cup E_0\ne X\setminus D$, Property 
\ref{prop-CI} first implies that $g(D\cup(E\cup E_0))=E\cup E_0$ for some 
$g\in G$, then implies that $h(D\cup E)=E$ for some $h\in G$.  Finally, in 
the case when $E\cap E_0=\emptyset$ and $E\cup E_0=X\setminus D$, we have 
$f(E)=D\cup E$ so that $f^{-1}(D\cup E)=E$.  Thus $D$ can be absorbed by 
$E$.
\end{proof}

\begin{lemma}\label{CI-equiv}
Any group $G\subset\Homeo(X)$ with Property \ref{prop-CI} satisfies the 
following two conditions.
\begin{itemize}
\item[(CI-1)]
For any disjoint nonempty clopen sets $A,B,C\subset X$, if $f(A\cup C)= 
B\cup C$ for some $f\in G$ then $h(A)=B$ for some $h\in G$.
\item[(CI-2)]
For any disjoint nonempty clopen sets $D,E_0,E\subset X$, if $f(D\cup E_0)= 
E_0$ for some $f\in G$ then $h(D\cup E)=E$ for some $h\in G$.
\end{itemize}
If the group $G$ is ample then, conversely, conditions CI-1 and CI-2 imply 
Property \ref{prop-CI}.
\end{lemma}

\begin{proof}
Suppose a group $G\subset\Homeo(X)$ has Property \ref{prop-CI}.  For any 
disjoint nonempty sets $A,B,C\subset X$, the unions $A\cup C$ and $B\cup C$ 
are both different from $X$.  Hence condition CI-1 follows directly from 
Property \ref{prop-CI}.  As for condition CI-2, it easily follows from 
Lemma \ref{absorbed-by-any}.

Now let us prove that any ample group $G\subset\Homeo(X)$ satisfying 
conditions CI-1 and CI-2 has Property \ref{prop-CI}.  Given nonempty clopen 
sets $A,B,C\subset X$, where $C$ is disjoint from both $A$ and $B$, $A\cup 
C\ne X$ and $B\cup C\ne X$, we need to show that the set $A\cup C$ can be 
mapped onto $B\cup C$ under the action of $G$ if and only if the set $A$ 
can be mapped onto $B$.  This is trivial when $A=B$.  Consider the case 
when both $A\setminus B$ and $B\setminus A$ are nonempty sets.  If $f(A\cup 
C)=B\cup C$ or $f(A)=B$ for some $f\in G$, condition CI-1 implies that 
$g(A\setminus B)=B\setminus A$ for some $g\in G$.  Since the clopen sets 
$A\setminus B$ and $B\setminus A$ are disjoint, a generalized $2$-cycle 
$h=\de_{A\setminus B;g}$ is well defined.  It belongs to the group $G$ as 
$G$ is ample.  Since $h$ coincides with the identity map on $A\cap B$ and 
on $C$, we obtain that $h(A)=B$ and $h(A\cup C)=B\cup C$.

It remains to consider the case when one of the sets $A$ and $B$ is a 
proper subset of the other.  Without loss of generality, we may assume that 
$B$ is a proper subset of $A$.  Note that the set $B\cup C$ is disjoint 
from $A\setminus B$ and $(A\setminus B)\cup (B\cup C)=A\cup C\ne X$.  Hence 
$X$ is a disjoint union of four nonempty clopen sets $A\setminus B$, $B$, 
$C$ and $X\setminus(A\cup C)$.  Condition CI-2 first implies that the set 
$A\setminus B$ can be absorbed by $B\cup C$ if and only if it can be 
absorbed by $X\setminus(A\cup C)$, then implies that $A\setminus B$ can be 
absorbed by $X\setminus(A\cup C)$ if and only if it can be absorbed by 
$B$.  Consequently, $f(A\cup C)=B\cup C$ for some $f\in G$ if and only if 
$h(A)=B$ for some $h\in G$.
\end{proof}

\begin{lemma}\label{NC-implies-CI}
For any ample group, Property \ref{prop-NC} implies Property \ref{prop-CI}.
\end{lemma}

\begin{proof}
Suppose a group $G\subset\Homeo(X)$ is ample and has Property 
\ref{prop-NC}.  To establish Property \ref{prop-CI}, we are going to show 
that $G$ satisfies conditions CI-1 and CI-2 in Lemma \ref{CI-equiv}.  Since 
the group $G$ allows no contraction of open sets, it also allows no 
absorbable sets.  Hence condition CI-2 holds vacuously.  Let us verify 
condition CI-1.  Consider any disjoint nonempty clopen sets $A,B,C\subset 
X$ such that $f(A\cup C)=B\cup C$ for some $f\in G$.  We need to find an 
element of $G$ that maps $A$ onto $B$.  By Lemma 
\ref{NC-equiv-UR}, Property \ref{prop-NC} is equivalent to Property 
\ref{prop-UR}.  Therefore the group $G$ has Property \ref{prop-UR}.  As a 
consequence, for any point $x\in A$ the sequence $f(x),f^2(x),f^3(x),\dots$ 
visits the set $A$.  In particular, this sequence is not contained in $C$.  
Let $p(x)$ denote the least positive integer $k$ such that $f^k(x)\notin 
C$.  Since $f(A\cup C)=B\cup C$, it follows that $f^{p(x)}(x)\in B$.  
Likewise, Property \ref{prop-UR} implies that for any point $y\in B$ the
sequence $f^{-1}(y),f^{-2}(y),f^{-3}(y),\dots$ visits $B$ and hence is not 
confined to $C$.  Let $m(y)$ denote the least positive integer $k$ such 
that $f^{-k}(y)\notin C$.  Then $f^{-m(y)}(y)\in A$.  For any integer 
$k\ge1$, let $A_k=\{x\in A\mid p(x)=k\}$ and $B_k=\{y\in B\mid m(y)=k\}$.  
The sets $A_1,A_2,A_3,\dots$ form a partition of $A$ while the sets 
$B_1,B_2,B_3,\dots$ form a partition of $B$.  By construction, 
$f^k(A_k)\subset B_k$ and $f^{-k}(B_k)\subset A_k$ for all $k$, which 
implies that $f^k(A_k)=B_k$ for all $k$.

Now we define a map $h:X\to X$ by $h(x)=f^{p(x)}(x)$ if $x\in A$, 
$h(x)=f^{-m(x)}(x)$ if $x\in B$, and $h(x)=x$ otherwise.  For any $k\ge1$ 
the map $h$ coincides with $f^k$ on $A_k$ and with $f^{-k}$ on $B_k$.  
Since $f^k(A_k)=B_k$ for all $k$, it follows that $h$ is an involution.  
Besides, $h(A)=B$.

Notice that all sets of the form $A_k$ and $B_k$ are clopen.  Indeed, $A_k$ 
is the intersection of clopen sets $A$, $f^{-k}(B)$ and $f^{-i}(C)$, $1\le 
i\le k-1$.  Likewise, $B_k$ is the intersection of clopen sets $B$, 
$f^k(A)$ and $f^i(C)$, $1\le i\le k-1$.  Since the set $A$, which is 
compact, is the disjoint union of clopen sets $A_1,A_2,A_3,\dots$, all but 
finitely many of those sets have to be empty.  Likewise, all but finitely 
many of the sets $B_1,B_2,B_3,\dots$ are empty.  As a consequence, the map 
$h$ is piecewise an element of the group $G$.  Since $h$ is invertible, we 
conclude that $h$ belongs to the group $\F(G)$, which coincides with $G$ as 
$G$ is ample.
\end{proof}

\begin{lemma}\label{UC-implies-CI}
For any ample group, Property \ref{prop-UC} implies Property \ref{prop-CI}.
\end{lemma}

\begin{proof}
Suppose a group $G\subset\Homeo(X)$ is ample and has Property 
\ref{prop-UC}.  To establish Property \ref{prop-CI}, we are going to show 
that the group $G$ satisfies conditions CI-1 and CI-2 in Lemma 
\ref{CI-equiv}.  We begin with condition CI-1.  Take any disjoint nonempty 
clopen sets $A,B,C\subset X$ such that $f(A\cup C)=B\cup C$ for some $f\in 
G$.  First consider the case when the image $f(A)$ intersects $B$.  Then 
$A\cap f^{-1}(B)$ is a nonempty clopen set.  Property \ref{prop-UC} implies 
that $g(C)\subset A\cap f^{-1}(B)$ for some $g\in G$.  Let $O_1=g(C)$ and 
$O_2=f(O_1)$.  Then $O_1$ and $O_2$ are nonempty clopen subsets of 
respectively $A$ and $B$.  In particular, $O_1$ and $O_2$ are both disjoint 
from $C$.  Therefore the generalized $2$-cycles $\de_{C;g}$ and 
$\de_{C;fg}$ are well defined.  Since the group $G$ is ample, it contains 
$\de_{C;g}$ and $\de_{C;fg}$ as well as the map $h=\de_{C;fg}f\de_{C;g}$.  
Let us find $h(A)$.  The map $\de_{C;g}$ fixes all points in $A\setminus 
O_1$ while $O_1$ is mapped onto $C$.  Hence $\de_{C;g}$ maps $A$ onto  
$(A\setminus O_1)\cup C=(A\cup C)\setminus O_1$.  Further, $f$ maps $(A\cup 
C)\setminus O_1$ onto $(B\cup C)\setminus O_2$.  Finally, $\de_{C;fg}$ 
coincides with the identity map on $B\setminus O_2$ while mapping $C$ onto 
$O_2$.  We conclude that $h(A)=B$.

Now consider the case when $f(A)$ is disjoint from $B$.  In this case, 
$f(A)\subset C$ and also $f^{-1}(B)\subset C$.  Property \ref{prop-UC} 
implies that $g_1(C)\subset A$ and $g_2(A)\subset B$ for some maps 
$g_1,g_2\in G$.  Let $O_1=g_1(C)$, $O_2=g_2(O_1)$ and $O_3=f^{-1}(O_2)$.  
Then $O_1$, $O_2$ and $O_3$ are nonempty clopen subsets of respectively 
$A$, $B$ and $C$.  In particular, $O_1$ and $O_2$ are both disjoint from 
$C$ and $O_3$.  Therefore the generalized $2$-cycles $\de_{C;g_1}$, 
$\de_{C;g_2g_1}$ and $\de_{O_1;f^{-1}g_2}$ are well defined.  The ample 
group $G$ contains all these generalized $2$-cycles as well as the map 
$h=\de_{C;g_2g_1}f\de_{O_1;f^{-1}g_2}\de_{C;g_1}$.  Let us find $h(A)$.  
The map $\de_{C;g_1}$ fixes all points in $A\setminus O_1$ while $O_1$ is 
mapped onto $C$.  Hence $\de_{C;g_1}$ maps $A$ onto $(A\cup C)\setminus 
O_1$.  Then $\de_{O_1;f^{-1}g_2}$ maps $(A\cup C)\setminus O_1$ onto 
$(A\cup C)\setminus O_3$.  Further, $f$ maps $(A\cup C)\setminus O_3$ onto 
$(B\cup C)\setminus O_2$.  Finally, $\de_{C;g_2g_1}$ coincides with the 
identity map on $B\setminus O_2$ while mapping $C$ onto $O_2$.  Thus 
$h(A)=B$.

We proceed to verification of condition CI-2.  Consider any disjoint 
nonempty clopen sets $D,E_0,E\subset X$ such that $f(D\cup E_0)=E_0$ for 
some $f\in G$.  Let $E_1=f(D)$ and $E_2=f(E_0)$.  Then $E_1$ and $E_2$ are 
nonempty clopen sets that partition $E_0$.  Property \ref{prop-UC} implies 
that $g(E_0)\subset E$ for some $g\in G$.  Let $U=g(E_0)$, $U_1=g(E_1)$ and 
$U_2=g(E_2)$.  Then $U_1$ and $U_2$ are nonempty clopen sets that partition 
$U$.  Now we define a continuous map $h:X\to X$ that is piecewise an 
element of the group $G$.  The map $h$ coincides with $gf$ on $D$, with 
$f^{-1}$ on $E_0$, with $gfg^{-1}$ on $U$, and with the identity map 
everywhere else.  We have $h(D)=g(E_1)=U_1$, $h(E_0)=D\cup E_0$ and 
$h(U)=gf(E_0)=g(E_2)=U_2$.  Since the sets $h(D)$, $h(E_0)$ and $h(U)$ form 
a partition of $D\cup E_0\cup U$, it follows that the map $h$ is invertible 
and hence a homeomorphism.  By construction, $h$ belongs to the group 
$\F(G)$, which coincides with $G$ as $G$ is ample.  Note that $h(D\cup 
U)=U_1\cup U_2=U$.  Besides, $U\subset E$ and $h$ fixes every point in 
$E\setminus U$.  Therefore $h(D\cup E)=E$.  Thus the set $D$ can be 
absorbed by $E$.
\end{proof}

\begin{lemma}\label{two-absorbable-sets}
If a nonempty clopen set $U\subset X$ is absorbable under the action of a 
group $G\subset\Homeo(X)$, then for any $g\in G$ the set $g(U)$ is 
absorbable as well.  If the group $G$ has Property \ref{prop-CI} then, 
conversely, any two absorbable sets can be mapped onto each other by 
elements of $G$.
\end{lemma}

\begin{proof}
If a clopen set $U$ can be absorbed by a clopen set $E$ under the action of 
a group $G\subset\Homeo(X)$, then for any $g\in G$ the set $g(U)$ can be 
absorbed by $g(E)$.  Indeed, $E$ is disjoint from $U$ and $f(U\cup E)=E$ 
for some $f\in G$.  Then for any $g\in G$ the clopen set $g(E)$ is disjoint 
from $g(U)$, the map $h=gfg^{-1}$ belongs to $G$, and $h\bigl(g(U)\cup 
g(E)\bigr)=hg(U\cup E)=gf(U\cup E)=g(E)$.

Now suppose the group $G$ has Property \ref{prop-CI} and $U_1,U_2\subset X$ 
are two absorbable sets.  We need to show that $h(U_1)=U_2$ for some $h\in 
G$.  First consider the case when both $U_1$ and $U_2$ can be absorbed by 
the same clopen set $E$.  In this case, $f_1(U_1\cup E)=E$ and $f_2(U_2\cup 
E)=E$ for some $f_1,f_2\in G$.  Then $f_2^{-1}f_1\in G$ and 
$f_2^{-1}f_1(U_1\cup E)=U_2\cup E$.  Since $E$ is disjoint from both $U_1$ 
and $U_2$, Property \ref{prop-CI} implies that $h(U_1)=U_2$ for some $h\in 
G$.

If one of the sets $U_1$ and $U_2$ contains the other, it follows from 
Lemma \ref{absorbed-by-any} that the smaller set can be absorbed by any 
clopen set that can absorb the larger one.  Next consider the case when 
neither of the sets $U_1$ and $U_2$ contains the other, and also $U_1\cup 
U_2\ne X$.  Then the clopen set $E=X\setminus(U_1\cup U_2)$ is nonempty and 
the unions $U_1\cup E$ and $U_2\cup E$ are both different from $X$.  
By Lemma \ref{absorbed-by-any}, both $U_1$ and $U_2$ can be absorbed by $E$.

It remains to consider the case when $U_1\cup U_2=X$.  Take any clopen set 
$E$ that can absorb $U_1$.  Then $E$ is disjoint from $U_1$ and $f(U_1\cup 
E)=E$ for some $f\in G$.  The set $U=f(U_1)$ is also absorbable.  Indeed, 
$f\bigl(U\cup(E\setminus U)\bigr)=f(E)=E\setminus U$ so that $U$ can be 
absorbed by $E\setminus U$.  Since $U$ is disjoint from $U_1$ and $U_1\cup 
U_2=X$, the set $U_2$ contains $U$.  By the above, $h(U)=U_2$ for some 
$h\in G$.  Then $hf\in G$ and $hf(U_1)=U_2$.
\end{proof}

\begin{lemma}\label{absorbable-and-CI-implies-UC}
For any ample group that acts minimally and admits an absorbable set, 
Property \ref{prop-CI} implies Property \ref{prop-UC}.
\end{lemma}

\begin{proof}
Suppose $G\in\Homeo(X)$ is an ample group that acts minimally on $X$, 
admits an absorbable set, and has Property \ref{prop-CI}.  We are going to 
show that the group $G$ has the following property, which is a weakened 
form of Property \ref{prop-SC}: for any nonempty clopen set $W\subset X$ 
there exist a clopen set $W_0\subset W$ and a map $f\in G$ such that 
$f(W_0)$ is a proper subset of $W_0$.  After that Property \ref {prop-UC} 
can be established in the same way as it was done in the proof of Lemma 
\ref{UC-equiv-M-and-SC}.

Take any nonempty clopen set $D\subset X$ absorbable under the action of 
the group $G$.  Let $E$ be any clopen set that can absorb $D$.  Then $E$ is 
disjoint from $D$ and $g(D\cup E)=E$ for some $g\in G$.  The set 
$D_1=g(D)$, which is a proper subset of $E$, is absorbable due to Lemma 
\ref{two-absorbable-sets}.  Since the group $G$ has Property \ref{prop-CI}, 
it follows from Lemma \ref{absorbed-by-any} that every nonempty clopen 
subset of $D$ can absorb $D_1$, every nonempty clopen subset of $D_1$ can 
absorb $D$, and every nonempty clopen subset of $X\setminus(D\cup D_1)$ can 
absorb both $D$ and $D_1$.  Any nonempty clopen set $W\subset X$ intersects 
at least one of the sets $D$, $D_1$ and $X\setminus(D\cup D_1)$.  Hence $W$ 
has a clopen subset $W_0$ that can absorb an absorbable set $D_0$, where 
$D_0=D$ or $D_1$.  Then $W_0$ is disjoint from $D_0$ and $f(D_0\cup 
W_0)=W_0$ for some $f\in G$.  Since $W_0$ is a proper subset of $D_0\cup 
W_0$, it follows that $f(W_0)$ is a proper subset of $W_0$.
\end{proof}

Combining Lemmas \ref{UC-implies-CI} and 
\ref{absorbable-and-CI-implies-UC}, we obtain that an ample group has 
Property \ref{prop-UC} if and only if it acts minimally, admits an 
absorbable set (or, equivalently, admits contraction of a clopen set) and 
has Property \ref{prop-CI}.

\begin{question}\label{question}
Let $G$ be an ample group that acts minimally on a Cantor set.
\begin{itemize}
\item
Suppose the group $G$ admits contraction of a clopen set.  Is it possible 
that $G$ does not have Property \ref{prop-UC}?
\item
Suppose the group $G$ does not allow contraction of any clopen set.  Is it 
possible that $G$ does not have Property \ref{prop-NC}?
\end{itemize}
\end{question}

Let $C(X,\bZ)$ denote the set of all continuous functions $\phi:X\to\bZ$.  
We regard $C(X,\bZ)$ as an additive group.  For any clopen set $U\subset X$ 
the characteristic function $\chi_U$, defined by $\chi_U(x)=1$ if $x\in U$ 
and $\chi_U(x)=0$ if $x\in X\setminus U$, belongs to $C(X,\bZ)$.  Any 
function $\phi\in C(X,\bZ)$ takes only finitely many values, and nonempty 
level sets of $\phi$ form a partition of $X$ into clopen sets.  It follows 
that the group $C(X,\bZ)$ is generated by characteristic functions of 
clopen sets.

\begin{definition}\label{def-homology}
Given a group $G\subset\Homeo(X)$, let $K_G$ denote the subgroup of 
$C(X,\bZ)$ generated by all functions of the form $\phi-\phi\circ g$, where 
$\phi\in C(X,\bZ)$ and $g\in G$.  The factor group $C(X,\bZ)/K_G$ is 
denoted $H_0(G)$ and called the zeroth \textbf{homology group} of $G$.
\end{definition}

It is not hard to show that the homology group $H_0(G)$ is preserved when 
the group $G$ is amplified.  In particular, the homology group $H_0(\F(G))$ 
of the ample group $\F(G)$ is the same as $H_0(G)$.

\begin{property}{HS}[homologous sets can be mapped onto each other] 
\label{prop-HS}
Whenever the characteristic functions of two clopen sets $A,B\subset X$ 
different from the empty set and $X$ represent the same homology class in 
$H_0(G)$, we have $g(A)=B$ for some $g\in G$.
\end{property}

If $g(A)=B$ for some clopen sets $A,B\subset X$ and $g\in G$, then 
$\chi_A-\chi_B=\chi_A-\chi_A\circ g^{-1}\in K_G$ so that $\chi_A$ and 
$\chi_B$ represent the same homology class in $H_0(G)$.  Property 
\ref{prop-HS} means that the action of the group $G$ on clopen subsets of 
$X$ has as much transitivity as the homology group $H_0(G)$ allows.

An analogue of Property \ref{prop-HS} was established by Matui for two 
classes of \'{e}tale groupoids, for \emph{almost finite} groupoids in 
\cite{Matui-homology} and for purely infinite groupoids in 
\cite{Matui-pureinf2}.  The authors are grateful to the anonymous referee 
for drawing our attention to these results.  In \cite{Matui-pureinf2} this 
property is called \emph{cancellation} (see Definition 2.11 in 
\cite{Matui-pureinf2}).  Our naming of Property \ref{prop-CI} is a tribute 
to that.

\begin{lemma}\label{HS-implies-CI}
Property \ref{prop-HS} implies Property \ref{prop-CI}.
\end{lemma}

\begin{proof}
If $A$, $B$ and $C$ are clopen subsets of $X$ such that $C$ is disjoint 
from both $A$ and $B$, then $\chi_{A\cup C}-\chi_{B\cup C}=\chi_A-\chi_B$.  
Hence for any group $G\subset\Homeo(X)$ the characteristic functions 
$\chi_{A\cup C}$ and $\chi_{B\cup C}$ represent the same homology class in 
$H_0(G)$ if and only if the characteristic functions $\chi_A$ and $\chi_B$ 
represent the same homology class in $H_0(G)$.  It follows that Property 
\ref{prop-HS} implies Property \ref{prop-CI}.
\end{proof}

It seems plausible that for an ample group, Properties \ref{prop-CI} and 
\ref{prop-HS} are equivalent.  However, the authors are not able to prove 
this yet.

\begin{property}{E}[entanglement]\label{prop-E}
Whenever clopen sets $U_1$ and $U_2$ intersect, the local subgroup 
$\F_{U_1\cup U_2}(G)$ is generated by the union of local subgroups 
$\F_{U_1}(G)\cup\F_{U_2}(G)$.
\end{property}

Along with minimality, Property \ref{prop-E} will be very important for us 
in Section \ref{maxsub} below.   The other properties introduced in this 
section are relevant only because they can help to establish Property 
\ref{prop-E}.  We defer a detailed discussion of Property \ref{prop-E} to 
Section \ref{property-E} below.

\section{Maximal subgroups of ample groups}\label{maxsub}

In this section we study maximal subgroups of an ample group 
$\cG\subset\Homeo(X)$, where the topological space $X$ is compact, 
metrizable, and totally disconnected.  We give examples of such subgroups 
and obtain some partial results on their classification.  Most of the time 
we require the group $\cG$ to act minimally on $X$.  Many results also 
require Property \ref{prop-E} (see Section \ref{prop}).

\begin{definition}
A subgroup $H$ of a group $G$ is called \textbf{maximal} if $H\ne G$ and 
there is no subgroup $K$ such that $H\subset K\subset G$ while $K\ne H$ and 
$K\ne G$. 
\end{definition}

The first class of subgroups to look for maximal subgroups are the 
stabilizers of closed sets (see Definition \ref{def-stab-set}).  Since 
$\St_{\cG}(Y)=\St_{\cG}(X\setminus Y)$ for any set $Y\subset X$, they are 
also the stabilizers of open sets.

\begin{lemma}\label{stab-closed1}
Let $\cG\subset\Homeo(X)$ be an ample group that acts minimally on $X$.  
Suppose $Y$ is a closed subset of $X$.  Then $\St_{\cG}(Y)\ne\cG$ unless 
$Y$ is the empty set or $Y=X$.
\end{lemma}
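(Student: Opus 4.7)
The plan is to argue directly using minimality. Suppose $Y$ is a nonempty proper closed subset of $X$; I need to produce an element $g \in \cG$ that does not preserve $Y$, i.e., with $g(Y) \neq Y$.

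First I would pick an arbitrary point $y \in Y$ (which exists since $Y$ is nonempty). By Lemma \ref{minimality}, minimality of the action of $\cG$ on $X$ is equivalent to every orbit being dense in $X$, so $\Orb_{\cG}(y)$ is dense. Since $Y$ is a proper closed subset of $X$, the complement $X \setminus Y$ is a nonempty open set. Density of $\Orb_{\cG}(y)$ then gives some $g \in \cG$ with $g(y) \in X \setminus Y$.

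Finally, I would observe that $g(y) \notin Y$ immediately implies $g(Y) \neq Y$, since $g(y)$ is an element of $g(Y)$ not lying in $Y$. Thus $g \notin \St_{\cG}(Y)$, so $\St_{\cG}(Y)$ is a proper subgroup of $\cG$.

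There is no real obstacle here — the assumption that $\cG$ is ample is not used, only minimality, and the argument is a one-line application of Lemma \ref{minimality}. The only thing to be careful about is the distinction between the set-stabilizer condition $g(Y) = Y$ and the pointwise condition, but since finding a single $y \in Y$ with $g(y) \notin Y$ already breaks $g(Y) \subseteq Y$, the conclusion follows at once.
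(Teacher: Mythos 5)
Your argument is correct and is essentially identical to the paper's proof: both pick a point of $Y$, use Lemma \ref{minimality} to get density of its orbit, and find $g\in\cG$ sending it into the nonempty open set $X\setminus Y$, whence $g\notin\St_{\cG}(Y)$. Your remark that ampleness is not needed here is also accurate.
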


\begin{proof}
Clearly, the entire group $\cG$ is the stabilizer of $X$ and of the empty 
set.  If $Y$ is not empty, we can find a point $x\in Y$.  Since the group 
$\cG$ acts minimally on $X$, the orbit $\Orb_{\cG}(x)$ is dense in $X$.  If 
$Y\ne X$ then the open set $X\setminus Y$ is not empty.  Hence 
$\Orb_{\cG}(x)$ has a point in $X\setminus Y$.  This means that $g(x)\notin 
Y$ for some $g\in\cG$.  Then $g\notin\St_{\cG}(Y)$ so that 
$\St_{\cG}(Y)\ne\cG$.
\end{proof}

The following lemma and its proof describe a general construction that will 
be used repeatedly throughout this section.

\begin{lemma}\label{transposition-mimic}
Let $\cG\subset\Homeo(X)$ be an ample group.  Suppose $x$ and $y$ are two 
different points in the same orbit of $\cG$ and $Z\subset X$ is a closed 
set that contains neither $x$ nor $y$.  Then there exists a map $f\in\cG$ 
such that $f(x)=y$, $f(y)=x$, and $f(z)=z$ for all $z\in Z$.
\end{lemma}

\begin{proof}
For any point $z\in Z$ we can find a clopen neighborhood $U_z$ that 
contains neither $x$ nor $y$.  Since the set $Z$ is closed and hence 
compact, it can be covered by finitely many clopen sets of the form $U_z$, 
$z\in Z$.  Taking the union of all sets in that finite cover, we obtain a 
clopen set $U$ containing $Z$ and disjoint from $\{x,y\}$.  Next, we can 
find disjoint clopen sets $V_x,V_y\subset X\setminus U$ such that $x\in 
V_x$ and $y\in V_y$.  Since the points $x$ and $y$ are in the same orbit of 
the group $\cG$, we have $y=g(x)$ for some $g\in\cG$.  Then $W=V_x\cap 
g^{-1}(V_y)$ is a clopen neighborhood of $x$.  The set $W$ is disjoint from 
$g(W)$ as $W\subset V_x$ while $g(W)\subset V_y$.  Therefore the 
generalized $2$-cycle $f=\de_{W;g}$ is defined.  Since $g\in\cG$, the map 
$f$ belongs to the group $\F(\cG)=\cG$.  By construction, $f(x)=g(x)=y$ and 
$f(y)=g^{-1}(y)=x$.  All points in the complement of the set $W\cup g(W)$ 
are fixed by $f$.  Since $W\cup g(W)\subset V_x\cup V_y\subset X\setminus 
U$ and $Z\subset U$, it follows that $f(z)=z$ for all $z\in Z$.
\end{proof}

\begin{lemma}\label{stab-closed2}
Let $\cG\subset\Homeo(X)$ be an ample group that acts minimally on $X$.  
Suppose $Y_1$ and $Y_2$ are distinct closed subsets of $X$.  Then 
$\St_{\cG}(Y_1)=\St_{\cG}(Y_2)$ if and only if $Y_1\cup Y_2=X$ and $Y_1\cap 
Y_2=\partial Y_1\cap\partial Y_2$.  Equivalent conditions are that 
$Y_1=\overline{X\setminus Y_2}$ and $Y_2=\overline{X\setminus Y_1}$.
\end{lemma}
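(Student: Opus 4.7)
The plan is to first verify that the two formulations in the statement are equivalent, then establish the easy implication, and finally tackle the main direction by contrapositive. Since each $Y_i$ is closed, $\overline{X\setminus Y_i}=X\setminus\mathrm{int}(Y_i)$; hence $Y_1=\overline{X\setminus Y_2}$ amounts to $Y_1\cup Y_2=X$ together with $\mathrm{int}(Y_2)\cap Y_1=\emptyset$. Combined with the symmetric identity for $Y_2=\overline{X\setminus Y_1}$, both formulations reduce to the single list $Y_1\cup Y_2=X$ and $\mathrm{int}(Y_1)\cap Y_2=\mathrm{int}(Y_2)\cap Y_1=\emptyset$, which (using $\partial Y_i=Y_i\setminus\mathrm{int}(Y_i)$ for closed $Y_i$) is exactly $Y_1\cap Y_2=\partial Y_1\cap\partial Y_2$, proving equivalence of the two forms. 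The implication from these conditions to $\St_\cG(Y_1)=\St_\cG(Y_2)$ is immediate: homeomorphisms commute with complements and closures, so $g(Y_1)=Y_1$ iff $g(X\setminus Y_1)=X\setminus Y_1$ iff $g(\overline{X\setminus Y_1})=\overline{X\setminus Y_1}$, i.e.\ $g(Y_2)=Y_2$.

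For the converse, assume $\St_{\cG}(Y_1)=\St_{\cG}(Y_2)$ with $Y_1\ne Y_2$. Edge cases $Y_i\in\{\emptyset,X\}$ are handled by Lemma \ref{stab-closed1}, so assume both sets are proper nonempty. To show $Y_1\cup Y_2=X$, suppose not and set $V=X\setminus(Y_1\cup Y_2)$, nonempty open; by symmetry pick $p\in Y_1\setminus Y_2$. Choose a clopen neighborhood $U'\ni p$ disjoint from $Y_2$ (using total disconnectedness). By minimality (Lemma \ref{minimality}), the orbit of $p$ meets $V$, so $q=h(p)\in V$ for some $h\in\cG$. After separating the distinct points $p,q$ by disjoint clopen sets, shrinking $U'$, and choosing a clopen neighborhood $V''\subset V$ of $q$ accordingly, set $U=U'\cap h^{-1}(V'')$; this yields a clopen $U\ni p$ with $h(U)\subset V''$, $U\cap h(U)=\emptyset$, and $U\cup h(U)\subset X\setminus Y_2$. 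Then the generalized $2$-cycle $g=\de_{U;h}$ is defined and lies in $\F(\cG)=\cG$ by ampleness; its support lies in $X\setminus Y_2$, so $g\in\St_\cG(Y_2)$, yet $g(p)=q\notin Y_1$ shows $g\notin\St_\cG(Y_1)$, a contradiction.

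With $Y_1\cup Y_2=X$ established, the remaining conditions follow cleanly from Lemma \ref{stab-any2}: since $X\setminus Y_1$ is now an open subset of $Y_2$, we have $X\setminus Y_1\subset\mathrm{int}(Y_2)$, and any hypothetical $x\in\mathrm{int}(Y_2)\cap Y_1$ would (by density of orbits) admit an orbit companion $y$ in the nonempty open set $X\setminus Y_1\subset\mathrm{int}(Y_2)$; the pair $x\in Y_1$, $y\notin Y_1$, both in $\mathrm{int}(Y_2)$, then contradicts $\St_\cG(Y_1)=\St_\cG(Y_2)$ via Lemma \ref{stab-any2}. A symmetric argument gives $\mathrm{int}(Y_1)\cap Y_2=\emptyset$. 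The main obstacle is the $2$-cycle construction in the middle step, where ampleness, density of orbits under minimality, and total disconnectedness of $X$ must be combined to produce a single element of $\cG$ that fixes $Y_2$ pointwise while displacing a point of $Y_1$ outside of $Y_1$; once such an element is produced, the remaining reduction to Lemma \ref{stab-any2} is straightforward.
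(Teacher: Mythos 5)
Your proof is correct and follows essentially the same route as the paper: the same equivalence of the two formulations, the same easy direction via complements and closures, and the same two-case analysis for the converse (first $Y_1\cup Y_2\ne X$, then an interior point of one set lying in the other), each resolved by a generalized $2$-cycle supported away from one of the sets. The only cosmetic difference is that in the first case you inline the $2$-cycle construction, whereas the paper simply applies Lemma \ref{stab-any2} to $Y_1$ and the open set $X\setminus Y_2$ (using $\St_{\cG}(X\setminus Y_2)=\St_{\cG}(Y_2)$), which is the same idea packaged as a lemma.
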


\begin{proof}
Recall that $\St_{\cG}(X\setminus E)=\St_{\cG}(E)$ for any set $E\subset 
X$.  If $f:X\to X$ is a homeomorphism, then 
$f\bigl(\overline{E}\bigr)=\overline{f(E)}$.  Hence 
$f\bigl(\overline{E}\bigr)=\overline{E}$ whenever $f(E)=E$.  As a 
consequence, $\St_{\cG}(E)\subset\St_{\cG}\bigl(\overline{E}\bigr)$.

Assume that sets $Y_1,Y_2\subset X$ satisfy conditions 
$Y_1=\overline{X\setminus Y_2}$ and $Y_2=\overline{X\setminus Y_1}$.  By 
the above, $\St_{\cG}(Y_2)=\St_{\cG}(X\setminus Y_2)\subset 
\St_{\cG}\bigl(\overline{X\setminus Y_2}\bigr)=\St_{\cG}(Y_1)$.  Likewise,
$\St_{\cG}(Y_1)\subset\St_{\cG}(Y_2)$.  Hence 
$\St_{\cG}(Y_1)=\St_{\cG}(Y_2)$.  Further, either of the two assumed 
conditions implies that $Y_1\cup Y_2=X$.  The first condition also implies 
that no interior point of $Y_2$ belongs to $Y_1$.  Likewise, the 
second of the two implies that no interior point of $Y_1$ belongs to 
$Y_2$.  Therefore any common point of $Y_1$ and $Y_2$ is their common 
boundary point: $Y_1\cap Y_2\subset\partial Y_1\cap\partial Y_2$.  Since 
the sets $Y_1$ and $Y_2$ are clearly closed, they contain their own 
boundaries so that $Y_1\cap Y_2=\partial Y_1\cap\partial Y_2$.  Conversely, 
assume $Y_1,Y_2\subset X$ are closed sets such that $Y_1\cup Y_2=X$ and 
$Y_1\cap Y_2=\partial Y_1\cap\partial Y_2$.  Then $X$ is a disjoint union 
of three sets $Y_1\setminus Y_2$, $Y_2\setminus Y_1$ and $Y_1\cap Y_2$.  
Moreover, the sets $Y_1\setminus Y_2=X\setminus Y_2$ and $Y_2\setminus Y_1= 
X\setminus Y_1$ are open.  It follows that $Y_1\setminus Y_2$ and 
$Y_2\setminus Y_1$ are disjoint from both $\partial Y_1$ and $\partial 
Y_2$.  Hence $\partial Y_1\cup\partial Y_2\subset Y_1\cap Y_2$, which 
implies that $\partial Y_1=\partial Y_2=Y_1\cap Y_2$.  Since $\partial 
Y_1=\partial(X\setminus Y_1)=\partial(Y_2\setminus Y_1)$, we obtain that 
$\overline{X\setminus Y_1}=\overline{Y_2\setminus Y_1}=(Y_2\setminus 
Y_1)\cup\partial(Y_2\setminus Y_1)=(Y_2\setminus Y_1)\cup(Y_1\cap 
Y_2)=Y_2$.  Likewise, $\overline{X\setminus Y_2}=Y_1$.

It remains to show that $\St_{\cG}(Y_1)\ne\St_{\cG}(Y_2)$ for any distinct 
closed sets $Y_1,Y_2\subset X$ that do not satisfy at least one of the 
conditions $Y_1\cup Y_2=X$ and $Y_1\cap Y_2=\partial Y_1\cap\partial Y_2$.  
First we consider the case when $Y_1\cup Y_2\ne X$.  Then $U=X\setminus 
(Y_1\cup Y_2)$ is a nonempty open set.  Since $Y_1\ne Y_2$, there is a 
point $x\in X$ that belongs to one of these sets but not to the other.  We 
may assume without loss of generality that $x\in Y_1$ and $x\notin Y_2$.  
Since the group $\cG$ acts minimally on $X$, the orbit $\Orb_{\cG}(x)$ is 
dense in $X$.  In particular, it has a point $y$ in $U$.  Clearly, $y\notin 
Y_1$ and $y\notin Y_2$.  By Lemma \ref{transposition-mimic}, there exists a 
map $f\in\cG$ such that $f(x)=y$ and $f(z)=z$ for all $z\in Y_2$.  Then 
$f\in\Stp_{\cG}(Y_2)\subset\St_{\cG}(Y_2)$.  On the other hand, 
$f\notin\St_{\cG}(Y_1)$ as $x\in Y_1$ while $y\notin Y_1$.  Therefore 
$\St_{\cG}(Y_1)\ne\St_{\cG}(Y_2)$.

Now consider the case when $Y_1\cup Y_2=X$ but there exists a point $x\in 
Y_1\cap Y_2$ that is not a boundary point for at least one of the sets 
$Y_1$ and $Y_2$.  We may assume without loss of generality that $x$ is an 
interior point of $Y_2$.  Furthermore, we may assume that $Y_1\ne X$ as 
otherwise $\St_{\cG}(Y_2)\ne\cG=\St_{\cG}(Y_1)$ due to Lemma 
\ref{stab-closed1}.  Then $X\setminus Y_1$ is a nonempty open set.  Just as 
above, minimality of the action of the group $\cG$ on $X$ implies that the 
orbit $\Orb_{\cG}(x)$ has a point $y$ in $X\setminus Y_1$.  Note that 
$X\setminus Y_1\subset Y_2$ since $Y_1\cup Y_2=X$.  Therefore every point 
of $X\setminus Y_1$ (including $y$) is an interior point of $Y_2$.  Hence 
the closed set $\overline{X\setminus Y_2}$ contains neither $x$ nor $y$.  
By Lemma \ref{transposition-mimic}, there exists a map $f\in\cG$ such that 
$f(x)=y$ and $f(z)=z$ for all $z\in\overline{X\setminus Y_2}$.  Then 
$f\in\St_{\cG}(X\setminus Y_2)=\St_{\cG}(Y_2)$.  On the other hand, 
$f\notin\St_{\cG}(Y_1)$ as $x\in Y_1$ while $y\notin Y_1$.  Thus 
$\St_{\cG}(Y_1)\ne\St_{\cG}(Y_2)$.
\end{proof}

\begin{lemma}\label{stab-closed3}
Let $\cG\subset\Homeo(X)$ be an ample group that acts minimally on $X$.  
Suppose $Y_1$ and $Y_2$ are nonempty closed subsets of $X$ such that 
$Y_1\subset Y_2$ and $Y_1\ne Y_2$.  Then $\St_{\cG}(Y_1)\ne\St_{\cG}(Y_2)$.
\end{lemma}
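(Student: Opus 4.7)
The plan is to reduce the lemma to an application of Lemma \ref{stab-any2} via the identity $\St_{\cG}(Y)=\St_{\cG}(X\setminus Y)$, treating a trivial edge case separately. Choose a point $y\in Y_2\setminus Y_1$, which exists because $Y_1\subsetneq Y_2$.

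If $Y_2=X$, then $\St_{\cG}(Y_2)=\cG$, and since $Y_1$ is nonempty and strictly contained in $Y_2=X$, Lemma \ref{stab-closed1} gives $\St_{\cG}(Y_1)\ne\cG=\St_{\cG}(Y_2)$, which settles this case.

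Assume henceforth that $Y_2\ne X$. Then $X\setminus Y_2$ is a nonempty open set, so by minimality of the $\cG$-action (Lemma \ref{minimality}) the orbit $\Orb_{\cG}(y)$ is dense in $X$ and therefore meets $X\setminus Y_2$ at some point $z$. The two points $y$ and $z$ lie in the same $\cG$-orbit; moreover $y\in Y_2$ while $z\notin Y_2$. Since $Y_1\subset Y_2$, the point $z$ also lies outside $Y_1$, and by choice $y\notin Y_1$ either, so both $y$ and $z$ belong to the open set $X\setminus Y_1$, hence are interior points of $X\setminus Y_1$.

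Now apply Lemma \ref{stab-any2} with the sets $Y_2$ (containing $y$, missing $z$) and $X\setminus Y_1$ (an open set containing both) to conclude $\St_{\cG}(Y_2)\ne\St_{\cG}(X\setminus Y_1)$. Since $\St_{\cG}(X\setminus Y_1)=\St_{\cG}(Y_1)$, this gives $\St_{\cG}(Y_1)\ne\St_{\cG}(Y_2)$, completing the proof. There is no serious obstacle here — the only subtlety is remembering to handle $Y_2=X$ separately, because in that case the open set $X\setminus Y_2$ is empty and the orbit argument would fail.
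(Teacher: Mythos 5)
Your proof is correct. The edge case $Y_2=X$ is handled exactly as in the paper (via Lemma \ref{stab-closed1}). For the main case the paper simply observes that $Y_1\cup Y_2=Y_2\ne X$ and cites Lemma \ref{stab-closed2}, whose characterization of when two closed sets have equal stabilizers immediately fails here; you instead bypass Lemma \ref{stab-closed2} and argue directly from Lemma \ref{stab-any2}, picking $y\in Y_2\setminus Y_1$, using minimality to find an orbit point $z\in X\setminus Y_2$, and applying Lemma \ref{stab-any2} to the pair $Y_2$ and $X\setminus Y_1$ together with the identity $\St_{\cG}(X\setminus Y_1)=\St_{\cG}(Y_1)$. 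This is in substance the same argument the paper uses \emph{inside} the proof of Lemma \ref{stab-closed2} (case $Y_1\cup Y_2\ne X$), just inlined and specialized to the nested situation; the paper's version is shorter because it reuses the already-proved characterization, while yours is self-contained modulo Lemma \ref{stab-any2} and makes the mechanism visible. All steps check out: $z\notin Y_1$ because $Y_1\subset Y_2$, and both $y$ and $z$ are interior points of the open set $X\setminus Y_1$, so the hypotheses of Lemma \ref{stab-any2} are met.
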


\begin{proof}
If $Y_2=X$ then $\St_{\cG}(Y_1)\ne\cG=\St_{\cG}(Y_2)$ due to Lemma 
\ref{stab-closed1}.  Otherwise $Y_1\cup Y_2=Y_2\ne X$.  Then 
$\St_{\cG}(Y_1)\ne\St_{\cG}(Y_2)$ due to Lemma \ref{stab-closed2}.
\end{proof}

Now we are ready to establish our first result on the classification of 
maximal subgroups of ample groups.

\begin{theorem}\label{maxsub-nonminimal}
Let $\cG\subset\Homeo(X)$ be an ample group that acts minimally on $X$.  
Suppose $H$ is a maximal subgroup of $\cG$ that does not act minimally on 
$X$.  Then $H=\St_{\cG}(Y)$ for some closed set $Y\subset X$ different from 
the empty set and $X$.  Furthermore, if the stabilizer $\St_{\cG}(Y)$ of a 
closed set $Y\subset X$ is indeed a maximal subgroup of $\cG$, then the 
induced action of $\St_{\cG}(Y)$ on $Y$ is minimal.
\end{theorem}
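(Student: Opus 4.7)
The plan is to derive both assertions as short consequences of Lemmas \ref{stab-closed1} and \ref{stab-closed3}, which have already done the real work. In each direction the argument is just order-monotonicity of the stabilizer map $Y\mapsto\St_{\cG}(Y)$ combined with one of those lemmas.

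For the first assertion, I would begin by unpacking the hypothesis that $H$ fails to act minimally on $X$: there is a closed set $Y\subset X$ with $Y\ne\emptyset$ and $Y\ne X$ such that $h(Y)\subset Y$ for every $h\in H$. Since $H$ is a group, applying the condition to $h^{-1}$ upgrades this to $h(Y)=Y$, so $H\subset\St_{\cG}(Y)$. Lemma \ref{stab-closed1}, applied to the minimally acting ample group $\cG$ and the proper nonempty closed set $Y$, gives $\St_{\cG}(Y)\ne\cG$. Thus $H\subset\St_{\cG}(Y)\subsetneq\cG$, and maximality of $H$ forces $H=\St_{\cG}(Y)$.

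For the second assertion, I would argue by contradiction. Assume $\St_{\cG}(Y)$ is maximal in $\cG$ yet its induced action on $Y$ is not minimal. Then there exists a closed $\St_{\cG}(Y)$-invariant subset $Y'\subset Y$ with $Y'\ne\emptyset$ and $Y'\ne Y$. Since $Y'\subset Y\subsetneq X$, the set $Y'$ is itself a proper nonempty closed subset of $X$, so Lemma \ref{stab-closed1} gives $\St_{\cG}(Y')\ne\cG$. The $\St_{\cG}(Y)$-invariance of $Y'$ yields the inclusion $\St_{\cG}(Y)\subset\St_{\cG}(Y')$, and Lemma \ref{stab-closed3}, applied to the strict inclusion $Y'\subsetneq Y$ of nonempty closed sets, promotes this to $\St_{\cG}(Y)\subsetneq\St_{\cG}(Y')$. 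Therefore $\St_{\cG}(Y)\subsetneq\St_{\cG}(Y')\subsetneq\cG$, contradicting maximality.

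There is no real obstacle at this stage: the nontrivial content was already packaged into Lemma \ref{stab-closed3}, whose proof required minimality of $\cG$ in order to move points around via generalized $2$-cycles (so that distinct nonempty closed subsets in an inclusion can always be separated by some element of $\cG$). Once that lemma is available, the theorem reduces to a two-step chain-of-subgroups argument in each direction, and no further properties of ample groups (beyond minimality, which is already assumed) need to be invoked.
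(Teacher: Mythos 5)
Your proposal is correct and follows essentially the same route as the paper: both parts rest on the inclusion $H\subset\St_{\cG}(Y)$ together with Lemma \ref{stab-closed1} and, for the second assertion, Lemma \ref{stab-closed3}; your contradiction phrasing of the second part is just the contrapositive of the paper's direct argument (which concludes $H=\St_{\cG}(Y_0)$ from maximality and then $Y_0=Y$ from Lemma \ref{stab-closed3}). The only point worth making explicit is that $Y\ne X$ and $Y\ne\emptyset$ follow from $\St_{\cG}(Y)\ne\cG$, which you use implicitly when writing $Y\subsetneq X$.
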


\begin{proof}
Since the group $H$ does not act minimally on $X$, it admits a closed 
invariant set $Y$ different from the empty set and $X$.  We have $h(Y)=Y$ 
for any $h\in H$.  Hence $H\subset\St_{\cG}(Y)$.  Lemma \ref{stab-closed1} 
implies that $\St_{\cG}(Y)\ne\cG$.  As $H$ is a maximal subgroup of $\cG$, 
it follows that $H=\St_{\cG}(Y)$.  

Now assume that a group $H=\St_{\cG}(Y)$, where $Y$ is a closed subset of 
$X$, is indeed a maximal subgroup of $\cG$.  To prove that the induced 
action of $H$ on $Y$ is minimal, we need to show that any closed set 
$Y_0\subset Y$ invariant under it coincides with the empty set or $Y$.  We 
have $h(Y_0)=Y_0$ for any $h\in H$.  Hence $H\subset\St_{\cG}(Y_0)$.  Note 
that $Y\ne X$ since $H\ne\cG$.  Then $Y_0\ne X$ as well.  Assume that 
$Y_0\ne\emptyset$.  Just as above, Lemma \ref{stab-closed1} implies that 
$\St_{\cG}(Y_0)\ne\cG$ and then maximality of $H$ implies that 
$H=\St_{\cG}(Y_0)$.  Since $\St_{\cG}(Y_0)=\St_{\cG}(Y)$ and $Y_0\subset 
Y$, it follows from Lemma \ref{stab-closed3} that $Y_0=Y$.
\end{proof}

The assumption that the stabilizer $\St_{\cG}(Y)$ of a closed set $Y$ acts 
minimally when restricted to $Y$ has implications for the set $Y$ and for 
the group $\St_{\cG}(Y)$.

\begin{lemma}\label{minim-nowhere-dense}
Let $G$ be a subgroup of $\Homeo(X)$.  Suppose $Y$ is a closed subset of 
$X$ such that the stabilizer $\St_G(Y)$ acts minimally when restricted to 
$Y$.  Then the set $Y$ is either clopen or nowhere dense in $X$.
\end{lemma}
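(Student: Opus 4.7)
The plan is to examine the topological boundary of $Y$ in $X$ and show that, by minimality, this boundary must be either empty or all of $Y$. Specifically, I would consider the set $\partial Y = Y \setminus \mathrm{int}_X(Y)$, where $\mathrm{int}_X(Y)$ denotes the interior of $Y$ in the ambient space $X$. Since $Y$ is closed in $X$, the set $\partial Y$ is closed in $X$, and being a subset of $Y$, it is also closed in $Y$ with the subspace topology.

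The key observation is that $\partial Y$ is invariant under $\St_G(Y)$. Every element $g \in \St_G(Y)$ is a homeomorphism of $X$ satisfying $g(Y) = Y$, which automatically yields $g(X \setminus Y) = X \setminus Y$. Because homeomorphisms preserve the interior operation, it follows that $g(\mathrm{int}_X(Y)) = \mathrm{int}_X(Y)$ and therefore $g(\partial Y) = \partial Y$.

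By the hypothesis that the induced action of $\St_G(Y)$ on $Y$ is minimal, the closed invariant subset $\partial Y$ of $Y$ must equal either $\emptyset$ or $Y$ itself. In the first case, $Y = \mathrm{int}_X(Y)$, so $Y$ is open in $X$; combined with being closed, this makes $Y$ clopen. In the second case, $\mathrm{int}_X(Y) = \emptyset$, and since $Y$ coincides with its own closure, this is precisely the statement that $Y$ is nowhere dense in $X$. There is no substantial obstacle in this proof — the entire argument reduces to applying minimality to the canonical closed invariant subset $\partial Y$, and the totally-disconnected hypothesis on $X$ plays no role in this particular lemma.
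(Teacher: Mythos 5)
Your proof is correct and follows essentially the same route as the paper: both arguments observe that $\partial Y$ is a closed subset of $Y$ invariant under $\St_G(Y)$ (because homeomorphisms of $X$ preserving $Y$ preserve its interior and boundary), and then apply minimality to conclude $\partial Y=\emptyset$ or $\partial Y=Y$. Nothing further is needed.
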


\begin{proof}
We have $h(Y)=Y$ for any $h\in\St_G(Y)$.  Since $h$ is a homeomorphism of 
the ambient space $X$, it maps interior points of the set $Y$ to its 
interior points and boundary points of $Y$ to its boundary points.  
Therefore the boundary $\partial Y$, which is a closed subset of $Y$, is 
invariant under the induced action of the group $\St_G(Y)$ on $Y$.  As the 
latter action is minimal, the boundary $\partial Y$ has to coincide with 
the empty set or $Y$.  If $\partial Y=\emptyset$ then the set $Y$ is 
clopen.  If $\partial Y=Y$ then $Y$ is nowhere dense in $X$.
\end{proof}

\begin{lemma}\label{minim-clopen}
Let $\cG\subset\Homeo(X)$ be an ample group.  Suppose $U$ is a nonempty 
clopen subset of $X$.  Then any orbit of the induced action of the 
stabilizer $\St_{\cG}(U)$ on $U$ is the intersection of an orbit of $\cG$ 
with $U$.  As a consequence, $\St_{\cG}(U)$ acts minimally on $U$ whenever 
$\cG$ acts minimally on $X$.
\end{lemma}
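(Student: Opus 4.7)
The plan is to show the orbit identity $\Orb_{\St_{\cG}(U)}(x) = \Orb_{\cG}(x) \cap U$ for every $x \in U$, and then deduce minimality by density. One inclusion is essentially free: any element of $\St_{\cG}(U)$ is in $\cG$ and preserves $U$, so $\Orb_{\St_{\cG}(U)}(x) \subseteq \Orb_{\cG}(x) \cap U$.

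For the nontrivial inclusion, fix $x,y\in U$ with $y=g(x)$ for some $g\in\cG$. If $x=y$ take $\id_X$; otherwise I would separate $x$ and $y$ by disjoint clopen neighborhoods $V_1,V_2\subset U$ (using that $U$ is clopen and $X$ is totally disconnected) and then shrink $V_1$ to $W=V_1\cap g^{-1}(V_2)$. Then $W$ is a clopen neighborhood of $x$ with $g(W)\subset V_2$, so $W$ and $g(W)$ are disjoint and the generalized $2$-cycle $\de_{W;g}$ from Definition \ref{def-2-cycle} is defined. Because $\cG$ is ample and $g\in\cG$, this $\de_{W;g}$ lies in $\F(\cG)=\cG$. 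Its support is contained in $W\cup g(W)\subset V_1\cup V_2\subset U$, so $\de_{W;g}$ fixes every point outside $U$; in particular it belongs to $\St_{\cG}(U)$. Finally $\de_{W;g}(x)=g(x)=y$, giving the required element of $\St_{\cG}(U)$ sending $x$ to $y$.

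For the consequence, suppose $\cG$ acts minimally on $X$. Then for any $x\in U$ the orbit $\Orb_{\cG}(x)$ is dense in $X$ by Lemma \ref{minimality}, hence $\Orb_{\cG}(x)\cap U$ is dense in $U$ (any open subset of $U$ is open in $X$ and therefore meets the orbit). By the orbit identity just established, $\Orb_{\St_{\cG}(U)}(x)$ is dense in $U$, so a second application of Lemma \ref{minimality} gives minimality of the induced action on $U$.

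There is no real obstacle here; the only point that needs care is producing the element of $\St_{\cG}(U)$, and that is exactly the purpose of the generalized $2$-cycle construction, which keeps the support inside the prescribed clopen set $U$ while still realizing the action of $g$ at the single point $x$.
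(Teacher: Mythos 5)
Your proof is correct and follows essentially the same route as the paper: the easy inclusion, then realizing $y=g(x)$ inside $\St_{\cG}(U)$ by a generalized $2$-cycle $\de_{W;g}$ supported in a clopen $W\cup g(W)\subset U$, and finally deducing minimality from density of $\cG$-orbits intersected with the clopen set $U$. No gaps.
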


\begin{proof}
The induced action of the group $H=\St_{\cG}(U)$ on $U$ is the restriction 
of its action on $X$.  Therefore every orbit of the induced action is of 
the form $\Orb_H(x)$, where $x\in U$.  We need to show that 
$\Orb_H(x)=\Orb_{\cG}(x)\cap U$.  The inclusion $\Orb_H(x)\subset 
\Orb_{\cG}(x)\cap U$ is obvious.  Conversely, take any point 
$y\in\Orb_{\cG}(x)\cap U$ different from $x$.  By Lemma 
\ref{transposition-mimic}, there exists a map $f\in\cG$ such that $f(x)=y$ 
and $f$ coincides with the identity map on the clopen set $X\setminus U$.  
Then $f\in\RiSt_{\cG}(U)\subset H$.  Thus $y\in\Orb_H(x)$.

Assuming the group $\cG$ acts minimally on $X$, every orbit of $\cG$ is 
dense in $X$.  Since $U$ is a clopen set, it follows that the intersection 
of any orbit of $\cG$ with $U$ is dense in $U$.  By the above this means 
that every orbit of the induced action of the group $H$ on $U$ is dense in 
$U$.  Hence the latter action is minimal.
\end{proof}

\begin{lemma}\label{same-orbits}
Let $\cG\subset\Homeo(X)$ be an ample group.  Suppose $Y$ is a closed 
subset of $X$ such that the stabilizer $\St_{\cG}(Y)$ acts minimally when 
restricted to $Y$.  Then for any $g\in\cG\setminus\St_{\cG}(Y)$ the group 
$\langle\St_{\cG}(Y)\cup\{g\}\rangle$ generated by $\St_{\cG}(Y)$ and $g$ 
has the same orbits as $\cG$.
\end{lemma}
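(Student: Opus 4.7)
Setting $H = \St_\cG(Y)$ and $K = \langle H \cup \{g\}\rangle$, we have $H \subsetneq K \subseteq \cG$ and hence $\Orb_K(x) \subseteq \Orb_\cG(x)$ for every $x \in X$. The plan is to establish the reverse inclusion by showing that each $\cG$-orbit $O$ is a single $K$-orbit, via two intermediate claims.

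The first claim I would prove is that $H$ itself acts transitively on $O \cap (X \setminus Y)$ for every $\cG$-orbit $O$. Given distinct $x_1, x_2 \in X \setminus Y$ in the same $\cG$-orbit, I pick $\tilde g \in \cG$ with $\tilde g(x_1) = x_2$ and, using total disconnectedness of $X$, find disjoint clopen neighborhoods $U_1 \ni x_1$ and $U_2 \ni x_2$ both contained in the open set $X \setminus Y$. Setting $W = U_1 \cap \tilde g^{-1}(U_2)$, the generalized $2$-cycle $\de_{W;\tilde g}$ is defined, lies in $\F(\cG) = \cG$ by ampleness, has support inside $W \cup \tilde g(W) \subseteq X \setminus Y$ and thus fixes $Y$ pointwise (so lies in $H$), and sends $x_1$ to $x_2$.

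The second claim is that for every $y \in Y$ there exists $k \in K$ with $k(y) \in X \setminus Y$. Since $g \notin H$ we have $g(Y) \ne Y$; possibly replacing $g$ by $g^{-1} \in K$ (which handles the case $g(Y) \subsetneq Y$ via $g^{-1}(Y) \supsetneq Y$), I may assume $g(Y) \not\subseteq Y$ and fix $y_0 \in Y$ with $g(y_0) \notin Y$. Choosing a clopen neighborhood $V$ of $g(y_0)$ disjoint from $Y$ and setting $W = g^{-1}(V)$, the set $W$ is a clopen neighborhood of $y_0$ with $g(W) \subseteq X \setminus Y$, while $W \cap Y$ is nonempty and relatively open in $Y$. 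Since $H$ acts minimally on $Y$, Lemma \ref{minimality} gives that the $H$-orbit of every $y \in Y$ is dense in $Y$ and therefore meets $W \cap Y$, so $h(y) \in W$ for some $h \in H$; then $gh(y) \in g(W) \subseteq X \setminus Y$ with $gh \in K$.

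To combine, fix any $\cG$-orbit $O$. If $O \cap Y = \emptyset$ then $O \subseteq X \setminus Y$ and the first claim alone shows $O$ is a single $H$-orbit, hence a single $K$-orbit. Otherwise $Y \ne X$ (else $H = \cG$ would contradict $g \in \cG \setminus H$), so $X \setminus Y$ is nonempty open; by minimality of $\cG$ the orbit $O$ is dense in $X$ and therefore meets $X \setminus Y$, and I fix $z_0 \in O \cap (X \setminus Y)$. The first claim gives $O \cap (X \setminus Y) \subseteq \Orb_K(z_0)$. For any $y \in O \cap Y$, the second claim produces $k \in K$ with $k(y) \in X \setminus Y$, and since $k \in \cG$ we have $k(y) \in O$, whence $k(y) \in O \cap (X \setminus Y) \subseteq \Orb_K(z_0)$ and so $y \in \Orb_K(z_0)$. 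Thus $O = \Orb_K(z_0)$. The main obstacle is the second claim, where minimality of $H$ on $Y$ must be combined with the single extra element $g$ to transport arbitrary points of $Y$ out of $Y$; the first claim is a routine local-transitivity argument using ampleness.
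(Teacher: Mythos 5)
Your proposal is correct and follows essentially the same route as the paper's proof: minimality of $\St_{\cG}(Y)$ on $Y$ is used to push an arbitrary point of $Y$ into the preimage (under $g$ or $g^{-1}$) of $X\setminus Y$, and generalized $2$-cycles supported in $X\setminus Y$ (hence lying in $\Stp_{\cG}(Y)\subset\St_{\cG}(Y)$) connect points of a common $\cG$-orbit outside $Y$. The only difference is cosmetic: you package these as two named claims and organize the conclusion orbit by orbit, while the paper argues pointwise for a pair $x,y$ in the same orbit.
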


\begin{proof}
Let $H=\St_{\cG}(Y)$.  For any $g\in\cG\setminus\St_{\cG}(Y)$, let  
$H_g=\langle\St_{\cG}(Y)\cup\{g\}\rangle$.  First let us show that no orbit 
of the group $H_g$ is contained in $Y$.  Since $g$ does not map the set $Y$ 
onto itself, there exists a point $x_0\in Y$ such that $g(x_0)\notin Y$ or 
$g^{-1}(x_0)\notin Y$.  In either case, we have an element $g_0\in H_g$ 
such that $g_0(x_0)\notin Y$.  Note that $X\setminus Y$ is an open 
neighborhood of the point $g_0(x_0)$.  Hence the set $U=g_0^{-1}(X\setminus 
Y)$ is an open neighborhood of $x_0$.  Take any $x\in Y$.  Since the group 
$H$ acts minimally when restricted to $Y$, the orbit $\Orb_H(x)$ is dense 
in $Y$.  In particular, it has a point in $U\cap Y$.  That is, $h(x)\in U$ 
for some $h\in H$.  Then $g_0h\in H_g$ and $g_0h(x)\in g_0(U)=X\setminus 
Y$.  Therefore the orbit $\Orb_{H_g}(x)$ has a point outside $Y$.

Since $H_g$ is a subgroup of $\cG$, for any $x\in X$ the orbit 
$\Orb_{H_g}(x)$ is contained in $\Orb_{\cG}(x)$.  We need to show that, 
conversely, $\Orb_{\cG}(x)\subset\Orb_{H_g}(x)$.  Take any point $y\in 
\Orb_{\cG}(x)$.  By the above the orbits of the points $x$ and $y$ under 
the action of the group $H_g$ are not contained in $Y$.  Hence there exist 
$h_1,h_2\in H_g$ such that the points $x_1=h_1(x)$ and $y_1=h_2(y)$ are 
outside the set $Y$.  If $x_1=y_1$ then $y=h_2^{-1}h_1(x)$ so that 
$y\in\Orb_{H_g}(x)$.  Now assume $x_1\ne y_1$.  Both $x_1$ and $y_1$ do 
belong to $\Orb_{\cG}(x)$.  By Lemma \ref{transposition-mimic}, there 
exists a map $f\in\cG$ such that $f(x_1)=y_1$ and $f(z)=z$ for all $z\in 
Y$.  Then $h_2^{-1}fh_1(x)=y$.  Note that $f\in\Stp_{\cG}(Y)\subset H$, 
which implies that $h_2^{-1}fh_1\in H_g$.  Thus $y\in\Orb_{H_g}(x)$.
\end{proof}

The simplest example of a closed subset of $X$ is a finite set.  To treat 
the stabilizers of finite sets, we do not even need to assume that the 
ample group acts minimally on $X$.

\begin{lemma}\label{perm-mimic}
Let $\cG\subset\Homeo(X)$ be an ample group.  Suppose $Y\subset X$ is a 
finite set contained in a single orbit of $\cG$ and $Z\subset X$ is a 
closed (e.g., finite) set disjoint from $Y$.  Then for any permutation 
$\pi:Y\to Y$ there exists a map $f\in\cG$ such that $f(x)=\pi(x)$ for all 
$x\in Y$ and $f(x)=x$ for all $x\in Z$.
\end{lemma}

\begin{proof}
First we consider a particular case when the permutation $\pi$ is a 
transposition.  That is, $\pi=(x\,y)$, where $x$ and $y$ are distinct 
elements of $Y$.  Let $Z_+=(Y\cup Z)\setminus\{x,y\}$.  Then $Z_+$ is a 
closed set containing neither $x$ nor $y$.  By Lemma 
\ref{transposition-mimic}, there exists a map $f\in\cG$ such that $f(x)=y$, 
$f(y)=x$, and $f(z)=z$ for all $z\in Z_+$.  The map $f$ coincides with 
$\pi$ on the set $Y$ and with the identity map on the set $Z$.

Now consider the general case.  If $\pi$ is the identity map, we can take 
$f=\id_X$, which belongs to $\cG$.  Otherwise the permutation $\pi$ can be 
decomposed as a product, $\pi=\pi_1\pi_2\dots\pi_k$, where each $\pi_i$ is 
a transposition exchanging two elements of $Y$.  By the above for any $i$, 
$1\le i\le k$ there exists a map $f_i\in\cG$ that coincides with $\pi_i$ on 
the set $Y$ and fixes all points of the set $Z$.  Then $f=f_1f_2\dots f_k$ 
is an element of $\cG$ that coincides with the permutation 
$\pi_1\pi_2\dots\pi_k=\pi$ on $Y$ and fixes all points of $Z$. 
\end{proof}

The next theorem is our main result on the stabilizers of finite sets.

\begin{theorem}\label{maxsub-finite}
Let $\cG\subset\Homeo(X)$ be an ample group.  Suppose $Y$ is a finite 
nonempty subset of $X$.  Then the following statements hold true.
\begin{itemize}
\item[(i)]
If $Y$ is a proper subset of a single orbit of $\cG$ and, moreover, $Y$ 
does not contain exactly half of points in the orbit, then 
the stabilizer $\St_{\cG}(Y)$ is a maximal subgroup of $\cG$.
\item[(ii)]
If $Y$ is a subset of an orbit $\Orb_{\cG}(x)$ that contains exactly half 
of its points, then $\St_{\cG}(Y)$ is a subgroup of index $2$ in the group 
$\St_{\cG}(Y,\Orb_{\cG}(x)\setminus Y)$.  The latter is a maximal subgroup 
of $\cG$ unless $Y$ consists of a single point, in which case 
$\St_{\cG}(Y,\Orb_{\cG}(x)\setminus Y)=\cG$.
\item[(iii)]
If there are at least two orbits of $\cG$ that intersect $Y$ but are not 
contained in $Y$, then $\St_{\cG}(Y)$ is a proper subgroup of $\cG$ that is 
not maximal.
\item[(iv)]
If $Y$ is a union of orbits of $\cG$, then $\St_{\cG}(Y)=\cG$.
\item[(v)]
If $Y=Y_1\sqcup Y_2$, where $Y_2$ is a union of orbits of $\cG$, then 
$\St_{\cG}(Y)=\St_{\cG}(Y_1)$.
\end{itemize}
\end{theorem}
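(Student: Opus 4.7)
Statements (iv) and (v) follow immediately from the fact that each orbit is $\cG$-invariant: if $Y_2$ is a union of orbits then $g(Y_2) = Y_2$ for every $g \in \cG$, so $\St_{\cG}(Y) = \cG$ in (iv), and $g(Y) = Y \Leftrightarrow g(Y_1) = Y_1$ in (v).  For (iii), fix two orbits $O_1, O_2$ each meeting $Y$ properly and let $Y_i = Y \cap O_i$.  Since each $g \in \St_{\cG}(Y)$ satisfies $g(Y_1) \subseteq g(Y) \cap O_1 = Y_1$, one has $\St_{\cG}(Y) \subseteq \St_{\cG}(Y_1)$.  Both inclusions in $\St_{\cG}(Y) \subsetneq \St_{\cG}(Y_1) \subsetneq \cG$ are made strict by Lemma \ref{perm-mimic1}: applied inside the single orbit $O_2$ with $a \in Y \cap O_2$, $b \in O_2 \setminus Y$ and $Z = Y \setminus \{a\}$ it produces an element of $\St_{\cG}(Y_1) \setminus \St_{\cG}(Y)$; applied inside $O_1$ to swap a point of $Y_1$ with a point of $O_1 \setminus Y_1$ it produces an element of $\cG \setminus \St_{\cG}(Y_1)$.

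For (i) and (ii), reduce via (v) to the situation $Y \subsetneq O$ with $O = \Orb_{\cG}(x)$ a single orbit; write $k = |Y|$.  Two basic combinatorial facts, both proved by Lemma \ref{perm-mimic1} applied inside the single orbit $O$, drive the rest: $\cG$ acts transitively on the family of $k$-element subsets of $O$, and the $\St_{\cG}(Y)$-orbits on this family are precisely the intersection shells $\{Z \subseteq O : |Z| = k,\ |Y \cap Z| = j\}$.  The first fact yields the index statement at once: $\St_{\cG}(Y, O \setminus Y)$ strictly contains $\St_{\cG}(Y)$ iff some element of $\cG$ sends $Y$ to $O \setminus Y$, equivalently $|Y| = |O \setminus Y|$; in that case the index equals $2$ because $\St_{\cG}(Y, O \setminus Y) = \St_{\cG}(Y) \cup g\,\St_{\cG}(Y)$ for any such $g$.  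The degenerate subcase $k = 1$, $|O| = 2$ is trivial: $\cG$ already preserves the two-point orbit, so $\St_{\cG}(Y, O \setminus Y) = \cG$.

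The heart is maximality.  Let $K$ denote $\St_{\cG}(Y)$ in case (i) and $\St_{\cG}(Y, O \setminus Y)$ in case (ii) (with $k \ge 2$), assume $K \subsetneq H \subseteq \cG$, and choose $g \in H \setminus K$.  Showing $H = \cG$ reduces, by the transitivity of $\cG$ on $k$-subsets, to proving that $H$ itself acts transitively on the family of $k$-subsets of $O$ in case (i), respectively on the set of unordered pairs $\{Z, O \setminus Z\}$ in case (ii); for then any $\varphi \in \cG$ can be post-multiplied by a suitable element of $K \subseteq H$ to lie in $H$.  The $H$-orbit of $Y$ is a union of intersection shells because $H \supseteq \St_{\cG}(Y)$; using in addition the conjugate $g\,\St_{\cG}(Y)\,g^{-1} = \St_{\cG}(g(Y)) \subseteq H$, a step-by-step swap reduction moves within the $H$-orbit of $Y$ from shell $j = |Y \cap g(Y)|$ into the shell $\{Z : |Y \cap Z| = k - 1\}$.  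Conjugating any resulting single-swap element by the whole of $\St_{\cG}(Y)$ populates every such swap inside $H$, and these swaps together with $\St_{\cG}(Y)$ generate the claimed transitive action.

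The main obstacle is exactly this swap-reduction step, which is the topological-dynamical analog of the classical primitivity of $S_n$ acting on $k$-subsets when $n \ne 2k$.  The delicate point is case (ii): there $|O| = 2k$, and the pair $\{Y, O \setminus Y\}$ really is a non-trivial $\cG$-invariant block, so the reduction must never collapse back onto it.  The choice $g \in H \setminus K$ guarantees $g(Y) \notin \{Y, O \setminus Y\}$, giving at least three distinct $k$-subsets in the initial $H$-orbit of $Y$; this supplies the extra room needed to carry out the reduction entirely outside the forbidden two-element block.
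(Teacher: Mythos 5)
Your handling of (iii), (iv), (v) and of the index-$2$ claim in (ii) is correct and matches the paper's.  For the maximality assertions in (i) and (ii), your strategy is a genuine reorganization of the paper's argument rather than a copy of it: you pass to the induced action of $H=\langle K\cup\{g\}\rangle$ on the $k$-element subsets of the orbit $O$, prove transitivity there, and then correct an arbitrary $\varphi\in\cG$ by an element of $\St_{\cG}(Y)$.  The paper instead manufactures explicit group elements: it conjugates by $g$ a transposition supported outside $Y$ (chosen so that $g$ sends one of its two points into $Y$ and the other outside) to obtain an element of $H$ realizing a transposition $(y_+\,y_-)$ with $y_+\in Y$, $y_-\notin Y$ on $Y\cup g(Y)$, and then runs two separate inductions --- one over growing finite sets $S\subset\Orb_{\cG}(x)\setminus Y$, one over the cardinality of the sets $S'\subset Y$ and $S''\subset\Orb_{\cG}(x)\setminus Y$ to be exchanged --- before finishing with the same correction $f=(fh_f)h_f^{-1}$.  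Your version trades those two inductions for the transitivity of $\St_{\cG}(Y)$ on each intersection shell plus connectivity of the swap (Johnson) graph, both of which do follow from Lemma \ref{perm-mimic1}; tracking only the image of the single set $Y$ is what makes the bookkeeping lighter.

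What you have not actually done is the step you yourself flag as ``the main obstacle'': moving the $H$-orbit of $Y$ from the shell $j=|Y\cap g(Y)|$ into the shell $\{Z:|Y\cap Z|=k-1\}$.  This is where the paper spends most of its effort, so asserting it is the one real deficit of the write-up.  It does go through, and in a single step rather than ``step-by-step'': since $g\in H$ and $\St_{\cG}(Y)\subset H$, the conjugate $\St_{\cG}(g(Y))=g\,\St_{\cG}(Y)\,g^{-1}$ lies in $H$, and its orbit of $Y$ is the entire shell $\{Z:|Z|=k,\ |Z\cap g(Y)|=j\}$ relative to $g(Y)$.  If $j\ge1$, choose $a\in Y\cap g(Y)$ and $b\in g(Y)\setminus Y$ (nonempty since $g(Y)\ne Y$); then $Z=(Y\setminus\{a\})\cup\{b\}$ lies in that shell and satisfies $|Y\cap Z|=k-1$.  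If $j=0$, the hypotheses of (i) and the exclusion $g(Y)\ne O\setminus Y$ in (ii) force $|O|>2k$, so one may take any $a\in Y$ and some $b\notin Y\cup g(Y)$.  Finally, note that maximality also requires $K\ne\cG$, which your argument never verifies; the paper does this explicitly (choose two points in one of the sets $Y$, $O\setminus Y$ and one in the other, and apply Lemma \ref{perm-mimic1}), and you should add the corresponding two lines.
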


\begin{proof}
Since $g(h(x))=(gh)(x)$ for all maps $g,h\in\cG$ and points $x\in X$, it 
follows that $g(\Orb_{\cG}(x))\subset\Orb_{\cG}(x)$.  For any $g\in\cG$,
the inverse map $g^{-1}$ is also in $\cG$.  Hence we also have 
$g^{-1}(\Orb_{\cG}(x))\subset\Orb_{\cG}(x)$, which implies that, in fact, 
$g(\Orb_{\cG}(x))=\Orb_{\cG}(x)$.  It further follows that $g(Y)=Y$ for any
set $Y\subset X$ that is a union of orbits of the group $\cG$.  
Consequently, $\St_{\cG}(Y)=\cG$ for any such set $Y$, finite or not.  
Further, if a set $Y\subset X$ is represented as a disjoint union of two 
sets, $Y=Y_1\sqcup Y_2$, then $g(Y)=g(Y_1)\sqcup g(Y_2)$ for any
$g\in\cG$.  Assuming $Y_2$ is a union of orbits of $\cG$, we have 
$g(Y_2)=Y_2$.  Hence $g(Y)=Y$ if and only if $g(Y_1)=Y_1$.  As a result, 
$\St_{\cG}(Y)=\St_{\cG}(Y_1)$.

We have proved statements (iv) and (v).  Next we prove statement (iii).  
Suppose $\Orb_{\cG}(x)$ and $\Orb_{\cG}(y)$ are two distinct orbits of
$\cG$ that intersect a finite set $Y\subset X$ but are not contained in 
$Y$.  Then there exist points $x_1,x_2\in\Orb_{\cG}(x)$ and 
$y_1,y_2\in\Orb_{\cG}(y)$ such that $x_1$ and $y_1$ belong to $Y$ while 
$x_2$ and $y_2$ do not.  By Lemma \ref{perm-mimic}, the group $\cG$ 
contains a map $f$ that interchanges $y_1$ and $y_2$ while fixing all
points of $Y$ different from $y_1$.  Note that 
$f(Y\cap\Orb_{\cG}(x))=Y\cap\Orb_{\cG}(x)$ while 
$f(Y\cap\Orb_{\cG}(y))\ne Y\cap\Orb_{\cG}(y)$.  Likewise, there exists 
$h\in\cG$ that interchanges $x_1$ and $x_2$ while fixing all points of $Y$ 
different from $x_1$.  Then $h(Y\cap\Orb_{\cG}(y))=Y\cap\Orb_{\cG}(y)$ 
while $h(Y\cap\Orb_{\cG}(x))\ne Y\cap\Orb_{\cG}(x)$.  It follows that 
neither of the stabilizers $\St_{\cG}(Y\cap\Orb_{\cG}(x))$ and
$\St_{\cG}(Y\cap\Orb_{\cG}(y))$ contains the other.  Therefore both 
stabilizers are proper subgroups of $\cG$ and their intersection is a 
proper subgroup of both of them.  It remains to observe that the said 
intersection contains $\St_{\cG}(Y)$.  Indeed, any $g\in\cG$ maps each 
orbit of $\cG$ onto itself.  Hence $g(Y\cap\Orb_{\cG}(x))= 
g(Y)\cap\Orb_{\cG}(x)$ and $g(Y\cap\Orb_{\cG}(y))=g(Y)\cap\Orb_{\cG}(y)$.  
Consequently, the equality $g(Y)=Y$ implies that 
$g(Y\cap\Orb_{\cG}(x))=Y\cap\Orb_{\cG}(x)$ and 
$g(Y\cap\Orb_{\cG}(y))=Y\cap\Orb_{\cG}(y)$.

We proceed to the main part of the theorem, namely, statements (i) and 
(ii).  Suppose that a finite nonempty set $Y$ is a proper subset of a 
single orbit $\Orb_{\cG}(x)$.  Since $g(\Orb_{\cG}(x))=\Orb_{\cG}(x)$ for 
any $g\in\cG$, we have $g(Y)=Y$ if and only if $g(\Orb_{\cG}(x)\setminus 
Y)=\Orb_{\cG}(x)\setminus Y$.  It follows that $\St_{\cG}(Y)$ coincides 
with another set stabilizer $\St_{\cG}(\Orb_{\cG}(x)\setminus Y)$ as well
as with the individual stabilizer $\Sti_{\cG}(Y,\Orb_{\cG}(x)\setminus
Y)$.  The collective stabilizer $\St_{\cG}(Y,\Orb_{\cG}(x)\setminus Y)$
acts on the set $\{Y,\Orb_{\cG}(x)\setminus Y\}$ and 
$\Sti_{\cG}(Y,\Orb_{\cG}(x)\setminus Y)$ is the kernel of this action.  If 
the set $Y$ does not contain exactly half of points in $\Orb_{\cG}(x)$, it 
cannot be mapped onto $\Orb_{\cG}(x)\setminus Y$ by any invertible 
transformation of $X$.  Then the action is trivial so that 
$\St_{\cG}(Y,\Orb_{\cG}(x)\setminus Y)=
\Sti_{\cG}(Y,\Orb_{\cG}(x)\setminus Y)=\St_{\cG}(Y)$.  In the case when $Y$ 
contains exactly half of points in $\Orb_{\cG}(x)$, the orbit is finite and 
the set $\Orb_{\cG}(x)\setminus Y$ contains the same number of points as 
$Y$.  Then there exists a permutation $\pi$ on $\Orb_{\cG}(x)$ such that 
$\pi(Y)=\Orb_{\cG}(x)\setminus Y$ and $\pi(\Orb_{\cG}(x)\setminus Y)=Y$.  
Lemma \ref{perm-mimic} implies that there is also a homeomorphism 
$f\in\cG$ such that $f(Y)=\Orb_{\cG}(x)\setminus Y$ and 
$f(\Orb_{\cG}(x)\setminus Y)=Y$.  Hence the action of the group 
$\St_{\cG}(Y,\Orb_{\cG}(x)\setminus Y)$ on the set 
$\{Y,\Orb_{\cG}(x)\setminus Y\}$ is nontrivial, which implies that 
$\St_{\cG}(Y)=\Sti_{\cG}(Y,\Orb_{\cG}(x)\setminus Y)$ is a subgroup of
index $2$ in $\St_{\cG}(Y,\Orb_{\cG}(x)\setminus Y)$.

Both $Y$ and $\Orb_{\cG}(x)\setminus Y$ are nonempty sets.  If each of them
consists of a single point, then $\St_{\cG}(Y,\Orb_{\cG}(x)\setminus Y)= 
\St_{\cG}(\Orb_{\cG}(x))=\cG$.  Otherwise we can choose two distinct points 
$x_1$ and $x_2$ in one of the two sets and another point $x_3$ in the other
set.  By Lemma \ref{perm-mimic}, the ample group $\cG$ contains a map $f$ 
such that $f(x_1)=x_1$, $f(x_2)=x_3$ and $f(x_3)=x_2$.  Then one of the 
points $f(x_1)$ and $f(x_2)$ is in $Y$ while the other one is in 
$\Orb_{\cG}(x)\setminus Y$.  It follows that $f\notin 
\St_{\cG}(Y,\Orb_{\cG}(x)\setminus Y)$.  Hence 
$\St_{\cG}(Y,\Orb_{\cG}(x)\setminus Y)\ne\cG$.

It remains to prove that the stabilizer $\St_{\cG}(Y,\Orb_{\cG}(x)\setminus
Y)$ is a maximal subgroup of $\cG$ provided that at least one of the sets 
$Y$ and $\Orb_{\cG}(x)\setminus Y$ consists of more than one point.  We 
have already shown that under this condition, 
$\St_{\cG}(Y,\Orb_{\cG}(x)\setminus Y)$ is a proper subgroup of $\cG$.  For
convenience, let us assume that the number of points in $Y$ is less than or
equal to the number of points in $\Orb_{\cG}(x)\setminus Y$ (note that the 
latter can be infinite).  There is no loss of generality here as otherwise 
we could replace $Y$ by $\Orb_{\cG}(x)\setminus Y$.  Given a map $g\in\cG$ 
that does not belong to $\St_{\cG}(Y,\Orb_{\cG}(x)\setminus Y)$, let $H_g$ 
denote the subgroup of $\cG$ generated by 
$\St_{\cG}(Y,\Orb_{\cG}(x)\setminus Y)$ and $g$.  We need to show that 
$H_g=\cG$.  This requires some preparation.

Since $g(\Orb_{\cG}(x))=\Orb_{\cG}(x)$, the condition 
$g\notin\St_{\cG}(Y,\Orb_{\cG}(x)\setminus Y)$ is equivalent to a pair of 
conditions $g(\Orb_{\cG}(x)\setminus Y)\ne Y$ and
$g(\Orb_{\cG}(x)\setminus Y)\ne\Orb_{\cG}(x)\setminus Y$.  Note that the 
image $g(\Orb_{\cG}(x)\setminus Y)$ cannot be a proper subset of $Y$ as any 
proper subset of $Y$ has strictly fewer points than $\Orb_{\cG}(x)\setminus 
Y$.  Besides, $g(\Orb_{\cG}(x)\setminus Y)$ cannot be a proper subset
of $\Orb_{\cG}(x)\setminus Y$ as that would imply that $Y$ is a proper 
subset of $g(Y)$, which would contradict the fact that the set $Y$ is 
finite.  We conclude that $g(\Orb_{\cG}(x)\setminus Y)$ intersects both $Y$ 
and $\Orb_{\cG}(x)\setminus Y$.  Hence we can choose points $x_+,x_-\in 
\Orb_{\cG}(x)\setminus Y$ such that $g(x_+)\in Y$ while $g(x_-)\in 
\Orb_{\cG}(x)\setminus Y$.  By Lemma \ref{perm-mimic}, there exists a map 
$h\in\cG$ such that the restriction of $h$ to the set $Z=Y\cup g^{-1}(Y) 
\cup\{x_+,x_-\}$ coincides with the transposition $(x_+\,\,x_-)$.  Then the 
restriction of the map $\tilde g=ghg^{-1}$ to the set $g(Z)=Y\cup g(Y)\cup 
\{g(x_+),g(x_-)\}$ coincides with the transposition 
$\bigl(g(x_+)\,g(x_-)\bigr)$.  Since the map $h$ fixes all points in $Y$, 
we have $h\in\St_{\cG}(Y)\subset\St_{\cG}(Y,\Orb_{\cG}(x)\setminus Y)$.  It 
follows that $\tilde g\in H_g$.  Let $y_+=g(x_+)$ and $y_-=g(x_-)$.  By 
construction, $y_+\in Y$, $y_-\in \Orb_{\cG}(x)\setminus Y$, $\tilde 
g(y_+)=y_-$, $\tilde g(y_-)=y_+$, and $\tilde g(y)=y$ for all points $y\in 
Y$ different from $y_+$.

Next we prove that for any finite set $S\subset \Orb_{\cG}(x)\setminus Y$ 
containing the point $y_-$, there exists a map $h_S\in H_g$ such that the 
restriction of $h_S$ to $Y\cup S$ coincides with the transposition 
$(y_+\,\,y_-)$.  The proof is by induction on the number $n$ of points in 
$S$.  If $n=1$ then $S=\{y_-\}$ and we can let $h_S=\tilde g$.  In the case
$n\ge2$, we assume the claim holds for all sets of $n-1$ points.  Take any 
point $z_0\in S$ different from $y_-$.  Then the set
$S_0=S\setminus\{z_0\}$ consists of $n-1$ points including $y_-$.  By the 
inductive assumption, there exists a map $h_{S_0}\in H_g$ such that the 
restriction of $h_{S_0}$ to the set $Y\cup S_0$ coincides with the 
transposition $(y_+\,\,y_-)$.  If the point $\tilde{z}_0=h_{S_0}(z_0)$ 
coincides with $z_0$, then we can clearly let $h_S=h_{S_0}$.  Otherwise 
Lemma \ref{perm-mimic} implies existence of a map $\phi\in\cG$ such that 
its restriction to the set $Y\cup S\cup\{\tilde{z}_0\}$ coincides with the 
transposition $(z_0\,\tilde{z}_0)$.  Since $\phi$ fixes all points in $Y$, 
it belongs to the stabilizer $\St_{\cG}(Y)$, which is contained in $H_g$.  
Then the map $\phi h_{S_0}$ is in $H_g$ as well.  By construction, $\phi 
h_{S_0}(z_0)=z_0$.  Note that $\tilde{z}_0\notin Y\cup S_0$ as $Y\cup S_0= 
h_{S_0}(Y\cup S_0)$.  Hence $\phi h_{S_0}$ coincides with $h_{S_0}$ on 
$Y\cup S_0$.  Therefore we can let $h_S=\phi h_{S_0}$ and complete the 
inductive step.

Next we prove that for any pair of sets $S'\subset Y$ and $S''\subset 
\Orb_{\cG}(x)\setminus Y$ of the same cardinality, there exists a map 
$h_{S',S''}\in H_g$ such that $h_{S',S''}(S')=S''$, $h_{S',S''}^2(y)=y$ 
for all $y\in S'$, and $h_{S',S''}(y)=y$ for all $y\in Y\setminus S'$.  The 
proof is by induction on the number $n$ of points in $S'$.  If $n=0$ then 
both sets are empty and we can let $h_{S',S''}=\id_X$.  In the case 
$n\ge1$, we assume the claim holds whenever the sets consist of $n-1$ 
points.  Take any points $y_0\in S'$ and $z_0\in S''$.  Then the sets 
$S'_0=S'\setminus\{y_0\}$ and $S''_0=S''\setminus\{z_0\}$ contain $n-1$ 
points each.  By the inductive assumption, there exists a map 
$h_{S'_0,S''_0}\in H_g$ such that $h_{S'_0,S''_0}(S'_0)=S''_0$, 
$h_{S'_0,S''_0}^2(y)=y$ for all $y\in S'_0$, and $h_{S'_0,S''_0}(y)=y$ for 
all $y\in Y\setminus S'_0$.  Note that $h_{S'_0,S''_0}(S''_0)=S'_0$ and 
hence $h_{S'_0,S''_0}(Y\cup S''_0)=Y\cup S''_0$.  As a consequence, the 
point $\tilde{z}_0=h_{S'_0,S''_0}(z_0)$ is in $\Orb_{\cG}(x)\setminus Y$ 
but not in $S''_0$.  Consider a set $Z=S''_0\cup\{y_-\}\cup\{z_0\}\cup
\{\tilde{z}_0\}$, which is a finite subset of $\Orb_{\cG}(x)\setminus Y$.  
By the above there exists a map $h_Z\in H_g$ such that its restriction to 
$Y\cup Z$ coincides with the transposition $(y_+\,\,y_-)$.  Next, by Lemma 
\ref{perm-mimic}, for any permutation on the set $Y\cup Z$ there exists an 
element of the group $\cG$ that coincides with this permutation on $Y\cup 
Z$.  In particular, there exists a map $\phi\in\cG$ such that 
$\phi(z_0)=\tilde{z}_0$, $\phi(\tilde{z}_0)=z_0$, and $\phi(y)=y$ for all 
other points $y\in Y\cup Z$.  The restriction of $\phi$ to $Y\cup Z$ 
coincides with the transposition $(z_0\,\tilde{z}_0)$ if 
$z_0\ne\tilde{z}_0$, and with the identity map if $z_0=\tilde{z}_0$.  
Furthermore, there exists a map $\psi\in\cG$ such that $\psi(y_+)=y_0$, 
$\psi(y_0)=y_+$, $\psi(y_-)=z_0$, $\psi(z_0)=y_-$, and $\psi(y)=y$ for all 
other points $y\in Y\cup Z$.  The restriction of $\psi$ to the set $Y\cup 
Z$ coincides with the permutation $(y_+\,y_0)(y_-\,z_0)$ if $y_+\ne y_0$ 
and $y_-\ne z_0$, with the transposition $(y_+\,y_0)$ if $y_+\ne y_0$ and 
$y_-=z_0$, with the transposition $(y_-\,z_0)$ if $y_+=y_0$ and $y_-\ne 
z_0$, and with the identity map if $y_+=y_0$ and $y_-=z_0$.  It follows 
that the restriction of the map $\psi h_Z\psi^{-1}$ to $Y\cup Z$ coincides 
with the transposition $\bigl(\psi(y_-)\,\psi(y_+)\bigr)=(y_0\,z_0)$.  
Finally, let $h_{S',S''}=\psi h_Z\psi^{-1}\phi h_{S'_0,S''_0}$.  By 
construction, $\phi(Y)=\psi(Y)=Y$ so that $\phi,\psi\in\St_{\cG}(Y)\subset 
H_g$.  Then $h_{S',S''}\in H_g$ as well.  The map $h_{S',S''}$ coincides 
with $h_{S'_0,S''_0}$ on the set $(Y\setminus\{y_0\})\cup S''_0$.  Besides, 
$h_{S',S''}(y_0)=\psi h_Z\psi^{-1}\phi(y_0)=\psi h_Z\psi^{-1}(y_0)=z_0$ and
$h_{S',S''}(z_0)=\psi h_Z\psi^{-1}\phi(\tilde{z}_0)=\psi 
h_Z\psi^{-1}(z_0)=y_0$.  The inductive step is complete.

Now we are ready to show that $H_g=\cG$.  Take any map $f\in\cG$.  We 
define three sets $S_{++}=Y\cap f^{-1}(Y)$, $S_{+-}=Y\cap 
f^{-1}(\Orb_{\cG}(x)\setminus Y)$ and $S_{-+}=(\Orb_{\cG}(x)\setminus 
Y)\cap f^{-1}(Y)$.  Since $f(\Orb_{\cG}(x))=\Orb_{\cG}(x)$, it follows that 
$Y=S_{++}\sqcup S_{+-}=f(S_{++})\sqcup f(S_{-+})$.  Since $f$ is a 
one-to-one map, the set $f(S_{++})$ is of the same cardinality as $S_{++}$ 
while the set $f(S_{-+})$ is of the same cardinality as $S_{-+}$.  Since 
the set $Y$ is finite, it follows that $S_{-+}$ is of the same cardinality 
as $S_{+-}$.  By the above there exists a map $h_f\in H_g$ such that 
$h_f(S_{+-})=S_{-+}$ and $h_f(y)=y$ for all $y\in Y\setminus 
S_{+-}=S_{++}$.  We obtain that $fh_f(S_{+-})=f(h_f(S_{+-}))=f(S_{-+})$ and 
$fh_f(S_{++})=f(h_f(S_{++}))=f(S_{++})$, which implies that $fh_f(Y)=Y$.  
Hence the map $fh_f$ belongs to the stabilizer $\St_{\cG}(Y)$, which is 
contained in $H_g$.  Then $f=(fh_f)h_f^{-1}$ is in $H_g$ as well.  Thus 
$H_g=\cG$.
\end{proof}

\begin{remark}\label{maxsub-finite-remark}
Theorem \ref{maxsub-finite} allows to determine for any ample group 
$\cG\subset\Homeo(X)$ and any finite set $Y\subset X$ whether the 
stabilizer $\St_{\cG}(Y)$ is a maximal subgroup of $\cG$ and whether 
$\St_{\cG}(Y)=\cG$.  Namely, $\St_{\cG}(Y)=\cG$ if and only if $Y$ is a 
union of orbits of $\cG$ (since the set $Y$ is finite, this means the union 
of a finite number of finite orbits).  The stabilizer $\St_{\cG}(Y)$ is a 
maximal subgroup of $\cG$ if and only if $Y=Y_1\sqcup Y_2$, where $Y_2$ is
a union of orbits of $\cG$ (it may be empty) and $Y_1$ is a proper nonempty
subset of a single orbit such that either $Y$ does not contain exactly half 
of points in the orbit or else $Y$ is a one-point subset of a two-point 
orbit.
\end{remark}

The following concise corollary of Theorem \ref{maxsub-finite} is much less
general but it will be enough, e.g., when the topological space $X$ is a 
Cantor set and the ample group $\cG$ acts minimally on $X$.

\begin{theorem}\label{maxsub-finite2}
Let $\cG\subset\Homeo(X)$ be an ample group that has no finite orbits.  
Suppose $Y$ is a finite nonempty subset of $X$.  Then the stabilizer 
$\St_{\cG}(Y)$ is a maximal subgroup of $\cG$ if and only if $Y$ is 
contained in a single orbit of $\cG$.
\end{theorem}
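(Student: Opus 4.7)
The plan is to derive Theorem \ref{maxsub-finite2} directly as a corollary of the far more detailed Theorem \ref{maxsub-finite}, exploiting the fact that the hypothesis ``no finite orbits'' collapses several cases of the dichotomy in that theorem.

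First I would handle the forward direction by contrapositive: assume $Y$ is not contained in a single orbit of $\cG$. Then $Y$ meets at least two distinct orbits. Since $\cG$ has no finite orbits and $Y$ is finite, every orbit meeting $Y$ is necessarily infinite, and in particular is not contained in $Y$. Thus there are (at least) two orbits of $\cG$ that intersect $Y$ but are not contained in $Y$, so case (iii) of Theorem \ref{maxsub-finite} applies: $\St_\cG(Y)$ is a proper subgroup of $\cG$ that is not maximal. This rules out the maximality of $\St_\cG(Y)$ whenever $Y$ is spread across several orbits.

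For the converse, suppose $Y$ is a finite nonempty subset of a single orbit $\Orb_\cG(x)$. Since the orbit is infinite by hypothesis while $Y$ is finite, $Y$ is automatically a proper subset of $\Orb_\cG(x)$. Moreover, an infinite orbit has no notion of ``exactly half of its points'' being contained in $Y$: the complement $\Orb_\cG(x)\setminus Y$ is infinite while $Y$ is finite, so they cannot have equal cardinality. Thus the hypothesis of case (i) of Theorem \ref{maxsub-finite} is satisfied, yielding directly that $\St_\cG(Y)$ is a maximal subgroup of $\cG$. The edge cases treated by parts (ii), (iv), (v) of Theorem \ref{maxsub-finite} (where $Y$ is a union of orbits, or contains half of a finite orbit) are precisely those that are excluded by the assumption that $\cG$ has no finite orbits.

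There is no real obstacle here; the entire argument is a case check against the classification already provided. The only point worth stating carefully is why cases (ii) and (iv) are vacuous: both require $Y$ (or its complement within an orbit) to be finite and of a size tied to the orbit size, which forces the containing orbit to be finite and thus violates our standing hypothesis. Once this is noted, the biconditional reduces to the disjoint combination of case (i) on one side and case (iii) on the other.
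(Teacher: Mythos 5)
Your proposal is correct and follows essentially the same route as the paper: the paper's own proof likewise observes that since all orbits are infinite, $Y$ can neither contain an orbit nor exactly half of one, then applies statement (iii) of Theorem \ref{maxsub-finite} when $Y$ meets two orbits and statement (i) when $Y$ lies in a single orbit. No gaps.
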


\begin{proof}
Since all orbits of the group $\cG$ are infinite, the finite set $Y$ cannot 
contain an entire orbit of $\cG$ or exactly half of elements in an orbit.  
If $Y$ intersects at least two different orbits of $\cG$, then the 
stabilizer $\St_{\cG}(Y)$ is not a maximal subgroup of $\cG$ due to 
statement (iii) of Theorem \ref{maxsub-finite}.  Otherwise $Y$ is 
contained in a single orbit of $\cG$.  Then $\St_{\cG}(Y)$ is a maximal 
subgroup of the group $\cG$ due to statement (i) of Theorem 
\ref{maxsub-finite}.
\end{proof}

To treat the stabilizers of infinite closed sets, we need to develop a 
different approach.  This approach will also apply to another class of 
subgroups that contains many maximal subgroups, namely, the stabilizers of 
partitions of $X$ into clopen sets (see Definition \ref{def-stab-coll}).

\begin{lemma}\label{direct-product-of-local-subgroups}
Suppose $X=U_1\sqcup U_2\sqcup\dots\sqcup U_k$ is a partition of $X$ into 
clopen sets.  Then for any ample group $\cG\subset\Homeo(X)$ the individual 
stabilizer $\Sti_{\cG}(U_1,U_2,\dots,U_k)$ is the internal direct product 
of the local subgroups $\cG_{U_i}$, $1\le i\le k$.
\end{lemma}

\begin{proof}
The group $H=\Sti_{\cG}(U_1,U_2,\dots,U_k)$ is the intersection of the set 
stabilizers $\St_{\cG}(U_i)$, $1\le i\le k$.  For any $i$ the local 
subgroup $\cG_{U_i}=\RiSt_{\cG}(U_i)$ is a subgroup of $\St_{\cG}(U_i)$.
Besides, $\cG_{U_i}\subset\Stp_{\cG}(U_j)\subset\St_{\cG}(U_j)$ whenever 
$i\ne j$ as then $U_i\cap U_j=\emptyset$.  Hence $\cG_{U_i}\subset H$ for 
all $i$.

For each $i$, $1\le i\le k$ choose a map $f_i\in\cG_{U_i}$.  Then the maps 
$f_1,f_2,\dots,f_k$ commute with one another as they have disjoint 
supports.  Besides, the product $f_1f_2\ldots f_k$ coincides with $f_i$ on 
$U_i$ for all $i$, which implies that $f_1f_2\ldots f_k=\id_X$ if and only 
if each $f_i$ is the identity map.  It follows that the local subgroups 
$\cG_{U_i}$, $1\le i\le k$ form an internal direct product in $H$.

Now take any $f\in H$.  For each $i$, $1\le i\le k$, consider a map 
$f_i:X\to X$ that coincides with $f$ on $U_i$ and with the identity map on 
$X\setminus U_i$.  Since $f$ is invertible and $f(U_i)=U_i$, it follows 
that the map $f_i$ is invertible as well.  By construction, $f_i$ is 
piecewise an element of the ample group $\cG$ and $\supp(f_i)\subset U_i$.  
Hence $f_i\in\cG_{U_i}$.  Also by construction, $f=f_1f_2\ldots f_k$.  We 
conclude that the product of the local subgroups 
$\cG_{U_1},\cG_{U_2},\dots,\cG_{U_k}$ equals $H$.
\end{proof}

\begin{lemma}\label{local-subgroups-in-stabilizers}
Let $X=U_1\sqcup U_2\sqcup\dots\sqcup U_k$ be a partition of $X$ into 
clopen sets at most one of which consists of a single point.  Let 
$\cG\subset\Homeo(X)$ be an ample group that acts minimally on $X$.  
Suppose $H$ is a subgroup of $\cG$ such that 
$\Sti_{\cG}(U_1,U_2,\dots,U_k)\subset H\subset 
\St_{\cG}(U_1,U_2,\dots,U_k)$.  Then $H$ contains a local subgroup $\cG_U$ 
if and only if $U\subset U_i$ for some $i$, $1\le i\le k$.
\end{lemma}

\begin{proof}
Lemma \ref{direct-product-of-local-subgroups} implies that the individual 
stabilizer $\Sti_{\cG}(U_1,U_2,\dots,U_k)$ contains local subgroups 
$\cG_{U_1},\cG_{U_2},\dots,\cG_{U_k}$.  If a clopen set $U$ is contained in 
$U_i$ for some $i$, $1\le i\le k$ then, clearly, $\cG_U\subset\cG_{U_i}$.  
Consequently, $\cG_U\subset H$.  Now assume that a clopen set $U$ is not 
contained in a single element of the partition $X=U_1\sqcup 
U_2\sqcup\dots\sqcup U_k$.  Then there are two different elements $V_1$ and 
$V_2$ of the partition such that the intersections $U\cap V_1$ and $U\cap 
V_2$ are not empty.  By the assumption of the lemma, at least one of the 
sets $V_1$ and $V_2$ contains more than one point.  We may assume without 
loss of generality that $V_1$ contains more than one point.  Take any point 
$x\in U\cap V_1$ and let $z$ be any point of $V_1$ different from $x$.  
Since the group $\cG$ acts minimally on $X$, the orbit $\Orb_{\cG}(x)$ is 
dense in $X$.  In particular, this orbit has a point $y$ in the nonempty 
clopen set $U\cap V_2$.  By Lemma \ref{transposition-mimic}, there exists a 
map $f\in\cG$ such that $f(x)=y$ and $f$ coincides with the identity map on 
the closed set $(X\setminus U)\cup\{z\}$.  Then $f\in\cG_U$.  Since 
$f(x)=y$ and $f(z)=z$, the image $f(V_1)$ intersects both $V_1$ and $V_2$ 
so that it cannot be an element of the partition $X=U_1\sqcup 
U_2\sqcup\dots\sqcup U_k$.  As a consequence, $f$ does not belong to the 
collective stabilizer $\St_{\cG}(U_1,U_2,\dots,U_k)$.  In particular, 
$f\notin H$.  Thus the local subgroup $\cG_U$ is not contained in $H$.
\end{proof}

Notice that the assumption of Lemma \ref{local-subgroups-in-stabilizers} 
that at most one element of the partition of $X$ consists of a single point 
is essential.  A one-point set $\{x\}$ is clopen if $x$ is an isolated 
point of $X$.  If $\{x\}$ and $\{y\}$ are two distinct elements of the 
partition, then the local subgroup $\cG_{\{x,y\}}$ is contained in the 
stabilizer of the partition.

\begin{lemma}\label{partition-stab}
Let $\cG\subset\Homeo(X)$ be an ample group that acts minimally on $X$.  
Suppose that $X=U_1\sqcup U_2\sqcup\dots\sqcup U_k$ is a partition of $X$ 
into nonempty clopen sets.  Then $\St_{\cG}(U_1,U_2,\dots,U_k)\ne\cG$ 
unless this is the partition into points or the trivial partition 
consisting of only one set.
\end{lemma}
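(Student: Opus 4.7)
The plan is to reduce to the nontrivial case and then produce an explicit element of $\cG$ that fails to preserve the partition, built as a generalized $2$-cycle. The two excluded cases are trivially accounted for: if $k=1$ then $\St_\cG(X)=\cG$ by definition, and if every $U_i$ is a singleton then each $g\in\cG$ permutes the points $\{x_1,\dots,x_k\}$, so the partition into singletons is preserved automatically. From now on I would assume $k\ge 2$ and that some piece, say $U_i$, contains at least two distinct points $x\ne y$.

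Next I would use minimality together with the totally disconnected topology to set up the data for a generalized $2$-cycle. Pick any $j\ne i$, so $U_j$ is a nonempty clopen set disjoint from $U_i$. By Lemma \ref{minimality}, the orbit $\Orb_\cG(x)$ is dense in $X$, so it meets the clopen set $U_j$; hence there exists $g\in\cG$ with $g(x)\in U_j$. Since $X$ is totally disconnected and Hausdorff, the point $x$ has a clopen neighborhood $V_0$ not containing $y$. Define
\[
W=U_i\cap V_0\cap g^{-1}(U_j).
\]
This is a clopen neighborhood of $x$ satisfying $W\subset U_i$, $y\notin W$, and $g(W)\subset U_j$. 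Because $U_i\cap U_j=\emptyset$, the sets $W$ and $g(W)$ are disjoint, so the generalized $2$-cycle $\de_{W;g}$ is defined. Since $g\in\cG$ and $\cG$ is ample, we get $\de_{W;g}\in\F(\cG)=\cG$.

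Finally I would verify that $\de_{W;g}$ does not preserve the partition. By construction $\de_{W;g}(x)=g(x)\in U_j$, while $y\in U_i\setminus(W\cup g(W))$, so $\de_{W;g}(y)=y\in U_i$. Therefore the image $\de_{W;g}(U_i)$ contains one point in $U_j$ and another in $U_i$, and since $U_i$ and $U_j$ are distinct members of the partition, $\de_{W;g}(U_i)$ cannot coincide with any single $U_\ell$. Hence $\de_{W;g}\notin\St_\cG(U_1,\dots,U_k)$, which gives $\St_\cG(U_1,\dots,U_k)\ne\cG$.

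There is no serious obstacle here; the one subtle point is the choice of $W$ inside $V_0$, which is made precisely to guarantee that a witness point $y\in U_i$ stays fixed so that the image of $U_i$ straddles two pieces of the partition. All other ingredients—density of orbits from minimality, clopen separation from total disconnectedness, and promotion of a piecewise-$\cG$ map to $\cG$ via ampleness—are immediate from earlier lemmas in the excerpt.
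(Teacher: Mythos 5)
Your proof is correct. The case reduction is exhaustive (if the partition is neither trivial nor into points, then $k\ge 2$ and some piece contains two distinct points), the set $W=U_i\cap V_0\cap g^{-1}(U_j)$ is a clopen neighborhood of $x$ with $W\subset U_i$ and $g(W)\subset U_j$ disjoint from $W$, the generalized $2$-cycle $\de_{W;g}$ lands in $\F(\cG)=\cG$ by ampleness, and the witness point $y$ is fixed because it lies outside $W\cup g(W)$, so $\de_{W;g}(U_i)$ straddles $U_i$ and $U_j$. Your route differs from the paper's: the paper splits into the case where every piece has at least two points, which it dispatches by citing Lemma \ref{local-groups-in-stabilizers} (the stabilizer would have to contain $\cG_X=\cG$ only if $X$ sits inside a single piece, forcing the trivial partition), and the case where some piece is a singleton, which it handles by a separate orbit-density argument showing all pieces must then be singletons. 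You instead give a single, self-contained construction that covers both cases uniformly; in effect you have inlined the $2$-cycle construction that lives inside the proof of Lemma \ref{local-groups-in-stabilizers} rather than invoking that lemma as a black box. What the paper's route buys is reuse of a lemma it needs elsewhere anyway; what yours buys is a shorter, more elementary and case-free argument that does not depend on the machinery of local groups.
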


\begin{proof}
The partition of $X$ into points and the trivial partition are preserved by 
any invertible transformation of $X$.  In particular, they are preserved by 
all elements of the group $\cG$.  Conversely, assume that 
$\St_{\cG}(U_1,U_2,\dots,U_k)=\cG$.  Then the group $\cG$ acts on the set
$\{U_1,U_2,\dots,U_k\}$.  That action is transitive since the action of 
$\cG$ on $X$ is minimal.  As a consequence, all elements of the partition 
are of the same cardinality.  Hence this is either the partition into 
points, or else a partition in which every element consists of more than 
one point.  In the latter case, Lemma \ref{local-subgroups-in-stabilizers} 
implies that any local subgroup $\cG_U$ is contained in the stabilizer 
$\St_{\cG}(U_1,U_2,\dots,U_k)$ if and only if the clopen set $U$ is 
contained in a single element of the partition.  Since $\cG$ can be 
represented as the local subgroup $\cG_X$, it follows that $X$ is an 
element of the partition.  As all elements of the partition are nonempty 
sets, we obtain that $X$ is the only element so that the partition is 
trivial.
\end{proof}

\begin{proposition}\label{maxsub-partition}
Let $\cG\subset\Homeo(X)$ be an ample group with Property \ref{prop-E}.  
Suppose $H$ is a subgroup of $\cG$ that acts minimally on $X$ and contains 
a local subgroup $\cG_U$ for some clopen set $U$ consisting of more than 
one point.  Then there exists a partition of $X$ into clopen sets, 
$X=U_1\sqcup U_2\sqcup\dots\sqcup U_k$, such that each $U_i$ consists of 
more than one point and $\Sti_{\cG}(U_1,U_2,\dots,U_k)\subset 
H\subset\St_{\cG}(U_1,U_2,\dots,U_k)$.  Moreover, the partition is unique 
and the induced action of $H$ on the set $\{U_1,U_2,\dots,U_k\}$ is 
transitive. 
\end{proposition}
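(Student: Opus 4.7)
The plan is to extract the partition as the equivalence classes of a natural relation defined by the ``$H$-good'' clopen sets --- those $V$ with $\cG_V\subset H$ --- using Property~\ref{prop-E} to get transitivity of the relation and minimality of $H$ on $X$ to force the classes to be clopen and finite in number.

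First I introduce the collection $\mathcal{U}=\{V\subset X:V\text{ clopen},\ \cG_V\subset H\}$, which contains $U$ by hypothesis. Two features of $\mathcal{U}$ drive the argument: Lemma~\ref{conjugate-of-local-group} gives $h\cG_V h^{-1}=\cG_{h(V)}$ for every $h\in H$, so $\mathcal{U}$ is closed under the action of $H$; and by Property~\ref{prop-E}, whenever $V_1,V_2\in\mathcal{U}$ intersect, $\cG_{V_1\cup V_2}$ is generated by $\cG_{V_1}\cup\cG_{V_2}\subset H$, hence $V_1\cup V_2\in\mathcal{U}$. Iterating, the union of any finitely many pairwise-intersecting members of $\mathcal{U}$ lies in $\mathcal{U}$.

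Next I define $x\sim y$ on $X$ to mean that $\{x,y\}\subset V$ for some $V\in\mathcal{U}$; transitivity follows from the union property. The equivalence class $W_x$ is the union of all $V\in\mathcal{U}$ containing $x$, and so is open. The set $\bigcup_{h\in H}h(U)$ is open, $H$-invariant, and nonempty, so minimality of $H$ on $X$ forces it to equal $X$; hence every point lies in some $h(U)\in\mathcal{U}$ and the classes $W_x$ partition $X$. Each $W_x$ is also closed as the complement of the union of the other classes, hence clopen, and by compactness only finitely many occur; enumerate them $U_1,\dots,U_k$. Each $U_i$ contains some $h(U)$ with more than one point, so each $U_i$ has more than one point. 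Covering $U_i$ by finitely many $V\in\mathcal{U}$ contained in it and iterating Property~\ref{prop-E} gives $U_i\in\mathcal{U}$, so $\cG_{U_i}\subset H$; Lemma~\ref{direct-product-of-local-groups} then yields $\Sti_{\cG}(U_1,\dots,U_k)\subset H$. For the reverse inclusion, take $h\in H$: $h(U_i)\in\mathcal{U}$ meets some $U_j$, and maximality of $U_j$ as an equivalence class forces $h(U_i)\subset U_j$; applying the same reasoning to $h^{-1}$ gives $h(U_i)=U_j$, so $h$ permutes $\{U_1,\dots,U_k\}$.

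Transitivity of the induced action of $H$ on $\{U_1,\dots,U_k\}$ follows once more from minimality: the union of any $H$-orbit is clopen and $H$-invariant, hence equals $X$. For uniqueness, suppose $X=V_1\sqcup\cdots\sqcup V_m$ is another partition into clopen sets each with more than one point, satisfying $\Sti_{\cG}(V_1,\dots,V_m)\subset H\subset\St_{\cG}(V_1,\dots,V_m)$. By Lemma~\ref{direct-product-of-local-groups} each $V_j\in\mathcal{U}$, and so is contained in some $U_{i(j)}$. If for some $i$ there were two distinct $V_{j_1},V_{j_2}\subset U_i$, then minimality of $\cG$ on $X$ would produce $x\in V_{j_1}$ and $f\in\cG$ with $f(x)\in V_{j_2}$; choosing a clopen neighborhood $W$ of $x$ with $W\subsetneq V_{j_1}$ (possible since $V_{j_1}$ has more than one point) and $f(W)\subset V_{j_2}$, the generalized $2$-cycle $\de_{W;f}$ is supported in $V_{j_1}\cup V_{j_2}\subset U_i$ and so lies in $\cG_{U_i}\subset H$; but $\de_{W;f}(V_{j_1})=(V_{j_1}\setminus W)\cup f(W)$ meets both $V_{j_1}$ and $V_{j_2}$, so is not a member of $\{V_j\}$, contradicting $H\subset\St_{\cG}(V_1,\dots,V_m)$. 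I expect the main technical challenges to lie in verifying clopen-ness and finiteness of the equivalence classes (where minimality and compactness combine with Property~\ref{prop-E}) and in the uniqueness step, where the generalized $2$-cycle exploits the ``more than one point'' hypothesis in an essential way to keep $V_{j_1}\setminus W$ nonempty.
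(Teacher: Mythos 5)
Your proof is correct and follows essentially the same route as the paper: you build the same collection of ``good'' clopen sets ($\cG_V\subset H$), establish the same closure properties (under $H$-translates via Lemma~\ref{conjugate-of-local-group}, under unions of intersecting sets via Property~\ref{prop-E}, under passing to clopen subsets), and use minimality of $H$ to make this collection cover $X$; extracting the partition as equivalence classes rather than as a minimal finite subcover, and inlining the generalized-$2$-cycle argument for uniqueness instead of citing Lemma~\ref{local-groups-in-stabilizers}, are only presentational differences. The one spot worth tightening is the claim that $U_i$ itself lies in the collection: choose the finite subcover from the sets containing a fixed point $x\in U_i$ (which is how $U_i$ arises as an equivalence class), so that they pairwise intersect and Property~\ref{prop-E} can be iterated directly.
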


\begin{proof}
Let $\cL_H$ be the set of all clopen sets $V\subset X$ such that the 
subgroup $H$ contains the local subgroup $\cG_V$.  Note that 
$\cG_{V_1}\subset\cG_{V_2}$ whenever $V_1\subset V_2$.  Hence any clopen 
set contained in an element of $\cL_H$ is itself an element of $\cL_H$.  
Since the group $\cG$ has Property \ref{prop-E}, for any intersecting 
clopen sets $V_1$ and $V_2$ the local subgroup $\cG_{V_1\cup V_2}$ is 
generated by the union of $\cG_{V_1}$ and $\cG_{V_2}$.  Therefore $V_1\cup 
V_2\in\cL_H$ whenever $V_1,V_2\in\cL_H$ and $V_1\cap V_2\ne\emptyset$.  
Furthermore, $\cG_{g(V)}=g\cG_Vg^{-1}$ for any clopen set $V$ and any 
$g\in\cG$.  This follows from the fact that 
$\supp(gfg^{-1})=g\bigl(\supp(f)\bigr)$ for any homeomorphism 
$f\in\Homeo(X)$.  As a consequence, $h(V)\in\cL_H$ for all $h\in H$ 
whenever $V\in\cL_H$.

By assumption, some clopen set $U$ consisting of more than one point is an 
element of $\cL_H$.  Minimality of the action of the group $H$ on $X$ 
implies that any orbit $\Orb_H(x)$ of $H$ is dense in $X$.  In particular, 
it has a point in $U$.  That is, $h(x)\in U$ for some $h\in H$.  Then 
$h^{-1}(U)$ is an element of $\cL_H$ containing the point $x$.  We conclude 
that elements of $\cL_H$ cover $X$.  Since the topological space $X$ is 
compact, there is a finite subcover.  Let $\{U_1,U_2,\dots,U_k\}$ be a 
finite subcover with the least possible number of elements.  This choice 
implies that all elements of the subcover are nonempty sets and the union 
of any two or more of them is not an element of $\cL_H$.  Then it follows 
from the above that any two elements of the subcover are disjoint.  Hence 
our subcover is actually a partition: $X=U_1\sqcup U_2\sqcup\dots\sqcup 
U_k$.  Observe that this is not the partition into points as otherwise $U$ 
would be the union of two or more elements of the partition.  We claim that 
any set $V\in\cL_H$ is contained in a single element of the partition.  
Assume the contrary, that is, $V$ intersects $U_i$ and $U_j$, where $i\ne 
j$.  Then the unions $V\cup U_i$ and $V\cup U_j$ are elements of $\cL_H$.  
These unions are not disjoint as they both contain $V$.  Therefore their 
union $V\cup U_i\cup U_j$ belongs to $\cL_H$ as well.  Since $U_i\cup U_j$ 
is a clopen subset of $V\cup U_i\cup U_j$, we obtain that $U_i\cup 
U_j\in\cL_H$, which yields a contradiction.

Take any $h\in H$.  By the above the sets $h(U_1),h(U_2),\dots,h(U_k)$ 
belong to $\cL_H$.  Therefore each of them is contained in a single element 
of the partition $\{U_1,U_2,\dots,U_k\}$.  Since $h$ is an invertible map, 
the sets $h(U_1),h(U_2),\dots,h(U_k)$ themselves form a partition of $X$.
Hence we have a finite partition of $X$ into nonempty sets each element of 
which is contained in a single element of another partition of $X$ into 
nonempty sets.  Since both partitions consist of the same number of sets, 
this is possible only if both partitions are the same.  In other words, $h$ 
maps the sets $U_1,U_2,\dots,U_k$ onto one another.  We conclude that the 
subgroup $H$ is contained in $\St_{\cG}(U_1,U_2,\dots,U_k)$, the stabilizer 
of the partition.  Besides, $H$ contains the individual stabilizer 
$\Sti_{\cG}(U_1,U_2,\dots,U_k)$.  Indeed, it follows from Lemma 
\ref{direct-product-of-local-subgroups} that 
$\Sti_{\cG}(U_1,U_2,\dots,U_k)$ is generated by the local subgroups 
$\cG_{U_i}$, $1\le i\le k$, which are contained in $H$.

Since $H\subset\St_{\cG}(U_1,U_2,\dots,U_k)$, the group $H$ acts on the set
$\{U_1,U_2,\dots,U_k\}$.  Take any orbit $\mathcal{O}$ of this action.  
Then the union of all sets in $\mathcal{O}$ is invariant under the action 
of $H$ on $X$.  Note that this union is a nonempty clopen set.  Minimality 
of the latter action implies that the union coincides with $X$.  Hence 
$\mathcal{O}=\{U_1,U_2,\dots,U_k\}$ so that the action of $H$ on 
$\{U_1,U_2,\dots,U_k\}$ is transitive.  In other words, all elements of the 
partition can be mapped onto one another by elements of $H$.  As a 
consequence, all elements of the partition are of the same cardinality.  
Since $\{U_1,U_2,\dots,U_k\}$ is not the partition into points, it follows 
that each $U_i$ consists of more than one point.

Let $\{V_1,V_2,\dots,V_\ell\}$ be an arbitrary partition of $X$ into clopen 
sets such that each $V_j$ consists of more than one point and 
$\Sti_{\cG}(V_1,V_2,\dots,V_\ell)\subset 
H\subset\St_{\cG}(V_1,V_2,\dots,V_\ell)$.  Lemma 
\ref{direct-product-of-local-subgroups} implies that the sets 
$V_1,V_2,\dots,V_\ell$ belong to $\cL_H$, just as the sets 
$U_1,U_2,\dots,U_k$ do.  Note that the group $\cG$ acts minimally on $X$ 
since its subgroup $H$ acts minimally.  Hence we can apply Lemma 
\ref{local-subgroups-in-stabilizers}.  It follows from the lemma that each 
element of either of the partitions $\{U_1,U_2,\dots,U_k\}$ and 
$\{V_1,V_2,\dots,V_\ell\}$ is contained in a single element of the other 
partition.  Since both partitions consist of nonempty sets, this is 
possible only if both partitions are the same.
\end{proof}

Let us discuss the formulation of Proposition \ref{maxsub-partition}, 
specifically, why all clopen sets involved should consist of more than one 
point.  If $x$ is an isolated point of the topological space $X$, then 
$\{x\}$ is a nonempty clopen set but the associated local subgroup 
$\cG_{\{x\}}$ is trivial.  Hence the condition that a clopen set $U$ 
contains more than one point is necessary for the local subgroup $\cG_U$ to 
be nontrivial.  In the case when the ample group $\cG$ acts minimally on 
$X$, one can show that this condition is also sufficient.  Further, if $X$ 
is a finite set and $X=U_1\sqcup U_2\sqcup\dots\sqcup U_k$ is the partition 
into points, then $\Sti_{\cG}(U_1,U_2,\dots,U_k)$ is a trivial group while 
$\St_{\cG}(U_1,U_2,\dots,U_k)=\cG$.  The requirement that each $U_i$ 
consists of more than one point rules out this degenerate case.  Without it 
we would not be able to show uniqueness of the partition.  Besides, the 
requirement allows to use Proposition \ref{maxsub-partition} in conjunction 
with Lemma \ref{local-subgroups-in-stabilizers}.

Proposition \ref{maxsub-partition} is going to be crucial for the proofs of 
all subsequent results on maximal subgroups.  The downside of this is that 
all those results will require the ample group to have Property 
\ref{prop-E}.  We shall discuss this property and provide examples of ample
groups with it in Section \ref{property-E} below.

Our next result concerns the stabilizers of closed sets that are not open.

\begin{theorem}\label{maxsub-not-clopen}
Let $\cG\subset\Homeo(X)$ be an ample group that acts minimally on $X$ and 
has Property \ref{prop-E}.  Suppose $Y\subset X$ is a closed set that is 
not open.  Then the stabilizer $\St_{\cG}(Y)$ is a maximal subgroup of 
$\cG$ if and only if it acts minimally when restricted to $Y$, in which 
case $Y$ is necessarily nowhere dense in $X$.
\end{theorem}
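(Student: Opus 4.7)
The plan is to handle both implications separately. For the forward direction, suppose $\St_{\cG}(Y)$ is maximal in $\cG$. Since $Y$ is closed and not open, $Y$ is neither $\emptyset$ nor $X$ (both being open), so in particular $\St_{\cG}(Y)\ne\cG$. The ``furthermore'' clause of Theorem \ref{maxsub-nonminimal} then applies directly and yields that $\St_{\cG}(Y)$ acts minimally on $Y$. Combining this minimal action with Lemma \ref{minim-nowhere-dense} and the fact that $Y$ is not clopen forces $Y$ to be nowhere dense.

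For the converse, assume $\St_{\cG}(Y)$ acts minimally on $Y$. The same application of Lemma \ref{minim-nowhere-dense} shows $Y$ is nowhere dense, so $X\setminus Y$ is open and dense in $X$; also $\St_{\cG}(Y)\ne\cG$ by Lemma \ref{stab-closed1}. To establish maximality, I fix an arbitrary $g\in\cG\setminus\St_{\cG}(Y)$, set $H_g=\langle\St_{\cG}(Y)\cup\{g\}\rangle$, and aim to prove $H_g=\cG$. By Lemma \ref{same-orbits}, the groups $H_g$ and $\cG$ have the same orbits, so $H_g$ acts minimally on $X$ (Lemma \ref{minimality}). Since every subset of a finite totally disconnected space is clopen while $Y$ is not open, $X$ must be infinite, and Lemma \ref{finite-or-Cantor} then identifies $X$ as a Cantor set; in particular every nonempty clopen set has more than one point. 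Finally, for any clopen $U\subset X\setminus Y$, the local group $\cG_U$ fixes $Y$ pointwise, hence $\cG_U\subset\St_{\cG}(Y)\subset H_g$.

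These facts let me apply Proposition \ref{maxsub-partition} to $H_g$, producing a partition $X=U_1\sqcup\dots\sqcup U_k$ into clopen sets, each with more than one point, satisfying $\Sti_{\cG}(U_1,\dots,U_k)\subset H_g\subset\St_{\cG}(U_1,\dots,U_k)$. The crux of the proof, and its main obstacle, is to show $k=1$, which yields $H_g\supset\cG_X=\cG$ and completes the argument. Suppose for contradiction that $k\ge 2$. Since $X\setminus Y$ is dense while $U_1$ and $U_2$ are nonempty open sets, I can choose points $x\in U_1\cap(X\setminus Y)$ and $y\in U_2\cap(X\setminus Y)$, and then clopen neighborhoods $V_x\ni x$ and $V_y\ni y$ contained in $X\setminus Y$ (these exist because the topology on $X$ is generated by clopen sets). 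The union $V=V_x\cup V_y$ is a clopen subset of $X\setminus Y$, so $\cG_V\subset H_g$; yet $V$ meets both $U_1$ and $U_2$ and hence is not contained in any single $U_i$. This directly contradicts Lemma \ref{local-groups-in-stabilizers}, so $k=1$ and $H_g=\cG$, proving $\St_{\cG}(Y)$ maximal.
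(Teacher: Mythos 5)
Your proposal is correct and follows essentially the same route as the paper: Theorem \ref{maxsub-nonminimal} plus Lemma \ref{minim-nowhere-dense} for the forward direction, and for the converse Lemma \ref{same-orbits}, Proposition \ref{maxsub-partition}, and Lemma \ref{local-groups-in-stabilizers} applied to a clopen subset of $X\setminus Y$ meeting more than one partition element. The only cosmetic differences are that you invoke Lemma \ref{finite-or-Cantor} to ensure clopen sets have more than one point (the paper instead takes a union of clopen neighborhoods of two distinct points of $X\setminus Y$) and you derive $k=1$ by contradiction using two partition elements rather than a set meeting all $k$.
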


\begin{proof}
If the stabilizer $\St_{\cG}(Y)$ is a maximal subgroup of $\cG$, then the 
induced action of $\St_{\cG}(Y)$ on $Y$ is minimal due to Theorem 
\ref{maxsub-nonminimal}.  Since the set $Y$ is not clopen, Lemma 
\ref{minim-nowhere-dense} implies that it is nowhere dense in $X$.

Now assume that $\St_{\cG}(Y)$ acts minimally when restricted to $Y$.  Just 
as above, this implies that $Y$ is nowhere dense in $X$.  Also, since the 
closed set $Y$ is not open, it is different from the empty set and $X$.  
Then $\St_{\cG}(Y)\ne\cG$ due to Lemma \ref{stab-closed1}.  To prove that 
the stabilizer $\St_{\cG}(Y)$ is a maximal subgroup of $\cG$, we are going 
to show that for any $g\in\cG\setminus\St_{\cG}(Y)$ the group 
$H_g=\langle\St_{\cG}(Y)\cup\{g\}\rangle$ generated by $\St_{\cG}(Y)$ and 
$g$ coincides with $\cG$.  By Lemma \ref{same-orbits}, the group $H_g$ has 
the same orbits as $\cG$.  As a consequence, $H_g$ acts minimally on $X$.  
To apply Proposition \ref{maxsub-partition} to the group $H_g$, we need it 
to contain a local subgroup $\cG_U$ for some clopen set $U$ containing more 
than one point.  If $U\subset X\setminus Y$ then any map in $\cG_U$ fixes 
all points of $Y$ so that $\cG_U\subset\Stp_{\cG}(Y)\subset\St_{\cG}(Y) 
\subset H_g$.  Note that the set $X\setminus Y$ is open but not closed.  
Therefore it is infinite.  Any point $x\in X\setminus Y$ has a clopen 
neighborhood $V_x\subset X\setminus Y$.  Take any distinct points $x,y\in 
X\setminus Y$.  Then $V_x\cup V_y$ is a clopen set consisting of more than 
one point.  Besides, $V_x\cup V_y\subset X\setminus Y$ so that 
$\cG_{V_x\cup V_y}\subset H_g$.  Now Proposition \ref{maxsub-partition} 
implies that $\Sti_{\cG}(U_1,U_2,\dots,U_k)\subset 
H_g\subset\St_{\cG}(U_1,U_2,\dots,U_k)$, where $\cP=\{U_1,U_2,\dots,U_k\}$ 
is a partition of $X$ into clopen sets, each consisting of more than one 
point.  Since the set $Y$ is nowhere dense in $X$, its complement 
intersects any nonempty open set.  In particular, for any $i$, $1\le i\le 
k$ we can find a point $x_i\in U_i$ that does not belong to $Y$.  Let 
$V=V_{x_1}\cup V_{x_2}\cup\dots\cup V_{x_k}$.  Then $V$ is a clopen subset 
of $X\setminus Y$, which implies that $\cG_V\subset H_g$.  Lemma 
\ref{local-subgroups-in-stabilizers} further implies that $V$ is contained 
in a single element of the partition $\cP$.  However, $V$ was constructed 
so as to intersect every element of $\cP$.  We conclude that $\cP$ consists 
of a single element, $\cP=\{X\}$.  Since $\Sti_{\cG}(X)=\St_{\cG}(X)=\cG$, 
we obtain that $H_g=\cG$.
\end{proof}

If a closed set $Y\subset X$ is clopen then for any ample group
$\cG\subset\Homeo(X)$ that acts minimally on $X$, the stabilizer 
$\St_{\cG}(Y)$ acts minimally when restricted to $Y$ (due to 
Lemma \ref{minim-clopen}).  If $Y$ is finite then $\St_{\cG}(Y)$ acts 
minimally on it if and only if $Y$ is contained in a single orbit of
$\cG$.  If the closed set $Y$ is neither clopen nor finite, it is not clear
whether minimality of the action of $\St_{\cG}(Y)$ on $Y$ is at all 
possible.  In other words, it is not clear whether Theorem 
\ref{maxsub-not-clopen} yields any new examples of maximal subgroups 
compared with Theorem \ref{maxsub-finite}.   We address this question in 
Section \ref{nowhere-dense} below.  In view of Lemma
\ref{finite-or-Cantor}, the assumptions of Theorem \ref{maxsub-not-clopen} 
can be satisfied only in the case when $X$ is a Cantor set.  As it turns 
out, there are plenty of new examples in that case (see Proposition 
\ref{plenty-of-examples} below).

The next three theorems contain our main results on the stabilizers of 
partitions of $X$ into clopen sets.  Theorems \ref{maxsub-clopen} and 
\ref{maxsub-3-or-more} tell when such stabilizers are maximal subgroups of 
the corresponding ample groups (Theorem \ref{maxsub-clopen} for partitions 
into two sets, Theorem \ref{maxsub-3-or-more} for partitions into three or 
more sets).  Theorem \ref{maxsub-2-or-more} provides a characterization of 
those maximal subgroups of ample groups that are the stabilizers of 
partitions into clopen sets (just as Theorem \ref{maxsub-nonminimal} 
provides a characterization of those maximal subgroups that are the 
stabilizers of closed sets).  Theorem \ref{maxsub-clopen} also treats the 
stabilizers of clopen sets.

\begin{theorem}\label{maxsub-clopen}
Let $\cG\subset\Homeo(X)$ be an ample group that acts minimally on $X$ and 
has Property \ref{prop-E}.  Suppose $U$ is a clopen set different from the 
empty set and $X$.  Then $\St_{\cG}(U,X\setminus U)$ is a maximal subgroup 
of $\cG$ unless both $U$ and $X\setminus U$ consist of a single point, in 
which case $\St_{\cG}(U,X\setminus U)=\cG$.  If $U$ cannot be mapped onto 
$X\setminus U$ by an element of $\cG$ then 
$\St_{\cG}(U)=\St_{\cG}(U,X\setminus U)$; otherwise $\St_{\cG}(U)$ is a 
subgroup of index $2$ in $\St_{\cG}(U,X\setminus U)$.
\end{theorem}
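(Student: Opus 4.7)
The plan is to dispose of the elementary claims first, and then concentrate on the maximality assertion.

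If $|U|=|X\setminus U|=1$, then $\{U,X\setminus U\}$ is the partition of the two-point space $X$ into singletons, and every homeomorphism of $X$ preserves it, so $\St_\cG(U,X\setminus U)=\cG$; in all other cases Lemma \ref{partition-stab} guarantees $\St_\cG(U,X\setminus U)\ne\cG$. For the index claim, I would note that $\Sti_\cG(U,X\setminus U)=\St_\cG(U)$ because any homeomorphism that preserves $U$ automatically preserves $X\setminus U$; this is the kernel of the action of $\St_\cG(U,X\setminus U)$ on the two-element set $\{U,X\setminus U\}$. If no element of $\cG$ carries $U$ onto $X\setminus U$ then this action is trivial and the two subgroups coincide; otherwise any such element automatically belongs to $\St_\cG(U,X\setminus U)$ (since it swaps the two parts) and provides the nontrivial coset, giving index $2$.

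For the main assertion, I would fix an arbitrary $g\in\cG\setminus\St_\cG(U,X\setminus U)$ and show that the group $H_g=\langle\St_\cG(U,X\setminus U),g\rangle$ equals $\cG$. After swapping $U$ and $X\setminus U$ if necessary I may assume $|U|\ge2$. By Lemma \ref{direct-product-of-local-groups} the individual stabilizer $\Sti_\cG(U,X\setminus U)\subset H_g$ contains both local groups $\cG_U$ and $\cG_{X\setminus U}$. Since $g(U)\ne U$, I have $g\notin\St_\cG(U)$, and because $\St_\cG(U)$ acts minimally on the clopen set $U$ by Lemma \ref{minim-clopen}, Lemma \ref{same-orbits} applied with $Y=U$ shows that $\langle\St_\cG(U),g\rangle$, hence $H_g$, has the same orbits as $\cG$; in particular $H_g$ acts minimally on $X$.

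Next I would invoke Proposition \ref{maxsub-partition} on $H_g$ and the clopen set $U$ (which has more than one point): this produces a unique partition $\cP=\{V_1,\dots,V_k\}$ of $X$ into clopen sets each of cardinality at least two, with $\Sti_\cG(V_1,\dots,V_k)\subset H_g\subset\St_\cG(V_1,\dots,V_k)$. Applying Lemma \ref{local-groups-in-stabilizers} to $\cP$ (no element of which is a singleton) and to the clopen sets $U$ and $X\setminus U$, I conclude that $U\subset V_i$ and $X\setminus U\subset V_j$ for some indices $i,j$. The main obstacle will be to rule out the possibility $i\ne j$: there, disjointness of $V_i$ and $V_j$ together with $U\cup(X\setminus U)=X$ forces $V_i=U$, $V_j=X\setminus U$, $k=2$, and $\cP=\{U,X\setminus U\}$, whence $g\in H_g\subset\St_\cG(\cP)=\St_\cG(U,X\setminus U)$, contradicting the choice of $g$. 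Therefore $i=j$, so $V_i\supset U\cup(X\setminus U)=X$, meaning $\cP=\{X\}$ and $H_g\supset\Sti_\cG(\{X\})=\cG$, which completes the proof.
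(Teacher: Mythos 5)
Your proposal is correct and follows essentially the same route as the paper's proof: the singleton and index-two claims are handled identically, and the maximality argument uses the same chain (Lemma \ref{minim-clopen} plus Lemma \ref{same-orbits} to get minimality of $H_g$, Lemma \ref{direct-product-of-local-groups} to get the local groups $\cG_U$ and $\cG_{X\setminus U}$ inside $H_g$, then Proposition \ref{maxsub-partition} and Lemma \ref{local-groups-in-stabilizers} to pin the resulting partition down to $\{U,X\setminus U\}$ or $\{X\}$ and exclude the former). The only cosmetic difference is that you normalize $|U|\ge2$ by swapping the two parts, where the paper just notes that at least one part has more than one point.
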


\begin{proof}
For any invertible map $f:X\to X$ we have $f(U)=U$ if and only if 
$f(X\setminus U)=X\setminus U$.  It follows that $\St_{\cG}(U)$ coincides 
with another set stabilizer $\St_{\cG}(X\setminus U)$ as well as with the 
individual stabilizer $\Sti_{\cG}(U,X\setminus U)$.  The stabilizer 
$\St_{\cG}(U,X\setminus U)$ of the partition $X=U\sqcup(X\setminus U)$ acts 
on the set $\{U,X\setminus U\}$ and $\Sti_{\cG}(U,X\setminus U)$ is the 
kernel of this action.  If $U$ cannot be mapped onto $X\setminus U$ by an 
element of $\cG$, then the action is trivial so that 
$\St_{\cG}(U,X\setminus U)=\Sti_{\cG}(U,X\setminus U)=\St_{\cG}(U)$.  
Otherwise $f(U)=X\setminus U$ for some $f\in\cG$.  Then also $f(X\setminus 
U)=X\setminus f(U)=U$ so that $f\in\St_{\cG}(U,X\setminus U)$.  Hence the 
action of the group $\St_{\cG}(U,X\setminus U)$ on $\{U,X\setminus U\}$ is 
nontrivial, which implies that $\St_{\cG}(U)=\Sti_{\cG}(U,X\setminus U)$ is 
a subgroup of index $2$ in $\St_{\cG}(U,X\setminus U)$.

If both $U$ and $X\setminus U$ consist of a single point, then 
$X=U\sqcup(X\setminus U)$ is the partition into points, which implies that 
$\St_{\cG}(U,X\setminus U)=\cG$.  Otherwise $\St_{\cG}(U,X\setminus U)\ne 
\cG$ due to Lemma \ref{partition-stab}.  To prove that 
$\St_{\cG}(U,X\setminus U)$ is a maximal subgroup of $\cG$ in the latter 
case, we are going to show that for any 
$g\in\cG\setminus\St_{\cG}(U,X\setminus U)$ the group 
$H_g=\langle\St_{\cG}(U,X\setminus U)\cup\{g\}\rangle$ generated by 
$\St_{\cG}(U,X\setminus U)$ and $g$ coincides with $\cG$.  Since the group 
$\cG$ acts minimally on $X$, it follows from Lemma \ref{minim-clopen} that 
the stabilizer $\St_{\cG}(U)$ acts minimally when restricted to $U$.  Then 
Lemma \ref{same-orbits} implies that the group 
$\langle\St_{\cG}(U)\cup\{g\}\rangle$ generated by $\St_{\cG}(U)$ and $g$ 
has the same orbits as $\cG$.  Since the group $H_g$ contains 
$\langle\St_{\cG}(U)\cup\{g\}\rangle$, it also has the same orbits as 
$\cG$.  As a consequence, $H_g$ acts minimally on $X$.  Further, Lemma 
\ref{direct-product-of-local-subgroups} implies that the stabilizer 
$\St_{\cG}(U,X\setminus U)$ contains the local subgroups $\cG_U$ and 
$\cG_{X\setminus U}$.  Hence $H_g$ contains them as well.  Recall that we 
consider the case when at least one of the sets $U$ and $X\setminus U$ 
consists of more than one point.  Now Proposition \ref{maxsub-partition} 
implies that $\Sti_{\cG}(U_1,U_2,\dots,U_k)\subset 
H_g\subset\St_{\cG}(U_1,U_2,\dots,U_k)$, where $\cP=\{U_1,U_2,\dots,U_k\}$ 
is a partition of $X$ into clopen sets, each consisting of more than one 
point.  By Lemma \ref{local-subgroups-in-stabilizers}, any local subgroup 
$\cG_V$ is a subgroup of $H_g$ if and only if $V\subset U_i$ for some $i$, 
$1\le i\le k$.  In particular, each of the sets $U$ and $X\setminus U$ is 
contained in a single element of the partition $\cP$.  This leaves only two 
possibilities, $\cP=\{U,X\setminus U\}$ or $\cP=\{X\}$.  Since $H_g$ is not 
contained in $\St_{\cG}(U,X\setminus U)$, we conclude that $\cP=\{X\}$.  As 
$\Sti_{\cG}(X)=\St_{\cG}(X)=\cG$, we obtain that $H_g=\cG$.
\end{proof}

\begin{theorem}\label{maxsub-3-or-more}
Let $\cG\subset\Homeo(X)$ be an ample group that acts minimally on $X$ and 
has Property \ref{prop-E}.  Suppose $X=U_1\sqcup U_2\sqcup\dots\sqcup U_k$ 
is a partition of $X$ into at least three clopen sets, each consisting of 
more than one point.  Then $\St_{\cG}(U_1,U_2,\dots,U_k)$, the stabilizer 
of the partition, is a maximal subgroup of $\cG$ if and only if its induced 
action on the set $\{U_1,U_2,\dots,U_k\}$ is transitive.
\end{theorem}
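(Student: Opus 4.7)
The plan is to mirror the strategy of Theorem~\ref{maxsub-clopen}, extended to handle multiple clopen pieces: the forward direction via an $H$-invariant coarsening of $\cP=\{U_1,\dots,U_k\}$, and the backward direction via Proposition~\ref{maxsub-partition}.

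For the forward direction, I proceed by contraposition. Suppose the induced action of $H=\St_{\cG}(U_1,\dots,U_k)$ on $\cP$ is not transitive; let $\mathcal{O}_1,\dots,\mathcal{O}_m$ ($m\ge2$) be its orbits and set $V_j=\bigsqcup_{U\in\mathcal{O}_j}U$.  Then $H\subset\Sti_{\cG}(V_1,\dots,V_m)\subset\St_{\cG}(V_1,\dots,V_m)\subsetneq\cG$, the last inclusion by Lemma~\ref{partition-stab} (the coarser partition is nontrivial since $m\ge2$ and not into points since each $V_j$ contains some $U_i$ with $|U_i|>1$).  To see the first inclusion is strict: if some $\mathcal{O}_j$ contains $U_a\ne U_b$, minimality of $\cG$ together with a generalized $2$-cycle $\de_{W;g}$ produces an element of $\cG_{V_j}$ moving a small clopen piece of $U_a$ into $U_b$, which lies in $\Sti_{\cG}(V_1,\dots,V_m)\setminus H$; if all orbits are singletons (so $H$ acts trivially on $\cP$), I instead use the coarsening $\{U_1\cup U_2,U_3,\dots,U_k\}$ (available because $k\ge3$) and run the same $2$-cycle construction inside $\cG_{U_1\cup U_2}$.

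For the backward direction, take $g\in\cG\setminus H$ and set $H_g=\langle H\cup\{g\}\rangle$; the goal is $H_g=\cG$.  First, $H$ itself acts minimally on $X$: by Lemma~\ref{direct-product-of-local-groups} it contains every $\cG_{U_i}$, and a generalized $2$-cycle argument combined with density of $\cG$-orbits shows $\cG_{U_i}$ acts minimally on $U_i$; transitivity of $H$ on $\cP$ then spreads density across $X$.  Hence $H_g$ acts minimally and contains the local group $\cG_{U_1}$ on the clopen set $U_1$ with $|U_1|>1$.  Proposition~\ref{maxsub-partition} then yields a clopen partition $\cP_0=\{W_1,\dots,W_\ell\}$, each $|W_j|>1$, with $\Sti_{\cG}(\cP_0)\subset H_g\subset\St_{\cG}(\cP_0)$; Lemma~\ref{local-groups-in-stabilizers} applied via $\cG_{U_i}\subset H_g$ forces each $U_i$ into a single element of $\cP_0$, so $\cP_0$ is a coarsening of $\cP$ corresponding to an $H$-invariant partition of $\cP$.

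Finally, if $\cP_0=\cP$ then $H_g\subset\St_{\cG}(\cP)=H$, contradicting $g\notin H$; by the transitivity hypothesis the remaining possibility is $\cP_0=\{X\}$, giving $H_g\supset\Sti_{\cG}(\{X\})=\cG$.  The delicate step I anticipate is justifying this final dichotomy, since transitivity of $H$ on $\cP$ does not obviously exclude nontrivial block systems among the $H$-invariant partitions of $\cP$; the argument must show that the induced partition can only be the trivial one or the one into singletons.  In effect transitivity must here behave like primitivity, which I would try to recover by feeding any purported block $B$ back into the forward-direction generalized $2$-cycle construction inside $\cG_{\bigcup B}$ to exhibit elements of $\St_{\cG}(\cP_0)\setminus H$ incompatible with the choice of $g$.
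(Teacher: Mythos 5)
Your forward direction is sound and is only a cosmetic variant of the paper's: where you pass to the coarser partition $\{V_1,\dots,V_m\}$ built from the orbits of $H$ on $\cP=\{U_1,\dots,U_k\}$ (with a separate coarsening when the action is trivial), the paper takes the stabilizer of a single orbit-union $U$ and notes via Lemma \ref{local-groups-in-stabilizers} that $\cG_U$ and $\cG_{X\setminus U}$ cannot both lie in $H$ when $k\ge3$; either route produces a proper intermediate subgroup. Your reduction of the backward direction to Proposition \ref{maxsub-partition} and the resulting coarsening $\cP_0$ of $\cP$ also matches the paper exactly.

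The gap is the one you flag yourself, and your proposed repair does not close it. You must rule out that $\cP_0$ is a proper nontrivial coarsening of $\cP$, i.e.\ that the $H$-action on $\cP$ admits a nontrivial block system. Running the forward-direction generalized $2$-cycle construction inside $\cG_{\bigcup B}$ for a purported block $B$ only produces elements of $\Sti_{\cG}(\cP_0)$, which already lie in $H_g$; exhibiting elements of $\St_{\cG}(\cP_0)\setminus H$ contradicts nothing, since $H_g$ is supposed to be larger than $H$, and no incompatibility with the choice of $g$ arises. The missing observation (the paper's) is that the induced action of $H$ on $\cP$ contains every transposition of blocks, not merely a transitive group: if $h\in H$ maps $U_{i_2}$ onto $U_i$ with $i\ne i_2$, the generalized $2$-cycle $f=\de_{U_{i_2};h}$ swaps $U_{i_2}$ and $U_i$ and fixes all points outside them, so $f$ preserves $\cP$ and hence $f\in H\subset H_g\subset\St_{\cG}(\cP_0)$. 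Now if some $V_j\in\cP_0$ contains two distinct blocks $U_{i_1}$ and $U_{i_2}$, take any third index $i$ (here $k\ge3$ enters) and such an $f$ with $f(U_{i_2})=U_i$: since $f(U_{i_1})=U_{i_1}\subset V_j$ and $f(V_j)\in\cP_0$, we get $f(V_j)=V_j$ and therefore $U_i\subset V_j$; as $i$ was arbitrary, $V_j=X$, so $\cP_0=\{X\}$ and $H_g\supset\Sti_{\cG}(X)=\cG$. In short, transitivity plus the $2$-cycle construction upgrades the action on $\cP$ to one containing all transpositions, which is what forces primitivity; you have all the ingredients but do not assemble them, and the construction you point to is the wrong one.
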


\begin{proof}
Let $H=\St_{\cG}(U_1,U_2,\dots,U_k)$.  Since the partition 
$\cP=\{U_1,U_2,\dots,U_k\}$ is neither trivial nor the partition into 
points, Lemma \ref{partition-stab} implies that $H\ne\cG$.  First we 
consider the case when the induced action of the group $H$ on $\cP$ is not 
transitive.  Take any orbit $\mathcal{O}$ of this action and let $U$ be the 
union of all sets in $\mathcal{O}$.  Then $U$ is a clopen set invariant 
under the action of $H$ on $X$.  It follows that $H\subset\St_{\cG}(U)$.  
Note that the orbit $\mathcal{O}$ is different from the empty set and 
$\cP$.  Therefore $U$ is different from the empty set and $X$.  Then 
$\St_{\cG}(U)\ne\cG$ due to Lemma \ref{stab-closed1}.  Recall that the 
pointwise stabilizer $\Stp_{\cG}(U)$ and the rigid stabilizer 
$\RiSt_{\cG}(U)$ are subgroups of $\St_{\cG}(U)$.  It is easy to see that 
$\Stp_{\cG}(U)=\cG_{X\setminus U}$ and $\RiSt_{\cG}(U)=\cG_U$.  On the 
other hand, the local subgroups $\cG_U$ and $\cG_{X\setminus U}$ cannot 
both be subgroups of $H$.  Indeed, otherwise it would follow from Lemma 
\ref{local-subgroups-in-stabilizers} that each of the sets $U$ and 
$X\setminus U$ is contained in a single element of the partition $\cP$, 
which is not possible as $k\ge3$.  We conclude that $H\ne\St_{\cG}(U)$.  
Hence $H$ is not a maximal subgroup of $\cG$.

Now consider the case when the induced action of $H$ on the partition $\cP$ 
is transitive.  Let us show that in this case the group $H$ has the same 
orbits as $\cG$.  Any orbit of $H$ has a point in every element of $\cP$.  
Given $x\in X$, for any $i$, $1\le i\le k$ choose a point 
$y_i\in\Orb_H(x)\cap U_i$.  By Lemma \ref{minim-clopen}, the intersection 
of the orbit $\Orb_{\cG}(y_i)=\Orb_{\cG}(x)$ with $U_i$ coincides with the 
orbit of $y_i$ under the action of the stabilizer $\St_{\cG}(U_i)$.  Note 
that $\St_{\cG}(U_i)=\St_{\cG}(X\setminus U_i)=\Sti_{\cG}(U_i,X\setminus 
U_i)$.  By Lemma \ref{direct-product-of-local-subgroups}, the group 
$\Sti_{\cG}(U_i,X\setminus U_i)$ is the internal direct product of local 
subgroups $\cG_{U_i}$ and $\cG_{X\setminus U_i}$.  Since $\cG_{X\setminus 
U_i}$ acts trivially on $U_i$, it follows that the orbit of the point $y_i$ 
under the action of $\St_{\cG}(U_i)$ coincides with its orbit under the 
action of $\cG_{U_i}$.  As each $\cG_{U_i}$ is obviously contained in $H$, 
we obtain that $\Orb_{\cG}(x)\cap U_i\subset\Orb_H(y_i)=\Orb_H(x)$ for all 
$i$, which implies that $\Orb_H(x)=\Orb_{\cG}(x)$.

To prove that $H$ is a maximal subgroup of $\cG$, we need to show that for 
any $g\in\cG\setminus H$ the group $H_g=\langle H\cup\{g\}\rangle$ 
generated by $H$ and $g$ coincides with $\cG$.  Since the group $H$ has the 
same orbits as $\cG$, the same is true for $H_g$.  As a consequence, $H_g$ 
acts minimally on $X$.  Since the group $H$ contains the local subgroups 
$\cG_{U_i}$, $1\le i\le k$, so does $H_g$.  Now Proposition 
\ref{maxsub-partition} implies that 
$\Sti_{\cG}(V_1,V_2,\dots,V_\ell)\subset 
H_g\subset\St_{\cG}(V_1,V_2,\dots,V_\ell)$, where 
$\cP_g=\{V_1,V_2,\dots,V_\ell\}$ is a partition of $X$ into clopen sets, 
each consisting of more than one point.  By Lemma 
\ref{local-subgroups-in-stabilizers}, any local subgroup $\cG_U$ is 
contained in $H_g$ if and only if $U\subset V_j$ for some $j$, $1\le 
j\le\ell$.  In particular, each element of the partition $\cP$ is contained 
in a single element of the partition $\cP_g$.  It follows that each element 
of $\cP_g$ is the union of one or more elements of $\cP$.  Note that 
$\cP_g\ne\cP$ since the group $H_g$ is not contained in $H$.  Hence there 
exists $j$, $1\le j\le\ell$ such that the set $V_j$ contains sets $U_{i_1}$ 
and $U_{i_2}$, where $i_1\ne i_2$.  Take any $i$, $1\le i\le k$, different 
from $i_1$ and $i_2$ (as $k\ge3$, at least one such $i$ exists).  Since the 
group $H$ acts transitively on $\cP$, we have $h(U_{i_2})=U_i$ for some 
$h\in H$.  Note that $U_{i_2}$ and $U_i$ are disjoint clopen sets.  Hence 
the generalized $2$-cycle $f=\de_{U_{i_2};h}$ is defined.  Since $h\in\cG$, 
it follows that $f\in\F(\cG)=\cG$.  We have $f(U_{i_2})=U_i$, 
$f(U_i)=U_{i_2}$, and $f(x)=x$ for any $x$ not in $U_{i_2}$ or $U_i$.  
Therefore $f$ preserves the partition $\cP$ so that $f\in H$.  Then $f\in 
H_g$ so that $f$ preserves the partition $\cP_g$ as well.  In particular, 
$f(V_j)\in\cP_g$.  Since $f(U_{i_1})=U_{i_1}$ and $f(U_{i_2})=U_i$, it 
follows that $f(V_j)=V_j$ and $U_i\subset V_j$.  As $i$ was chosen 
arbitrarily, we conclude that $V_j=X$.  Thus the partition $\cP_g$ is 
trivial, $\cP_g=\{X\}$.  Since $\Sti_{\cG}(X)=\St_{\cG}(X)=\cG$, we obtain 
that $H_g=\cG$.
\end{proof}

\begin{theorem}\label{maxsub-2-or-more}
Let $\cG\subset\Homeo(X)$ be an ample group with Property \ref{prop-E}.  
Suppose $H$ is a maximal subgroup of $\cG$ that acts minimally on $X$ and 
contains a local subgroup $\cG_U$ for some clopen set $U$ containing more 
than one point.  Then $H=\St_{\cG}(U_1,U_2,\dots,U_k)$, where $X=U_1\sqcup 
U_2\sqcup\dots\sqcup U_k$ is a partition of $X$ into clopen sets, each 
containing more than one point.  Moreover, the partition is unique, it 
consists of at least two sets, and the induced action of $H$ on the set 
$\{U_1,U_2,\dots,U_k\}$ is transitive.
\end{theorem}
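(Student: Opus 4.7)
The plan is to deduce Theorem~\ref{maxsub-2-or-more} almost directly from Proposition~\ref{maxsub-partition}, which has already done most of the work. The hypotheses of that proposition are exactly what we are given: $\cG$ is ample with Property~\ref{prop-E}, $H$ is a subgroup of $\cG$ acting minimally on $X$, and $H$ contains a local group $\cG_U$ for some clopen set $U$ with more than one point. So Proposition~\ref{maxsub-partition} immediately yields a unique partition $X=U_1\sqcup U_2\sqcup\dots\sqcup U_k$ into clopen sets each consisting of more than one point, such that
\[
\Sti_{\cG}(U_1,U_2,\dots,U_k)\subset H\subset\St_{\cG}(U_1,U_2,\dots,U_k),
\]
with $H$ acting transitively on $\{U_1,U_2,\dots,U_k\}$. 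This already takes care of the uniqueness and transitivity assertions of the theorem.

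What remains is to upgrade the right-hand inclusion to an equality, $H=\St_{\cG}(U_1,U_2,\dots,U_k)$, and to rule out $k=1$. Both should follow from the maximality of $H$. Since $H$ is maximal, it is in particular a proper subgroup, so $H\ne\cG$. Also, as $H\subset\cG$ and $H$ acts minimally on $X$, the group $\cG$ acts minimally on $X$ too (every $H$-orbit is dense in $X$, hence so is every $\cG$-orbit, and Lemma~\ref{minimality} applies). This puts us in position to invoke Lemma~\ref{partition-stab}.

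If the partition were trivial, i.e.\ $k=1$, then $\Sti_{\cG}(U_1)=\Sti_{\cG}(X)=\cG$ would be contained in $H$, forcing $H=\cG$ and contradicting maximality; this rules out $k=1$ and therefore forces $k\ge 2$. The partition into one-point sets is ruled out by construction, since every $U_i$ contains more than one point. So the partition $\{U_1,U_2,\dots,U_k\}$ is neither trivial nor the partition into points, and Lemma~\ref{partition-stab} gives $\St_{\cG}(U_1,U_2,\dots,U_k)\ne\cG$. Consequently we have a chain
\[
H\;\subset\;\St_{\cG}(U_1,U_2,\dots,U_k)\;\subsetneq\;\cG
\]
of subgroups of $\cG$, and the maximality of $H$ forces the first inclusion to be an equality. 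Thus $H=\St_{\cG}(U_1,U_2,\dots,U_k)$, completing the proof. The entire argument is essentially a bookkeeping step combining Proposition~\ref{maxsub-partition} with maximality and Lemma~\ref{partition-stab}; there is no real obstacle beyond correctly identifying which case of Lemma~\ref{partition-stab} applies.
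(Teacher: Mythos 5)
Your proposal is correct and follows essentially the same route as the paper: invoke Proposition~\ref{maxsub-partition} to get the sandwiched stabilizers together with uniqueness and transitivity, rule out the trivial partition and the partition into points, apply Lemma~\ref{partition-stab} to see the collective stabilizer is proper, and conclude by maximality. The one small addition you make — explicitly noting that minimality of $H$ on $X$ forces minimality of $\cG$ so that Lemma~\ref{partition-stab} applies — is a detail the paper leaves implicit, and it is correct.
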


\begin{proof}
By Proposition \ref{maxsub-partition}, there exists a partition 
$X=U_1\sqcup U_2\sqcup\dots\sqcup U_k$ such that each $U_i$ is a clopen set 
consisting of more than one point and $\Sti_{\cG}(U_1,U_2,\dots,U_k)\subset 
H\subset\St_{\cG}(U_1,U_2,\dots,U_k)$.  Moreover, the partition is unique 
and the induced action of $H$ on the set $\{U_1,U_2,\dots,U_k\}$ is 
transitive.  The partition cannot be trivial (with $k=1$ and $U_1=X$) as 
$\Sti_{\cG}(X)=\St_{\cG}(X)=\cG$ while $H\ne\cG$.  Hence $k\ge2$.  Besides, 
this is clearly not the partition into points.  Then 
$\St_{\cG}(U_1,U_2,\dots,U_k)\ne\cG$ due to Lemma \ref{partition-stab}.  
Since $H\subset\St_{\cG}(U_1,U_2,\dots,U_k)$ and $H$ is a maximal subgroup 
of $\cG$, it follows that $H=\St_{\cG}(U_1,U_2,\dots,U_k)$.
\end{proof}

\section{Nowhere dense closed sets}\label{nowhere-dense}

In this section we present a construction of a closed, nowhere dense subset 
$Y$ of a topological space $X$ and a group $H$ of homeomorphisms of $X$ 
that leave $Y$ invariant.  The closed set $Y$ is a Cantor set and the group 
$H$ acts minimally when restricted to $Y$.  In the case when $X$ is a 
Cantor set, the construction helps, given an ample group 
$\cG\subset\Homeo(X)$ that acts minimally on $X$, to produce uncountably 
many infinite, nowhere dense closed sets $Y$ such that the stabilizer 
$\St_{\cG}(Y)$ acts minimally on $Y$.  By Theorem \ref{maxsub-not-clopen}, 
all those stabilizers are maximal subgroups of $\cG$ provided that the 
group $\cG$ has Property \ref{prop-E}.  Besides, it follows from Lemma 
\ref{stab-closed2} that all those stabilizers are different from one
another and from the stabilizers of other closed sets.

The construction is general and we do not assume the topological space $X$ 
to be a Cantor set.  Suppose $U_0\supset U_1\supset U_2\supset\dots$ is a 
nested sequence of nonempty clopen subsets of $X$ and $x_0$ is a common 
point of all these sets.  Further suppose that for any $n\ge1$ we have a 
homeomorphism $g_n:X\to X$ such that the set $g_n(U_n)$ is disjoint from 
$U_n$ and, moreover, the union $U_n\cup g_n(U_n)$ is a proper subset of 
$U_{n-1}$.  Let $H$ be a subgroup of $\Homeo(X)$ generated by the 
generalized $2$-cycles $\de_{U_n;g_n}$, $n=1,2,\dots$  Let 
$Y=\overline{\Orb_H(x_0)}$, the closure of the orbit of the point $x_0$ 
under the natural action of the group $H$ on $X$.  Then $Y$ is a closed set 
invariant under the action of $H$.  Assuming all maps $g_n$, $n\ge1$ are 
taken from a group $G\subset\Homeo(X)$, the group $H$ is a subgroup of the 
ample group $\cG=\F(G)$ so that $H\subset\St_{\cG}(Y)$.  For any $n\ge1$, 
we let $f^{(0)}_n=\id_X$ and $f^{(1)}_n=\de_{U_n;g_n}$.  Note that 
$f^{(0)}_n(U_n)=U_n$ and $f^{(1)}_n(U_n)=g_n(U_n)$ are subsets of 
$U_{n-1}$.  Hence for any infinite string $\xi_1\xi_2\xi_3\dots$ of 0s and 
1s we have a nested sequence of clopen sets $U_0\supset 
f^{(\xi_1)}_1(U_1)\supset f^{(\xi_1)}_1f^{(\xi_2)}_2(U_2) 
\supset\dots$.  The intersection of these sets is nonempty.

\begin{lemma}\label{example-nowhere-dense}
In terms of the above construction, suppose that for any infinite string 
$\xi_1\xi_2\xi_3\dots$ of 0s and 1s the intersection of the nested sets 
$U_0\supset f^{(\xi_1)}_1(U_1)\supset f^{(\xi_1)}_1f^{(\xi_2)}_2(U_2) 
\supset\dots$ consists of a single point.  Then $Y$ is a Cantor set that is 
nowhere dense in $X$ and the group $H$ acts minimally when restricted to 
$Y$.
\end{lemma}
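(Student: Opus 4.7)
The plan is to construct an explicit coding homeomorphism $\Psi\colon\{0,1\}^{\bN}\to Y$ and then deduce all four claims from it. Define $\Psi(\xi)$ to be the unique point in the nested intersection
\[
\bigcap_{n\ge 0}f^{(\xi_1)}_1 f^{(\xi_2)}_2\cdots f^{(\xi_n)}_n(U_n);
\]
by hypothesis this is a singleton. Continuity of $\Psi$ follows from the standard fact that, in a compact metric space, a decreasing sequence of nonempty closed sets with singleton intersection has diameter tending to zero. Injectivity is immediate from looking at the first coordinate where $\xi$ and $\eta$ disagree: the level-$n$ sets then lie in the disjoint images of $U_n$ and $g_n(U_n)$. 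To identify $\Psi(\{0,1\}^{\bN})$ with $Y$, I would show by induction on word length (using that $\de_{U_j;g_j}$ has support in $U_{j-1}$ and therefore fixes any point of $g_{m_1}(U_{m_1})$ whenever $m_1<j$) that every element of $\Orb_H(x_0)$ has the canonical form $g_{m_1}g_{m_2}\cdots g_{m_k}(x_0)=f^{(\xi_1)}_1\cdots f^{(\xi_N)}_N(x_0)$ with $m_1<m_2<\cdots<m_k$ and $\xi_{m_i}=1$. Combined with the fact that $\bigcap_n U_n=\{x_0\}$ (a special case of the hypothesis), this shows $\Psi$ sends eventually-zero sequences onto $\Orb_H(x_0)$; compactness of the image then forces $\Psi(\{0,1\}^{\bN})=\overline{\Orb_H(x_0)}=Y$. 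A continuous bijection between compact Hausdorff spaces is a homeomorphism, so $Y$ is a Cantor set.

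For nowhere-density, I would observe that any clopen neighborhood $V_n(\xi):=f^{(\xi_1)}_1\cdots f^{(\xi_n)}_n(U_n)$ of $\Psi(\xi)$ contains the nonempty clopen set $f^{(\xi_1)}_1\cdots f^{(\xi_n)}_n(U_n\setminus(U_{n+1}\cup g_{n+1}(U_{n+1})))$. Any point $z$ there cannot equal $\Psi(\eta)$ for any $\eta$: matching $\xi$ up to the first disagreement with $\eta$ leads either to a contradiction at an earlier level (disjoint level-sets) or at the $(n+1)$-th level, where $V_{n+1}(\eta)$ lies in the omitted $f^{(\xi_1)}_1\cdots f^{(\xi_n)}_n(U_{n+1}\cup g_{n+1}(U_{n+1}))$. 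Hence $\Psi(\xi)$ has arbitrarily small clopen neighborhoods meeting $X\setminus Y$, so $Y$ has empty interior. For minimality, given $\Psi(\xi),\Psi(\eta)\in Y$ and any basic open $V_n(\eta)\cap Y$, the element $h=\bigl(f^{(\eta_1)}_1\cdots f^{(\eta_n)}_n\bigr)\bigl(f^{(\xi_1)}_1\cdots f^{(\xi_n)}_n\bigr)^{-1}\in H$ sends $\Psi(\xi)$ (whose preimage under the second factor lies in $U_n$) into $V_n(\eta)$, so every $H$-orbit is dense in $Y$.

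For local finiteness, I would prove by induction on $N$ that $H_N=\langle\de_{U_1;g_1},\dots,\de_{U_N;g_N}\rangle$ is finite, starting from $H_1\cong\bZ/2$. The key structural observation is that for $j\ge 2$ the generator $\de_{U_j;g_j}$ has support in $U_{j-1}\subset U_1$, hence fixes $g_1(U_1)$ and $X\setminus U_1$ pointwise, while $\de_{U_1;g_1}$ interchanges $U_1$ and $g_1(U_1)$ via $g_1$ and fixes the complement of $U_1\cup g_1(U_1)$ pointwise. Therefore every element of $H_N$ fixes $W_1:=U_0\setminus(U_1\cup g_1(U_1))$ and $X\setminus U_0$ pointwise, so the restriction homomorphism $H_N\to\Homeo(U_1\sqcup g_1(U_1))$ is injective. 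Identifying $g_1(U_1)$ with $U_1$ via $g_1$, I would identify the image with the wreath product $K_{N-1}\wr\bZ/2$, where $K_{N-1}\subset\Homeo(U_1)$ is generated by the restrictions $\de_{U_2;g_2}|_{U_1},\dots,\de_{U_N;g_N}|_{U_1}$. The data $(U_1\supset U_2\supset\cdots\supset U_N;g_2,\dots,g_N)$ satisfies the same hypotheses with one fewer generator, so the inductive hypothesis yields $|K_{N-1}|<\infty$, whence $|H_N|=2|K_{N-1}|^2<\infty$. The main obstacle is verifying this wreath product identification carefully: in particular, showing that conjugation of the embedded generators $(\de_{U_j;g_j}|_{U_1},\id)$ by the swap $\de_{U_1;g_1}$ produces $(\id,\de_{U_j;g_j}|_{U_1})$, so that the image really contains both base copies of $K_{N-1}$ rather than only a diagonal subgroup.
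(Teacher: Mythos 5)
Your proposal is correct, and the first half (the coding map $\Psi$, its injectivity, continuity, surjectivity onto $Y$ via the eventually-zero sequences, and the nowhere-density argument using the omitted middle piece $U_n\setminus(U_{n+1}\cup g_{n+1}(U_{n+1}))$) is essentially the paper's argument with the map there called $\alpha$. Where you genuinely diverge is in the last two claims. For minimality, the paper first builds an abstract model: the group $\Gamma=\langle\gamma_1,\gamma_2,\dots\rangle$ acting on $\Xi=\{0,1\}^{\bN}$, proves $\Gamma$ acts minimally on $\Xi$, establishes the conjugacy $f^{(1)}_n\alpha=\alpha\gamma_n$, and transports minimality across $\alpha$; you instead exhibit, for any two points of $Y$ and any basic clopen neighborhood $V_n(\eta)\cap Y$ of the target, an explicit element $h=\bigl(f^{(\eta_1)}_1\cdots f^{(\eta_n)}_n\bigr)\bigl(f^{(\xi_1)}_1\cdots f^{(\xi_n)}_n\bigr)^{-1}\in H$ carrying one into the neighborhood of the other. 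Your route is shorter and avoids the model entirely; the paper's conjugacy $f^{(1)}_n\alpha=\alpha\gamma_n$ costs more to set up but is then reused (it also drives the paper's surjectivity argument and makes the invariance of $\alpha^{-1}(Y_0)$ transparent). For local finiteness, the paper decomposes $X$ into the annuli $W_k\setminus W_{k+1}$ and shows each generator restricts on every annulus to an element of an explicit finite group of generalized permutations of the $2^k$ sets $f^{(w)}(U_k)$, so that $H_n$ embeds in a finite product of finite groups; you instead run an induction giving $H_N\cong K_{N-1}\wr\bZ/2$, which is cleaner and additionally identifies $H_N$ as the iterated wreath product $\bZ/2\wr\cdots\wr\bZ/2$ of order $2^{2^N-1}$. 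The one step you rightly flag — that conjugating $\de_{U_j;g_j}$ ($j\ge2$) by $\de_{U_1;g_1}$ yields $\de_{g_1(U_j);\,g_1g_jg_1^{-1}}$, which under the identification of $g_1(U_1)$ with $U_1$ via $g_1$ becomes the second base copy of the same generator — does go through, exactly by Lemma \ref{gen-perm-conj}, so the image is the full wreath product and not a diagonal. Both approaches to local finiteness use only the nesting data and not the singleton-intersection hypothesis, which is needed only for the topological claims.
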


\begin{proof}
We keep the same notation as used in the construction.  First let us 
describe an abstract model behind the construction.  Let 
$\Xi=\{0,1\}^{\bN}$.  We regard any element $\xi\in\Xi$ as an infinite 
string $\xi=\xi_1\xi_2\xi_3\dots$, where each $\xi_i\in\{0,1\}$.  We endow 
$\Xi$ with the product topology (assuming the discrete topology on 
$\{0,1\}$).  Then $\Xi$ is a Cantor set.  Let $\{0,1\}^*$ be the set of all 
finite strings of 0s and 1s (including the empty string $\varnothing$).  
For any $w\in\{0,1\}^*$ we denote by $C_w$ the set of all infinite strings 
in $\Xi$ that begin with $w$.  Sets of the form $C_w$ are called 
\emph{cylinders}, they are clopen and form a base of the topology on $\Xi$.

For any $n\ge1$ let $\gamma_n$ be a transformation of $\Xi$ that changes 
only the $n$-th character in any infinite string that begins with (at 
least) $n-1$ consecutive zeros, and fixes all the other elements of $\Xi$.  
The transformation $\gamma_n$ is clearly an involution.  Besides, it is 
continuous.  Hence $\gamma_n\in\Homeo(\Xi)$.  Let $\Gamma$ be the subgroup 
of $\Homeo(\Xi)$ generated by $\gamma_1,\gamma_2,\gamma_3,\dots$.

For any $\xi=\xi_1\xi_2\xi_3\ldots\in\Xi$ and integer $n\ge0$ let 
$Z(\xi,n)$ denote a string $\zeta=\zeta_1\zeta_2\zeta_3\dots$ that begins 
with $n$ zeros and coincides with the string $\xi$ afterwards: $\zeta_k=0$ 
if $k\le n$ and $\zeta_k=\xi_k$ if $k>n$.  Clearly, $Z(\xi,0)=\xi$.  For 
any $n\ge1$ we have $Z(\xi,n)=Z(\xi,n-1)$ if $\xi_n=0$ and 
$Z(\xi,n)=\gamma_n(Z(\xi,n-1))$ if $\xi_n=1$.  It follows that the strings 
$Z(\xi,0),Z(\xi,1),Z(\xi,2),\dots$ all belong to the orbit 
$\Orb_\Gamma(\xi)$.  If two strings $\xi,\xi'\in\Xi$ coincide up to 
finitely many characters, then $Z(\xi,n)=Z(\xi',n)$ for all sufficiently 
large $n$, which implies that $\xi$ and $\xi'$ are in the same orbit of the 
group $\Gamma$.  We conclude that any orbit of $\Gamma$ has an element in 
every cylinder $C_w$, $w\in\{0,1\}^*$.  As a consequence, any orbit of 
$\Gamma$ is dense in $\Xi$.  Therefore the group $\Gamma$ acts minimally on 
$\Xi$.

Now we are going to show that the actual construction of the set $Y$ and 
the group $H$ agrees with our abstract model (respectively $\Xi$ and 
$\Gamma$).  First we define a map $\alpha:\Xi\to Y$.  For any finite string 
$w=\xi_1\xi_2\ldots\xi_k$ of 0s and 1s consisting of $k\ge1$ characters, 
let $f^{(w)}=f^{(\xi_1)}_1f^{(\xi_2)}_2\ldots f^{(\xi_k)}_k$ and 
$V^{(w)}=f^{(w)}(U_k)$.  Also, we let $f^{(\varnothing)}=\id_X$ and 
$V^{(\varnothing)}=U_0$.  Given an infinite string 
$\xi=\xi_1\xi_2\xi_3\ldots\in\Xi$, consider a nested sequence of clopen 
sets $V^{\varnothing}\supset V^{(\xi_1)}\supset 
V^{(\xi_1\xi_2)}\supset\ldots$.  By assumption, the intersection of all 
these sets consists of a single point, and we define $\alpha(\xi)$ to be 
that point.  Let us show that $\alpha(\xi)\in Y$.  Since the point $x_0$ 
belongs to each of the sets $U_0,U_1,U_2,\dots$, it follows that 
$f^{(\xi_1\xi_2\ldots\xi_k)}(x_0)\in V^{(\xi_1\xi_2\ldots\xi_k)}$ for 
$k=1,2,\dots$.  This implies that every limit point of the sequence 
$x_0,f^{(\xi_1)}_1(x_0),f^{(\xi_1\xi_2)}_2(x_0),\dots$ belongs to the 
intersection of the nested clopen sets 
$V^{\varnothing},V^{(\xi_1)},V^{(\xi_1\xi_2)},\dots$.  As $\alpha(\xi)$ is 
the only point in the intersection, we conclude that the sequence converges 
to $\alpha(\xi)$.  Since all points in the sequence belong to the orbit 
$\Orb_H(x_0)$, we obtain that $\alpha(\xi)\in\overline{\Orb_H(x_0)}=Y$.

By construction, $\alpha(C_w)\subset V^{(w)}$ for all $w\in\{0,1\}^*$.  For 
any strings $w,u\in\{0,1\}^*$ let $wu$ denote their concatenation.  Notice 
that $C_{wu}\subset C_w$ and $V^{(wu)}\subset V^{(w)}$.  Let $k$ be the 
number of characters in the string $w$.  Then $V^{(w)}=f^{(w)}(U_k)$, 
$V^{(w0)}=f^{(w)}(U_{k+1})$ and  
$V^{(w1)}=f^{(w)}\bigl(f^{(1)}_{k+1}(U_{n+1})\bigr)$.  By construction, 
$U_{k+1}$ and $f^{(1)}_{k+1}(U_{k+1})$ are disjoint subsets of $U_k$ and 
their union is different from $U_k$.  It follows that $V^{(w)}$ is the 
disjoint union of three nonempty clopen sets $V^{(w0)}$, $V^{(w1)}$ and 
$V^{(w)}\setminus \bigl(V^{(w0)}\cup V^{(w1)}\bigr)$.  Note that any two 
strings in $\{0,1\}^*$ neither of which is a beginning of the other can be 
written as $w0u$ and $w1u'$, where $w,u,u'\in\{0,1\}^*$.  Since 
$V^{(w0u)}\subset V^{(w0)}$ and $V^{(w1u')}\subset V^{(w1)}$, we obtain 
that the set $V^{(w0u)}$ is disjoint from $V^{(w1u')}$.  In particular, the 
sets $V^{(w)}$ and $V^{(w')}$ are disjoint if $w$ and $w'$ are two 
different strings with the same number of characters.

Next we prove that the map $\alpha$ is one-to-one.  Given two infinite 
strings $\xi,\xi'\in\Xi$, let $w$ be their longest common beginning.  If 
$\xi\ne\xi'$ then $w$ is a finite string.  Moreover, one of the strings 
$\xi$ and $\xi'$ belongs to $C_{w0}$ while the other is in $C_{w1}$.  Hence 
one of the points $\alpha(\xi)$ and $\alpha(\xi')$ belongs to $V^{(w0)}$ 
while the other is in $V^{(w1)}$.  By the above the sets $V^{(w0)}$ and 
$V^{(w1)}$ are disjoint.  As a consequence, $\alpha(\xi)\ne\alpha(\xi')$.

Given $\xi=\xi_1\xi_2\xi_3\ldots\in\Xi$, let us show that every open 
neighborhood of the point $\alpha(\xi)$ contains a clopen set of the form 
$V^{(w)}$, where the finite string $w\in\{0,1\}^*$ is a beginning of $\xi$.
By construction, $V^{(\xi_1)}\supset V^{(\xi_1\xi_2)}\supset 
V^{(\xi_1\xi_2\xi_3)}\supset\ldots$ and the intersection of these clopen 
sets is $\{\alpha(\xi)\}$.  Hence $X\setminus V^{(\xi_1)}\subset X\setminus 
V^{(\xi_1\xi_2)}\subset X\setminus V^{(\xi_1\xi_2\xi_3)}\subset\ldots$ and 
the union of these complements is $X\setminus\{\alpha(\xi)\}$.  Adding an 
arbitrary open neighborhood $U$ of $\alpha(\xi)$ to the complements, we 
obtain an open cover of $X$.  Since $X$ is compact, there is a finite 
subcover.  It further follows that $X$ is covered by two sets $U$ and 
$X\setminus V^{(\xi_1\xi_2\ldots\xi_n)}$ provided that $n$ is large 
enough.  Equivalently, $V^{(\xi_1\xi_2\ldots\xi_n)}\subset U$ if $n$ is 
large enough.

Next we prove that the map $\alpha$ is continuous.  Let $\xi\in\Xi$ and $U$ 
be an open neighborhood of the point $\alpha(\xi)$.  We need to show that 
the preimage $\alpha^{-1}(U)$ contains an open neighborhood of $\xi$.  By 
the above $U$ contains a set $V^{(w)}$ for some string $w$ that $\xi$ 
begins with.  Then the cylinder $C_w$ is a clopen neighborhood of $\xi$.  
It is contained in $\alpha^{-1}(U)$ since $\alpha(C_w)\subset V^{(w)}$.

Next we prove that $f^{(1)}_n\alpha=\alpha\gamma_n$ on $\Xi$ for all 
$n\ge1$.  Take any $\xi=\xi_1\xi_2\xi_3\ldots\in\Xi$ and let 
$\gamma_n(\xi)=\eta_1\eta_2\eta_3\ldots$.  By the above, 
$f^{(\xi_1\xi_2\ldots\xi_k)}(x_0)\to\alpha(\xi)$ and 
$f^{(\eta_1\eta_2\ldots\eta_k)}(x_0)\to\alpha(\gamma_n(\xi))$ as 
$k\to\infty$.  Then $f^{(1)}_nf^{(\xi_1\xi_2\ldots\xi_k)}(x_0)\to 
f^{(1)}_n(\alpha(\xi))$ as $k\to\infty$.  We are going to show that 
$f^{(1)}_nf^{(\xi_1\xi_2\ldots\xi_k)}(x_0)= 
f^{(\eta_1\eta_2\ldots\eta_k)}(x_0)$ for $k\ge n$.  This will imply that 
$f^{(1)}_n(\alpha(\xi))=\alpha(\gamma_n(\xi))$.  First consider the case 
when $\xi_i=0$ for all $i<n$.  Then $\eta_n\ne\xi_n$ and 
$\eta_i=\xi_i$ for all $i\ne n$.  Since $f^{(1)}_n$ is an involution and 
$f^{(0)}_n=\id_X$, we obtain that $f^{(1)}_nf^{(\xi_n)}_n=f^{(\eta_n)}_n$.  
As $f^{(\xi_i)}_i=\id_X$ for $i<n$, it follows that 
$f^{(1)}_nf^{(\xi_1\xi_2\ldots\xi_k)}=f^{(\eta_1\eta_2\ldots\eta_k)}$ for 
all $k\ge n$.  Now consider the case when $\xi_i=1$ for some $i<n$.  In 
this case, $\gamma_n(\xi)=\xi$.  Since $x_0\in U_k$ for any $k\ge n$, it 
follows that the point $y=f^{(\xi_{i+1})}_{i+1}f^{(\xi_{i+2})}_{i+2}\ldots 
f^{(\xi_k)}_k(x_0)$ belongs to $U_i$.  Then the point $f^{(1)}_i(y)$ does 
not belong to $U_i$, and neither does $f^{(\xi_1\xi_2\ldots\xi_k)}(x_0)$.  
Since $\supp\bigl(f^{(1)}_n\bigr)\subset U_{n-1}\subset U_i$, we conclude 
that $f^{(1)}_nf^{(\xi_1\xi_2\ldots\xi_k)}(x_0)= 
f^{(\xi_1\xi_2\ldots\xi_k)}(x_0)= f^{(\eta_1\eta_2\ldots\eta_k)}(x_0)$.

Next we prove that the map $\alpha$ is onto.  For any $n\ge1$, the equality 
$f^{(1)}_n\alpha=\alpha\gamma_n$ implies that $f^{(1)}_n(\alpha(\Xi))= 
\alpha(\gamma_n(\Xi))=\alpha(\Xi)$.  Since the group $H$ is generated by 
the maps $f^{(1)}_1,f^{(1)}_2,f^{(1)}_3,\dots$, it follows that 
$h(\alpha(\Xi))=\alpha(\Xi)$ for all $h\in H$.  Notice that 
$x_0\in\alpha(\Xi)$, namely, $x_0$ is the image of the infinite string of 
all 0s.  Since the set $\alpha(\Xi)$ is invariant under the action of the 
group $H$, we obtain that $\Orb_H(x_0)\subset\alpha(\Xi)$.  Further, the 
set $\alpha(\Xi)$ is compact as $\Xi$ is compact and the map $\alpha$ is 
continuous.  As a consequence, $\alpha(\Xi)$ is a closed subset of $X$.  
Since $\Orb_H(x_0)\subset\alpha(\Xi)\subset Y$ and 
$Y=\overline{\Orb_H(x_0)}$, we conclude that $\alpha(\Xi)=Y$.

Since the map $\alpha$ is one-to-one and onto, it is invertible.  The 
inverse map $\alpha^{-1}:Y\to\Xi$ is continuous since $\alpha$ is 
continuous and $\Xi$ is compact.  Hence $\alpha$ is a homeomorphism.  As a 
consequence, $Y$ is a Cantor set.

Now we derive minimality of the action of the group $H$ on $Y$ from 
minimality of the action of the group $\Gamma$ on $\Xi$.  For any closed 
set $Y_0\subset Y$, the set $\alpha^{-1}(Y_0)$ is a closed subset of 
$\Xi$.  Assuming $Y_0$ is invariant under the action of $H$, we have 
$f^{(1)}_n(Y_0)=Y_0$ for all $n\ge1$.  By the above 
$f^{(1)}_n\alpha=\alpha\gamma_n$ on $\Xi$.  Therefore 
$\gamma_n=\alpha^{-1}f^{(1)}_n\alpha$ on $\Xi$.  Then 
$\gamma_n(\alpha^{-1}(Y_0))=\alpha^{-1}f^{(1)}_n(Y_0)=\alpha^{-1}(Y_0)$.  
Since the group $\Gamma$ is generated by maps 
$\gamma_1,\gamma_2,\gamma_3,\dots$, it follows that 
$\gamma(\alpha^{-1}(Y_0))=\alpha^{-1}(Y_0)$ for all $\gamma\in\Gamma$, that 
is, the set $\alpha^{-1}(Y_0)$ is invariant under the action of the group 
$\Gamma$ on $\Xi$.  Minimality of the latter action implies that 
$\alpha^{-1}(Y_0)$ is either $\Xi$ or the empty set.  Then $Y_0$ is either 
$Y$ or the empty set.

It remains to prove that the set $Y$ is nowhere dense in $X$.  Since $Y$ is 
closed, it is enough to show that it has no interior points.  Let $y\in Y$ 
and $U$ be an open neighborhood of the point $y$.  We have $y=\alpha(\xi)$ 
for some $\xi\in\Xi$.  By the above $U$ contains a set $V^{(w)}$ for some 
string $w\in\{0,1\}^*$ that $\xi$ begins with.  Also by the above, 
$V^{(w)}$ is the disjoint union of three nonempty clopen sets $V^{(w0)}$, 
$V^{(w1)}$ and $V^{(w)}\setminus \bigl(V^{(w0)}\cup V^{(w1)}\bigr)$.  Take 
any string $\xi'\in\Xi$ and let $w'$ be the beginning of $\xi'$ that has 
the same number of characters as $w$.  If $w'\ne w$ then the set 
$V^{(w')}$, which contains the point $\alpha(\xi')$, is disjoint from 
$V^{(w)}$ so that $\alpha(\xi')\notin V^{(w)}$.  If $w'=w$ then 
$\alpha(\xi')\in V^{(w0)}\cup V^{(w1)}$.  We conclude that the nonempty set 
$V^{(w)}\setminus \bigl(V^{(w0)}\cup V^{(w1)}\bigr)$ is disjoint from 
$\alpha(\Xi)=Y$.  Therefore the open set $U$ is not fully contained in $Y$.
\end{proof}

One can show that the group $H$ considered in Lemma 
\ref{example-nowhere-dense} is locally finite, that is, any finitely 
generated subgroup of $H$ is finite.

\begin{proposition}\label{plenty-of-examples}
Suppose $X$ is a Cantor set and $\cG\subset\Homeo(X)$ is an ample group 
that acts minimally on $X$.  Then there are uncountably many infinite, 
nowhere dense closed sets $Y\subset X$ such that the stabilizer 
$\St_{\cG}(Y)$ acts minimally when restricted to $Y$.
\end{proposition}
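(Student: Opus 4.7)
The plan is to invoke Lemma \ref{example-nowhere-dense} with data parameterized by binary sequences $\omega \in \{0,1\}^{\bN}$, producing a family $\{Y_\omega\}$ of uncountably many such sets. The distinguishing feature will be explicit ``markers'' in $X$ that belong to $Y_\omega$ for precisely one of the two possible values of each coordinate $\omega_n$. We would fix a point $x_0 \in X$ together with a compatible metric $d$ on $X$, and construct inductively in $n \ge 1$ (starting from $U_0 = X$) the objects $K_n, A_n^0, A_n^1, y_n^0, y_n^1, g_n^0, g_n^1, U_n$ as follows. Since $U_{n-1}$ is an infinite clopen neighborhood of $x_0$ in the Cantor set $X$, partition it as $U_{n-1} = K_n \sqcup A_n^0 \sqcup A_n^1$ with $x_0 \in K_n$ and $A_n^0, A_n^1$ nonempty clopen. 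By minimality of $\cG$ (Lemma \ref{minimality}), $\Orb_{\cG}(x_0)$ is dense in $X$, so we can select $y_n^i \in A_n^i \cap \Orb_{\cG}(x_0)$ together with $g_n^i \in \cG$ satisfying $g_n^i(x_0) = y_n^i$ for $i=0,1$. Finally, by uniform continuity of the finitely many compositions obtained by letting each $f_k^{(1)}$ be either $\de_{U_k; g_k^0}$ or $\de_{U_k; g_k^1}$ ($k \le n$), choose a clopen neighborhood $U_n \subsetneq K_n$ of $x_0$ small enough that $g_n^i(U_n) \subset A_n^i$ for both $i$ and $\mathrm{diam}(f_1^{(w_1)} \cdots f_n^{(w_n)}(U_n)) < 1/n$ for every $w \in \{0,1\}^n$ and every such assignment.

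For each $\omega \in \{0,1\}^{\bN}$, taking $g_n := g_n^{\omega_n}$ makes $(U_n, g_n)_{n \ge 1}$ satisfy the hypotheses of Lemma \ref{example-nowhere-dense}: $g_n(U_n) \cap U_n \subset A_n^{\omega_n} \cap K_n = \varnothing$ and $U_n \cup g_n(U_n) \subset K_n \cup A_n^{\omega_n} \subsetneq U_{n-1}$, while the diameter condition forces each nested intersection $\bigcap_n f_1^{(\xi_1)} \cdots f_n^{(\xi_n)}(U_n)$ to be a singleton. The lemma will then yield a Cantor set $Y_\omega = \overline{\Orb_{H_\omega}(x_0)}$, nowhere dense in $X$, together with a locally finite subgroup $H_\omega \subset \cG$ acting minimally on $Y_\omega$; since $H_\omega \subset \St_{\cG}(Y_\omega)$, any $\St_{\cG}(Y_\omega)$-invariant closed subset of $Y_\omega$ is $H_\omega$-invariant, hence $\St_{\cG}(Y_\omega)$ also acts minimally on $Y_\omega$.

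To establish distinctness, we plan to show that $y_n^i \in Y_\omega$ if and only if $\omega_n = i$. The ``if'' direction is immediate, as $y_n^{\omega_n} = g_n^{\omega_n}(x_0) \in \Orb_{H_\omega}(x_0)$. For ``only if'', a short induction will show $\Orb_{H_\omega}(x_0) \subset \{x_0\} \cup \bigcup_m A_m^{\omega_m}$: a typical orbit element takes the form $f_{n_1}^{(1)} \cdots f_{n_k}^{(1)}(x_0)$ with $n_1 < \cdots < n_k$, and evaluating right-to-left, applying $f_{n_j}^{(1)}$ to a point already lying in $A_{n_{j+1}}^{\omega_{n_{j+1}}} \subset U_{n_{j+1}-1} \subset U_{n_j}$ sends it via $g_{n_j}$ into $g_{n_j}(U_{n_j}) \subset A_{n_j}^{\omega_{n_j}}$, so after all $k$ applications the point lies in $A_{n_1}^{\omega_{n_1}}$. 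Since each $A_n^{1-\omega_n}$ is closed, passing to closures preserves disjointness and yields $Y_\omega \cap A_n^{1-\omega_n} = \varnothing$, so $y_n^{1-\omega_n} \notin Y_\omega$. Hence $\omega \ne \omega'$ forces $Y_\omega \ne Y_{\omega'}$, giving the desired continuum of distinct sets.

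The main obstacle will be engineering the partition so that the support containment $\supp(\de_{U_m; g_m^{\omega_m}}) \subset K_m \cup A_m^{\omega_m}$ simultaneously keeps $A_n^{1-\omega_n}$ outside the support of every $f_m^{(1)}$ with $m \ge n$ (using $\supp(f_m^{(1)}) \subset U_n$ for $m > n$) and allows $f_m^{(1)}$ with $m < n$ only to move $A_n^{1-\omega_n}$ cleanly into $A_m^{\omega_m}$, never into a ``forbidden'' region where the inductive control would break. The three-piece partition with $x_0$ in the kernel $K_n$ is exactly what accomplishes this, and arranging the three pieces coherently across all scales $n$ is the delicate part of the construction.
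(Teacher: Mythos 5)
Your proposal is correct and follows essentially the same route as the paper: both build a single nested sequence of clopen neighborhoods of a base point with two admissible group elements at each level whose images land in disjoint clopen pieces of the previous level, impose the same diameter control, and invoke Lemma \ref{example-nowhere-dense} for each binary sequence. The only real difference is in the distinctness certificate: the paper uses injectivity of the coding map $\beta$ on $\{0,1,2\}^{\bN}$ to get $Y_\omega\cap\beta(\Omega)=\{\beta(\omega)\}$, whereas you use the marker points $y_n^i$ together with the containment $\Orb_{H_\omega}(x_0)\subset\{x_0\}\cup\bigcup_m A_m^{\omega_m}$ --- both work, noting only that your passage to closures is justified because the excluded sets $A_n^{1-\omega_n}$ are open (clopen), not merely closed.
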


\begin{proof}
Let $\rho:X\times X\to\bR$ be any distance function on the Cantor set $X$ 
compatible with its topology.  For any $x\in X$ and $r>0$ denote by 
$B(x,r)$ the ball of radius $r$ centered at the point $x$: $B(x,r)=\{y\in 
X\mid \rho(y,x)<r\}$.  The ball $B(x,r)$ is an open neighborhood of $x$ of 
diameter at most $2r$.

We are going to construct a nested sequence $U_0\supset U_1\supset 
U_2\supset\dots$ of nonempty clopen subsets of $X$ and two sequences, 
$g_1,g_2,g_3,\dots$ and $h_1,h_2,h_3,\dots$, of elements of the group 
$\cG$.  The elements of $\cG$ will be chosen so that for any $n\ge1$ the 
sets $U_n$, $g_n(U_n)$ and $h_n(U_n)$ are disjoint subsets of $U_{n-1}$.  
Assuming this, generalized $2$-cycles $f^{(1)}_n=\de_{U_n;g_n}$ and 
$f^{(2)}_n=\de_{U_n;h_n}$ are defined.  Since $g_n,h_n\in\cG$, the maps 
$f^{(1)}_n$ and $f^{(2)}_n$ belong to the group $\F(\cG)=\cG$.  We also let 
$f^{(0)}_n=\id_X$.  The clopen sets $U_0,U_1,U_2,\dots$ will be chosen so 
that for any string $\xi_1\xi_2\ldots\xi_n$ of 0s, 1s and 2s, the set 
$f^{(\xi_1)}_1f^{(\xi_2)}_2\ldots f^{(\xi_n)}_n(U_n)$ is of diameter at 
most $2^{-n}$.

The construction is done inductively.  First we let $U_0=X$.  Now assume 
that for some $n\ge1$ the set $U_{n-1}$ is already chosen and so are the 
maps $g_k$ and $h_k$, $1\le k\le n-1$.  Since $X$ is a Cantor set, the 
nonempty clopen set $U_{n-1}$ is infinite.  Therefore we can find three 
disjoint nonempty clopen sets $V,V',V''\subset U_{n-1}$.  Take any point 
$y\in V$.  Since the group $\cG$ acts minimally on $X$, the orbit 
$\Orb_{\cG}(y)$ is dense in $X$.  In particular, this orbit has a point in 
$V'$ and in $V''$.  Hence we can choose $g_n,h_n\in\cG$ such that 
$g_n(y)\in V'$ and $h_n(y)\in V''$.  Then $W=V\cap g_n^{-1}(V')\cap 
h_n^{-1}(V'')$ is a clopen neighborhood of the point $y$.  By construction, 
$W\subset V$, $g_n(W)\subset V'$ and $h_n(W)\subset V''$, which implies 
that $W$, $g_n(W)$ and $h_n(W)$ are disjoint subsets of $U_{n-1}$.  Now 
that the maps $g_k$ and $h_k$ are chosen for any $k$, $1\le k\le n$, we can 
define the maps $f^{(i)}_k$, $i\in\{0,1,2\}$, $1\le k\le n$ as described 
above.  For any string $w=\xi_1\xi_2\ldots\xi_n$ of 0s, 1s and 2s that has 
exactly $n$ characters, we let $f^{(w)}=f^{(\xi_1)}_1f^{(\xi_2)}_2\ldots 
f^{(\xi_n)}_n$ and $y_w=f^{(w)}(y)$.  Further, let $\widetilde{W}$ be the 
intersection of $W$ and all sets of the form 
$\bigl(f^{(w)}\bigr)^{-1}\bigl(B(y_w,2^{-n-1})\bigr)$.  Then 
$\widetilde{W}$ is an open neighborhood of the point $y$.  As $X$ is a 
Cantor set, the set $\widetilde{W}$ contains a clopen neighborhood of $y$.  
We choose the latter as $U_n$.  The clopen set $U_n$ is not empty since 
$y\in U_n$.  Since $U_n\subset\widetilde{W}\subset W$, the sets $U_n$, 
$g_n(U_n)$ and $h_n(U_n)$ are disjoint subsets of $U_{n-1}$.  By 
construction, $f^{(w)}(U_n)\subset B(y_w,2^{-n-1})$ for any string 
$w=\xi_1\xi_2\ldots\xi_n$ of 0s, 1s and 2s, which implies that the diameter 
of the set $f^{(w)}(U_n)$ is at most $2^{-n}$.  The inductive step of the 
construction is complete.

Let $\Xi=\{0,1,2\}^{\bN}$.  We regard any element $\xi\in\Xi$ as an 
infinite string $\xi=\xi_1\xi_2\xi_3\ldots$, where each 
$\xi_i\in\{0,1,2\}$. Note that for any $n\ge1$ the sets 
$f^{(0)}_n(U_n)=U_n$, $f^{(1)}_n(U_n)=g_n(U_n)$ and 
$f^{(2)}_n(U_n)=h_n(U_n)$ are subsets of $U_{n-1}$.  Therefore for any 
infinite string $\xi=\xi_1\xi_2\xi_3\ldots$ of 0s, 1s and 2s we have a 
nested sequence of clopen sets $U_0\supset f^{(\xi_1)}_1(U_1)\supset 
f^{(\xi_1)}_1f^{(\xi_2)}_2(U_2)\supset\dots$.  By construction, the 
diameters of those sets tend to $0$.  Hence their intersection consists of
a single point, which we denote $\beta(\xi)$.  Now we have a map 
$\beta:\Xi\to X$.  Let $x_0=\beta(000\dots)$.  The point $x_0$ is in the 
intersection of sets $U_0,U_1,U_2,\dots$.

Consider the set $\Omega=\{1,2\}^{\bN}$, which is a subset of $\Xi$.  For 
any infinite string $\om=\om_1\om_2\om_3\ldots\in\Omega$, let $H_\om$ 
denote a subgroup of $\cG$ generated by elements $f^{(\om_n)}_n$, $n\ge1$.  
Further, let $Y_\om=\overline{\Orb_{H_\om}(x_0)}$.  Then $Y_\om$ is a
closed subset of $X$ invariant under the action of the group $H_\om$.  
Since for any $n\ge1$ the sets $U_n$, $g_n(U_n)$ and $h_n(U_n)$ are 
disjoint subsets of $U_{n-1}$, it follows that the union of $U_n$ and 
$f^{(\om_n)}_n(U_n)$ is a proper subset of $U_{n-1}$.  Now Lemma 
\ref{example-nowhere-dense} implies that $Y_\om$ is a Cantor set which is 
nowhere dense in $X$, and the group $H_\om$ acts minimally when restricted 
to $Y_\om$.  Since $H_\om\subset\St_{\cG}(Y_\om)$, the stabilizer 
$\St_{\cG}(Y_\om)$ also acts minimally when restricted to $Y_\om$.

As the set $\Omega$ is uncountable, it remains to demonstrate that 
$Y_\om\ne Y_{\om'}$ whenever $\om$ and $\om'$ are different elements of 
$\Om$.  Just as in the proof of Lemma \ref{example-nowhere-dense} we showed 
that the map $\alpha$ is one-to-one, we can show that the map $\beta$ is 
one-to-one.  Just as in the proof of Lemma \ref{example-nowhere-dense} we 
showed that the map $\alpha$ is onto, we can show for any 
$\om=\om_1\om_2\om_3\ldots\in\Omega$ that the set $Y_\om$ consists of all 
points of the form $\beta(\xi_1\xi_2\xi_3\ldots)$, where 
$\xi_n\in\{0,\om_n\}$ for each $n\ge1$.  Since the map $\beta$ is 
one-to-one, it follows that $Y_\om\cap\beta(\Omega)=\{\beta(\om)\}$.  As a 
consequence, $Y_\om\ne Y_{\om'}$ whenever $\om\ne\om'$.
\end{proof}

\section{Property \ref{prop-E}}\label{property-E}

The majority of our results on maximal subgroups of ample groups obtained
in Section \ref{maxsub} require that the ample group has Property
\ref{prop-E} (to be precise, these are Theorems \ref{maxsub-not-clopen}, 
\ref{maxsub-clopen}, \ref{maxsub-3-or-more} and \ref{maxsub-2-or-more}).   
Hence they have no value until we provide examples of ample groups with 
this property.  We should note that Property \ref{prop-E} is complex and 
its verification is not going to be easy.  We begin with establishing a 
weakened version of Property \ref{prop-E} that involves the generalized 
symmetric groups.

\begin{proposition}\label{E-weakened}
Suppose a group $G\subset\Homeo(X)$ acts minimally on $X$.  Then for any
clopen sets $U_1$ and $U_2$ that intersect, the generalized symmetric group
over the local subgroup $\F_{U_1\cup U_2}(G)$ is generated by the union of
generalized symmetric groups over $\F_{U_1}(G)$ and $\F_{U_2}(G)$:
$\sS(\F_{U_1\cup U_2}(G))=\bigl\langle\sS(\F_{U_1}(G))\cup
\sS(\F_{U_2}(G))\bigr\rangle$.
\end{proposition}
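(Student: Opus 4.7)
The plan is as follows.  The inclusion $\bigl\langle\sS(\F_{U_1}(G))\cup\sS(\F_{U_2}(G))\bigr\rangle\subset\sS(\F_{U_1\cup U_2}(G))$ is immediate from $\F_{U_i}(G)\subset\F_{U_1\cup U_2}(G)$.  For the reverse inclusion I invoke Lemma \ref{generated-by-2-cycles-refined} applied to the ample group $\F_{U_1\cup U_2}(G)$: it suffices to show that every generalized $2$-cycle $\de_{W;f}$ with $f\in\F_{U_1\cup U_2}(G)$ belongs to $\bigl\langle\sS(\F_{U_1}(G))\cup\sS(\F_{U_2}(G))\bigr\rangle$.

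First I refine the geometry.  Set $V_0=U_1\cap U_2$, $V_1=U_1\setminus U_2$, $V_2=U_2\setminus U_1$; these are clopen sets partitioning $U_1\cup U_2$, with $V_0$ nonempty by hypothesis.  Since $f$ fixes every point outside $U_1\cup U_2$, both $W$ and $f(W)$ lie in $U_1\cup U_2$.  The nine clopen sets $W_{ij}=W\cap V_i\cap f^{-1}(V_j)$, $i,j\in\{0,1,2\}$, partition $W$, so Lemma \ref{disjoint} factors $\de_{W;f}$ as a product of the commuting pieces $\de_{W_{ij};f}$.  For every pair $(i,j)\ne(1,2),(2,1)$ the support $V_i\cup V_j$ of $\de_{W_{ij};f}$ is contained in a single $U_k$, and then $\phi=\de_{W_{ij};f}$ lies in $\F_{U_k}(G)$ (since $\phi\in\F(\F(G))=\F(G)$ by Lemma \ref{amp-amp} and $\supp\phi\subset U_k$); the identity $\phi=\de_{W_{ij};\phi}$ exhibits $\phi$ as one of the generators of $\sS(\F_{U_k}(G))$ supplied by Lemma \ref{generated-by-2-cycles-refined}.

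The main obstacle is the ``crossing'' case, where $W_{12}\subset V_1$ maps into $V_2$ (the symmetric case $W_{21}$ is identical).  This is where minimality enters.  For each $x\in W_{12}$ the orbit $\Orb_G(x)$ is dense in $X$ by Lemma \ref{minimality}, and so meets the nonempty open set $V_0$; hence there is $g_x\in G$ with $g_x(x)\in V_0$.  Continuity of $g_x$ together with the fact that $V_0$ is clopen yields a clopen neighborhood $N_x$ of $x$ with $g_x(N_x)\subset V_0$.  A compactness-plus-disjointification argument then produces a clopen partition $W_{12}=W^{(1)}\sqcup\dots\sqcup W^{(m)}$ and elements $g_k\in G$ with $g_k(W^{(k)})\subset V_0$ for each $k$.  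Another application of Lemma \ref{disjoint} reduces the problem to showing each $\de_{W^{(k)};f}$ lies in the target subgroup.

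For fixed $k$ the three clopen sets $W^{(k)}\subset V_1$, $g_k(W^{(k)})\subset V_0$, $f(W^{(k)})\subset V_2$ are pairwise disjoint.  A routine check (or the $\sS_3$-computation rewriting the transposition $(1\,3)$ as $(1\,2)(2\,3)(1\,2)$ that appears in the proof of Lemma \ref{generated-by-2-cycles}) gives the bridging identity
\[
\de_{W^{(k)};f}=\de_{W^{(k)};g_k}\cdot\de_{g_k(W^{(k)});fg_k^{-1}}\cdot\de_{W^{(k)};g_k}.
\]
The outer factors have support $W^{(k)}\cup g_k(W^{(k)})\subset V_0\cup V_1\subset U_1$ and the middle factor has support $g_k(W^{(k)})\cup f(W^{(k)})\subset V_0\cup V_2\subset U_2$, so by the argument already used in the easy case each factor lies in $\sS(\F_{U_1}(G))$ or $\sS(\F_{U_2}(G))$.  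The fiddly bookkeeping, which I view as the principal technical point, is verifying that all three $2$-cycles on the right of the bridging identity are genuinely defined, i.e.\ that each of $W^{(k)}$, $g_k(W^{(k)})$, $f(W^{(k)})$ is disjoint from its image under the relevant map; this follows directly from the pairwise disjointness of $V_0,V_1,V_2$.
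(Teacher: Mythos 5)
Your argument is correct and follows essentially the same route as the paper's: reduce to generalized $2$-cycles $\de_{W;f}$ with $f\in\F_{U_1\cup U_2}(G)$, split $W$ into the nine pieces determined by $U_1\setminus U_2$, $U_2\setminus U_1$, $U_1\cap U_2$ and their $f$-preimages, dispose of seven pieces by support containment, and handle the two crossing pieces by using minimality to bridge through $U_1\cap U_2$ via the conjugation identity $(1\,3)=(1\,2)(2\,3)(1\,2)$ together with a compactness-and-disjointification subdivision. The only (immaterial) difference is organizational: the paper isolates the crossing case first and does the nine-fold splitting afterwards, while you split first.
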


\begin{proof}
For any clopen set $W\subset X$ let $D_W$ denote the set of all generalized 
$2$-cycles in the local subgroup $\F_W(G)$.  Since the group $\F_W(G)$ is 
ample, it follows from Lemma \ref{generated-by-2-cycles} that the 
generalized symmetric group $\sS(\F_W(G))$ is generated by $D_W$.  As a 
consequence, for any clopen sets $U_1,U_2\subset X$ the group 
$\sS(\F_{U_1\cup U_2}(G))$ is generated by $D_{U_1\cup U_2}$ while the 
group $\bigl\langle\sS(\F_{U_1}(G))\cup\sS(\F_{U_2}(G))\bigr\rangle$ is 
generated by $D_{U_1}\cup D_{U_2}$.  Note that the sets $D_{U_1}$ and 
$D_{U_2}$ are contained in $D_{U_1\cup U_2}$ since the local subgroups 
$\F_{U_1}(G)$ and $\F_{U_2}(G)$ are contained in $\F_{U_1\cup U_2}(G)$.  
To prove the proposition, it is enough to show that, assuming the 
intersection $U_1\cap U_2$ is nonempty, every element of $D_{U_1\cup U_2}$ 
belongs to the group $\langle D_{U_1}\cup D_{U_2}\rangle$.  This is trivial 
when one of the sets $U_1$ and $U_2$ contains the other.  Hence we may 
further assume that each of the sets $P_1=U_1\setminus U_2$, 
$P_2=U_2\setminus U_1$ and $P_3=U_1\cap U_2$ is nonempty.

Given $f\in D_{U_1\cup U_2}$, we have $f=\de_{U;f'}$ for some clopen set 
$U\subset X$ and homeomorphism $f':X\to X$.  Observe that the generalized 
$2$-cycle $\de_{U;f}$ is defined and $f=\de_{U;f}$.  First we consider a 
special case when one of the sets $U$ and $f(U)$ is contained in $P_1$ and 
the other is contained in $P_2$.  Note that $f^{-1}=f$.  Therefore the 
generalized $2$-cycle $\de_{f(U);f}$ is defined and 
$\de_{f(U);f}=\de_{U;f}=f$.  Hence we may assume without loss of generality 
that $U\subset P_1$ and $f(U)\subset P_2$.  Since the group $G$ acts 
minimally on $X$, Lemma \ref{map-piecewise} implies that for some $k\ge1$ 
there exist disjoint clopen sets $V_1,V_2,\dots,V_k$ and maps 
$g_1,g_2,\dots,g_k\in G$ such that $U=V_1\sqcup V_2\sqcup\dots\sqcup V_k$ 
and $g_i(V_i)\subset P_3$ for all $i$.  As the set $f(U)$ is disjoint from 
$U$, it follows that the sets 
$V_1,V_2,\dots,V_k,f(V_1),f(V_2),\dots,f(V_k)$ are disjoint from one 
another.  In particular, the generalized $2$-cycle $\de_{V_i;f}$ is defined 
for each $i$, $1\le i\le k$.  Since the map $f$ belongs to the ample group 
$\F(G)$, so does $\de_{V_i;f}$.  Since $\supp(\de_{V_i;f})=V_i\cup f(V_i)$, 
the maps $\de_{V_1;f},\de_{V_2;f},\dots,\de_{V_k;f}$ have pairwise disjoint 
supports.  Therefore these maps commute with one another.  Each 
$\de_{V_i;f}$ coincides with $\de_{U;f}$ on $V_i\cup f(V_i)$ and with the 
identity map everywhere else.  Since sets $V_i\cup f(V_i)$, $1\le i\le k$ 
form a partition of the set $U\cup f(U)$, it follows that 
$f=\de_{V_1;f}\,\de_{V_2;f}\dots \de_{V_k;f}$.  We are going to show that 
each $\de_{V_i;f}$ belongs to the group $\langle D_{U_1}\cup 
D_{U_2}\rangle$.  Then $f\in\langle D_{U_1}\cup D_{U_2}\rangle$ as well.

For any $i$, $1\le i\le k$, the sets $V_i$, $f(V_i)$ and $g_i(V_i)$ are 
disjoint since $V_1\subset P_1$, $f(V_i)\subset P_2$ and $g_i(V_i)\subset 
P_3$.  Therefore the generalized permutation 
$h_\pi=\mu[V_i;\id_X,f,g_i;\pi]$ is defined for any permutation 
$\pi\in\sS_3$.  Moreover, $h_\pi\in\F(G)$.  Since 
$(1\,2)=(1\,3)(2\,3)(1\,3)$, it follows from Lemma \ref{gen-perm-homo} that 
$h_{(1\,2)}=h_{(1\,3)}h_{(2\,3)}h_{(1\,3)}$.  Observe that 
$h_{(1\,2)}=\de_{V_i;f}$, $h_{(1\,3)}=\de_{V_i;g_i}$ and 
$h_{(2\,3)}=\de_{f(V_i);g_if}$.  Then $\supp(h_{(1\,3)})=V_i\cup 
g_i(V_i)\subset U_1$ and $\supp(h_{(2\,3)})=f(V_i)\cup g_i(V_i)\subset 
U_2$, which means that $h_{(1\,3)}\in\F_{U_1}(G)$ and 
$h_{(2\,3)}\in\F_{U_2}(G)$.  Hence $h_{(1\,3)}\in D_{U_1}$ and 
$h_{(2\,3)}\in D_{U_2}$.  Consequently, the map 
$\de_{V_i;f}=h_{(1\,2)}h_{(2\,3)}h_{(1\,2)}$ belongs to the group $\langle 
D_{U_1}\cup D_{U_2}\rangle$.

Now consider an arbitrary generalized $2$-cycle $f=\de_{U;f}$ in 
$D_{U_1\cup U_2}$.  Note that the support $\supp(f)=U\cup f(U)$ is 
contained in $U_1\cup U_2$.  For any $i,j\in\{1,2,3\}$ let $Q_{ij}=P_i\cap 
f(P_j)$.  Since $U_1\cup U_2=P_1\sqcup P_2\sqcup P_3$, $f(U_1\cup 
U_2)=U_1\cup U_2$ and $f^{-1}=f$, it follows that $U_1\cup U_2$ is the 
disjoint union of $9$ clopen sets $Q_{ij}$, $i,j\in\{1,2,3\}$.  
Consequently, $U$ is the disjoint union of $9$ clopen sets $U\cap Q_{ij}$, 
$i,j\in\{1,2,3\}$.  Just as above, we obtain that generalized $2$-cycles 
$f_{ij}=\de_{U\cap Q_{ij};f}$, $i,j\in\{1,2,3\}$ are defined, they commute 
with one another, and $f$ is their product.  All $9$ maps belong to 
$D_{U_1\cup U_2}$.  By construction, $U\cap Q_{ij}\subset P_i$ and 
$f_{ij}(U\cap Q_{ij})=f(U\cap Q_{ij})\subset P_j$.  Hence the maps $f_{12}$ 
and $f_{21}$ have been covered by the special case.  By the above both maps 
belong to $\langle D_{U_1}\cup D_{U_2}\rangle$.  As for the other $7$ maps, 
the supports of $f_{11}$, $f_{13}$, $f_{31}$ and $f_{33}$ are contained in 
$U_1$ while the supports of $f_{22}$, $f_{23}$ and $f_{32}$ are contained 
in $U_2$.  Hence $f_{11},f_{13},f_{31},f_{33}\in D_{U_1}$ and 
$f_{22},f_{23},f_{32}\in D_{U_2}$.  We conclude that all $9$ maps $f_{ij}$, 
$i,j\in\{1,2,3\}$ belong to the group $\langle D_{U_1}\cup 
D_{U_2}\rangle$.  Then $f\in \langle D_{U_1}\cup D_{U_2}\rangle$ as well.
\end{proof}

\begin{remark}\label{finite-set}
If $X$ is a finite set with the discrete topology, then all subsets of $X$ 
are clopen.  Hence the group $\Homeo(X)$ coincides with the symmetric group 
$\sS_X$ of all permutations on $X$.  Any permutation on $X$ is also a 
generalized permutation (in the sense of Definition \ref{def-gen-perm}).  
Therefore for any group $G\subset\sS_X$ the ample group $\F(G)$ coincides 
with the generalized symmetric group $\sS(G)$.  Moreover, for any subset 
$U\subset X$ the local subgroup $\F_U(G)$ coincides with $\sS(\F_U(G))$.  
In this case the conclusion of Proposition \ref{E-weakened} means that the 
group $G$ has Property \ref{prop-E}.  The hypothesis of Proposition 
\ref{E-weakened} is that $G$ acts minimally on $X$.  For a finite $X$, this 
means transitivity, that is, the entire set $X$ forms a single orbit.  Then 
every permutation on $X$ is pointwise (and hence piecewise) an element of 
$G$, which implies that $\F(G)=\sS_X$.  Hence Theorems \ref{maxsub-clopen}, 
\ref{maxsub-3-or-more} and \ref{maxsub-2-or-more} apply to the symmetric 
group $\sS_X$ (Theorem \ref{maxsub-not-clopen} is vacuous if $X$ is 
finite).  One of the hypotheses of Theorem \ref{maxsub-2-or-more} is that 
the subgroup $H$ of the ample group $\cG$ contains a local subgroup $\cG_U$ 
for some clopen set $U$ containing more than one point.  In the case 
$\cG=\sS_X$, this is equivalent to the condition that $H$ contains a 
transposition.  Thus, as a byproduct of our proceedings, we obtain the 
following classical result.
\end{remark}

\begin{proposition}[Jordan {\cite{Jordan1870}}]
\label{prim-subgroup}
Any proper primitive subgroup of a finite symmetric group $\sS_n$ contains 
no transposition.
\end{proposition}

Proposition \ref{prim-subgroup} is a direct corollary of the statement 
proved in Note C of the book \cite{Jordan1870} (for an explicit 
formulation, see Theorem 3.3A in the book \cite{DM}).

\begin{proof}[Proof of Proposition \ref{prim-subgroup}]
Suppose $H$ is a maximal subgroup of $\sS_n$ that contains a transposition 
$(x\,y)$.  Then $H$ contains the local subgroup 
$\F_{\{x,y\}}(\sS_n)=\{\id_X,(x\,y)\}$.  The symmetric group $\sS_n$ has 
Property \ref{prop-E} (as explained in Remark \ref{finite-set}).  In the 
case when the subgroup $H$ acts transitively on the set $X$, it follows
from Theorem \ref{maxsub-2-or-more} that $H$ is the stabilizer of a 
partition of $X$ into at least two sets, each consisting of at least two 
points.  Hence $H$ is not primitive.  In the case when the action of $H$ on 
$X$ is not transitive, it is not primitive either.

Now assume $H_0$ is a proper primitive subgroup of $\sS_n$.  The subgroup 
$H_0$ can be extended to a maximal subgroup $H$.  Since $H_0$ is primitive, 
so is $H$.  Then it follows from the above that the group $H$ contains no 
transposition.  The same is true for its subgroup $H_0$.
\end{proof}

\begin{remark}\label{ThompsonV}
Another case when $\F_U(G)=\sS(\F_U(G))$ for any clopen set $U\subset X$ so 
that Proposition \ref{E-weakened} yields Property \ref{prop-E} is when 
every nontrivial local subgroup $\F_U(G)$ is simple.  Indeed, 
$\sS(\F_U(G))$ is always a normal subgroup of $\F_U(G)$ (due to Lemma 
\ref{gen-perm-normal-subgroup}).  One example of such a group is Thompson's 
group $V$.  The group $V$, which is simple, can be represented as an ample 
group acting minimally on the Cantor set $\{0,1\}^{\bN}$ (see, e.g., 
Example 2.3.2.1 in the book \cite{Nek2022} or Definition 1.1 in 
\cite{BBQS22}).  It is not hard to show that for every nonempty clopen set 
$U\subset\{0,1\}^{\bN}$ there exists a homeomorphism $\phi:\{0,1\}^{\bN}\to 
U$ that is piecewise an element of the group $V$.  This homeomorphism 
conjugates the ample group $V$ with its local subgroup $V_U$.  Hence the 
subgroup $V_U$ is isomorphic to the entire group $V$.  In particular, $V_U$ 
is simple.
\end{remark}

In general, Proposition \ref{E-weakened} does not yield Property 
\ref{prop-E}, but it helps in deriving this property from simpler 
properties introduced in Section \ref{prop}.

\begin{lemma}\label{CI-implies-E}
For any ample group acting minimally, Property \ref{prop-CI} implies
Property \ref{prop-E}.
\end{lemma}

\begin{proof}
Suppose $\cG\subset\Homeo(X)$ is an ample group that acts minimally on $X$ 
and has Property \ref{prop-CI}.  For any clopen sets $U_1,U_2\subset X$ the 
local subgroups $\cG_{U_1}$ and $\cG_{U_2}$ are contained in the local 
subgroup $\cG_{U_1\cup U_2}$.  Assuming the intersection $U_1\cap U_2$ is 
nonempty, we need to show that, conversely, every map $f\in\cG_{U_1\cup 
U_2}$ belongs to the group $\langle\cG_{U_1}\cup\cG_{U_2}\rangle$.  This is 
trivial when one of the sets $U_1$ and $U_2$ contains the other.  Hence we 
may further assume that each of the sets $U_1\setminus U_2$, $U_2\setminus 
U_1$ and $U_1\cap U_2$ is nonempty.

Since the group $\cG$ acts minimally and $U_1\cap U_2\ne\emptyset$, 
Proposition \ref{E-weakened} implies that the generalized symmetric group 
$\sS(\cG_{U_1\cup U_2})$ is generated by its subgroups $\sS(\cG_{U_1})$ and 
$\sS(\cG_{U_2})$.  Since $\sS(\cG_{U_1})\subset\cG_{U_1}$ and 
$\sS(\cG_{U_2})\subset\cG_{U_2}$, we conclude that 
$\sS(\cG_{U_1\cup U_2})\subset\langle\cG_{U_1}\cup\cG_{U_2}\rangle$.

Given a map $f\in\cG_{U_1\cup U_2}$, let $V_+=\{x\in U_1\mid f(x)\notin 
U_1\}$ and $V_-=\{x\in U_1\mid f^{-1}(x)\notin U_1\}$.  The sets $V_+$ and 
$V_-$ are clopen as $V_+=U_1\cap f^{-1}(X\setminus U_1)$ and $V_-=U_1\cap 
f(X\setminus U_1)$.  First we consider an easy case when 
$V_+=V_-=\emptyset$.  An equivalent condition is that $f(U_1)=U_1$.  Since 
$\supp(f)\subset U_1\cup U_2$, we also have $f(U_2\setminus 
U_1)=U_2\setminus U_1$.  Let $f_1$ be a map on $X$ that coincides with $f$ 
on $U_1$ and with the identity map everywhere else.  Let $f_2$ be another 
map on $X$ that coincides with $f$ on $U_2\setminus U_1$ and with the 
identity map everywhere else.  Since $f(U_1)=U_1$ and $f(U_2\setminus 
U_1)=U_2\setminus U_1$, it follows that $f_1$ and $f_2$ are 
homeomorphisms.  By construction, both $f_1$ and $f_2$ are piecewise 
elements of the group $\cG$.  Since $\cG$ is ample, it contains them.  Also 
by construction, $\supp(f_1)\subset U_1$, $\supp(f_2)\subset U_2\setminus 
U_1$ and $f=f_1f_2$.  Therefore $f_1\in\cG_{U_1}$, $f_2\in\cG_{U_2}$ and 
$f\in\langle\cG_{U_1}\cup\cG_{U_2}\rangle$.

Next consider the case when one of the sets $V_+$ and $V_-$ coincides with 
$U_1$.  Then the other set coincides with $U_1$ as well.  In this case, the 
image $f(U_1)$ is disjoint from $U_1$.  Hence the generalized $2$-cycle 
$g=\de_{U_1;f}$ is defined.  Moreover, $g\in\F(\cG)=\cG$.  Note that 
$\supp(g)\subset\supp(f)\subset U_1\cup U_2$.  Therefore $g\in\cG_{U_1\cup 
U_2}$.  Then the map $h=gf$ is also in $\cG_{U_1\cup U_2}$.  Since $g$ 
coincides with $f^{-1}$ on the set $f(U_1)$, we obtain that $h$ coincides 
with the identity map on $U_1$.  It follows that $h\in\cG_{U_2\setminus 
U_1}\subset\cG_{U_2}$.  Besides, $g\in\sS(\cG_{U_1\cup 
U_2})\subset\langle\cG_{U_1}\cup\cG_{U_2}\rangle$.  As a consequence, the 
map $f=g^{-1}h$ belongs to $\langle\cG_{U_1}\cup\cG_{U_2}\rangle$.

Now consider the case when $V_+$ and $V_-$ are both different from the 
empty set and $U_1$.  In this case the unions $V_+\cup(X\setminus U_1)$ and 
$V_-\cup(X\setminus U_1)$ are both different from $X$.  It is easy to see 
that $f$ maps the first of these unions onto the second.  Since the set 
$X\setminus U_1$ is disjoint from $V_+$ and $V_-$, Property \ref{prop-CI} 
of the group $\cG$ implies that $f_0(V_+)=V_-$ for some $f_0\in\cG$.  Note 
that the set $f^{-1}(V_-)$ is disjoint from $U_1$.  It follows that the 
generalized $2$-cycle $g=\de_{V_+;f^{-1}f_0}$ is defined and belongs to 
$\F(\cG)=\cG$.  Since $\supp(f)\subset U_1\cup U_2$, we obtain that 
$f^{-1}(V_-)\subset U_2\setminus U_1$.  Then the support $\supp(g)=V_+\cup 
f^{-1}(V_-)$ is contained in $U_1\cup U_2$ as well.  Therefore the map $g$ 
belongs to $\cG_{U_1\cup U_2}$, and so does the map $h=fg$.  By 
construction, $h(V_+)=V_-$ and $h(U_1\setminus V_+)=f(U_1\setminus V_+)$, 
which implies that $h(U_1)=U_1$.  Hence the map $h$ has been covered by the 
first case.  By the above, $h\in\langle\cG_{U_1}\cup\cG_{U_2}\rangle$.  
Since $g\in\sS(\cG_{U_1\cup U_2})$, we conclude that the map $f=hg^{-1}$ 
belongs to $\langle\cG_{U_1}\cup\cG_{U_2}\rangle$ as well.

It remains to consider the case when one of the sets $V_+$ and $V_-$ is 
empty while the other is not.  This case will be reduced to the previous 
one.  Without loss of generality, we may assume that $V_+=\emptyset$ and 
$V_-\ne\emptyset$ (otherwise we could replace $f$ with $f^{-1}$).  Then 
$f(U_1)$ is a proper subset of $U_1$.  It follows that for any $j\in\bZ$ 
the set $f^{j+1}(U_1)$ is a proper subset of $f^j(U_1)$.  Note that 
$V_-=U_1\setminus f(U_1)$.  Since $f^j(V_-)=f^j(U_1)\setminus f^{j+1}(U_1)$ 
for all $j\in\bZ$, we obtain that the sets $f^j(V_-)$, $j\in\bZ$ are 
disjoint from one another.  The set $f^j(V_-)$ is contained in $U_1$ for 
$j\ge0$ and disjoint from $U_1$ for $j<0$.  Since $\supp(f)\in U_1\cup 
U_2$, we have $f^j(V_-)\subset U_2\setminus U_1$ for $j<0$.  Consider a 
generalized $2$-cycle $g=\de_{f(V_-);f^{-2}}=\mu[V_-;f,f^{-1};(1\,2)]$.  
The map $g$ is well defined and belongs to the ample group $\cG$.  Since 
$\supp(g)=f(V_-)\cup f^{-1}(V_-)$, this map is in $\cG_{U_1\cup U_2}$, and 
so is the map $h=gf$.  By construction, $h(V_-)=f^{-1}(V_-)$ and $h=f$ on 
the set $U_1\setminus V_-$.  Furthermore, $h^{-1}(f(V_-))=f^{-2}(V_-)$ and 
$h^{-1}=f^{-1}$ on $U_1\setminus f(V_-)$.  It follows that $\{x\in U_1\mid 
h(x)\notin U_1\}=V_-$ and $\{x\in U_1\mid h^{-1}(x)\notin U_1\}=V_-\cup 
f(V_-)$.  Observe that both of the latter sets are different from 
$\emptyset$ and $U_1$.  Therefore the map $h$ has been covered by the 
previous case.  By the above, $h\in\langle\cG_{U_1}\cup\cG_{U_2}\rangle$.  
Since $g\in\sS(\cG_{U_1\cup U_2})$, we obtain that the map $f=g^{-1}h$ 
belongs to $\langle\cG_{U_1}\cup\cG_{U_2}\rangle$ as well.
\end{proof}

\begin{proposition}\label{who-implies-E}
For an ample group acting minimally on a Cantor set, any of Properties 
\ref{prop-UR}, \ref{prop-NC}, \ref{prop-SC}, \ref{prop-UC}, \ref{prop-CI} 
and \ref{prop-HS} implies Property \ref{prop-E}.
\end{proposition}

\begin{proof}
By Lemma \ref{NC-equiv-UR}, Property \ref{prop-UR} is equivalent to 
Property \ref{prop-NC}.  It follows from Lemma \ref{UC-equiv-M-and-SC} that 
for an ample group acting minimally on a Cantor set, Property \ref{prop-SC} 
is equivalent to Property \ref{prop-UC}.  For an ample group, either of 
Properties \ref{prop-NC} and \ref{prop-UC} implies Property \ref{prop-CI} 
due to Lemmas \ref{NC-implies-CI} and \ref{UC-implies-CI}.  By Lemma 
\ref{HS-implies-CI}, Property \ref{prop-HS} implies Property \ref{prop-CI} 
as well.  Finally,  by Lemma \ref{CI-implies-E}, Property \ref{prop-CI} 
implies Property \ref{prop-E} for any ample group acting minimally. 
\end{proof}

Now, equipped with Proposition \ref{who-implies-E}, we are going to provide 
more examples of ample groups with Property \ref{prop-E}.

\begin{definition}
A homeomorphism $f:X\to X$ of a Cantor set $X$ is called \textbf{minimal} 
if there is no closed set $Y\subset X$ different from the empty set and $X$ 
that is invariant under $f$: $f(Y)\subset Y$.
\end{definition}

An equivalent condition is that for any point $x\in X$, the \emph{forward 
orbit} $x,f(x),f^2(x),\dots$ is dense in $X$.  It is easy to show that the 
homeomorphism $f$ is minimal if and only if the cyclic group $\langle 
f\rangle$ generated by $f$ acts minimally on $X$ (in the sense of 
Definition \ref{def-minimality}).

There are many known examples of minimal homeomorphisms of Cantor sets 
(they are referred to as \emph{Cantor minimal systems}).

\begin{theorem}\label{E-for-cyclic-amplified}
For any minimal homeomorphism $f$ of a Cantor set, the ample group 
$\F(\langle f\rangle)$ has Property \ref{prop-E}.
\end{theorem}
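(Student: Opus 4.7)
The plan is to verify Property~\ref{prop-PE} for $\cG:=\F(\langle f\rangle)$; combined with Property~\ref{prop-M} (Proposition~\ref{properties-of-cyclic-amplified}), Lemma~\ref{M-and-PE-imply-E} will then yield Property~\ref{prop-E}. Note that Lemma~\ref{UR-implies-PE} does not apply directly, since a generic element of $\cG$ need not be universally recurrent: a point lying on a null set for the $f$-invariant measure can fail to be recurrent under a given $h\in\cG$.

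To verify Property~\ref{prop-PE}, fix a clopen set $U\subset X$ and an element $h\in\cG$, and set $A=U\cap h^{-1}(X\setminus U)$ and $B=(X\setminus U)\cap h^{-1}(U)$. These are disjoint clopen subsets of $X$. By Lemma~\ref{invariant-under-amplification}, every $f$-invariant Borel probability measure $\nu$ on $X$ is also $h$-invariant, and therefore
\[
\nu(A)=\nu(U)-\nu(U\cap h^{-1}(U))=\nu(h^{-1}(U))-\nu(U\cap h^{-1}(U))=\nu(B).
\]
Thus it suffices to establish the following ``clopen equivalence'' lemma and apply it to this pair $A,B$: \emph{if $A,B\subset X$ are disjoint clopen sets with $\nu(A)=\nu(B)$ for every $f$-invariant Borel probability measure $\nu$, then there exists $g\in\cG$ with $g(A)=B$ and $g(B)=A$.}

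The plan for the clopen equivalence lemma is a Kakutani--Rokhlin refinement argument. Starting from any small clopen base and using minimality of $f$, build a Kakutani--Rokhlin partition $X=\bigsqcup_{k,\ell}W_{k,\ell}$ fine enough that both $A$ and $B$ are unions of levels. Then iteratively refine the partition, by shrinking the base and splitting towers, until inside each resulting tower the number of levels lying in $A$ equals the number lying in $B$. Once such a per-tower balance is achieved, pair the $A$-levels with the $B$-levels inside each tower and realize the swap of each matched pair by an appropriate power of $f$; the resulting map is a product of commuting generalized $2$-cycles of the form $\de_{W;f^n}$, hence an element of $\cG$ sending $A$ onto $B$ and vice versa.

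The principal obstacle is the balancing step: proving that the hypothesis $\nu(A)=\nu(B)$ for every invariant measure really does permit a per-tower level-count matching in a sufficiently refined Kakutani--Rokhlin partition. The idea is that a persistent obstruction --- a sequence of refinements in which every partition has a tower with a level-count mismatch --- can be promoted, via a compactness argument on the space of $f$-invariant probability measures, into an actual invariant measure distinguishing $A$ from $B$, contradicting the hypothesis. This balancing step, essentially the classical clopen equivalence theorem for Cantor minimal systems, is the technical heart of the theorem; once it is in hand, the chain ``clopen equivalence $\Rightarrow$ Property~\ref{prop-PE} $\Rightarrow$ Property~\ref{prop-E}'' is purely formal.
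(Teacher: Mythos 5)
Your overall frame is sound: Property \ref{prop-PE} together with Property \ref{prop-M} does yield Property \ref{prop-E} via Lemma \ref{M-and-PE-imply-E}, and you are right that Lemma \ref{UR-implies-PE} cannot be invoked here since it is unclear whether $\F(\langle f\rangle)$ has Property \ref{prop-UR}.  The gap is in the ``clopen equivalence'' lemma you reduce to: the statement that $\nu(A)=\nu(B)$ for every $f$-invariant Borel probability measure $\nu$ forces some $g\in\F(\langle f\rangle)$ with $g(A)=B$ is false for general Cantor minimal systems.  If $g\in\F(\langle f\rangle)$ maps $A$ onto $B$, then $1_B-1_A$ is a coboundary, i.e.\ $[1_A]=[1_B]$ in $K^0(X,f)=C(X,\bZ)/\{h-h\circ f^{-1}\mid h\in C(X,\bZ)\}$, and by a theorem of Giordano--Putnam--Skau this cohomological condition is also sufficient.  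By contrast, your hypothesis only says that $[1_A]-[1_B]$ is annihilated by every state on $K^0(X,f)$, i.e.\ is an \emph{infinitesimal} of the dimension group.  Simple dimension groups with nonzero infinitesimals are realized by Cantor minimal systems, and for such a system there exist clopen sets $A,B$ with $\nu(A)=\nu(B)$ for every invariant $\nu$ that are not equivalent under the topological full group (your condition characterizes equivalence under the full group $[f]$, not under $[[f]]=\F(\langle f\rangle)$).  This is precisely where your balancing step breaks: a per-tower level-count mismatch that stays bounded (say, always $\pm1$) while the tower heights grow normalizes to zero in the limit, so the compactness argument on invariant measures cannot rule it out --- and such persistent bounded mismatches are exactly the infinitesimal phenomenon.

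The approach is salvageable by strengthening the hypothesis of your lemma to the cohomological one.  For your specific pair, $1_{U_{\mathrm{out}}}-1_{U_{\mathrm{in}}}=1_U-1_{h^{-1}(U)}$, which \emph{is} a coboundary because $h$ is piecewise a power of $f$; hence $[1_{U_{\mathrm{out}}}]=[1_{U_{\mathrm{in}}}]$ in $K^0(X,f)$ and the Giordano--Putnam--Skau clopen equivalence theorem yields the required $g$, after which your chain through Property \ref{prop-PE} goes through.  Note that this still leans on an external result of depth comparable to the ones the paper quotes, and the paper's own proof is entirely different: it bypasses Property \ref{prop-PE}, using instead the index map (Theorem \ref{index1}), Matui's factorization of its kernel (Theorem \ref{index2}) and the first-return construction (Lemma \ref{1st-return-map}) to write any element of $\F_{U_1\cup U_2}(\langle f\rangle)$ as a power of an element of $\F_{U_1}(\langle f\rangle)$ times an element of $\sS\bigl(\F_{U_1\cup U_2}(\langle f\rangle)\bigr)$, and then applies Proposition \ref{E-weakened}.
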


It is not hard to show that the cyclic group $\langle f\rangle$ has 
Property \ref{prop-UR} (and the equivalent Property \ref{prop-NC}).  
However, it is not clear whether the ample group $\F(\langle f\rangle)$ has 
these properties.  In general, it is not clear whether Property 
\ref{prop-NC} survives when the group is amplified.

To prove Theorem \ref{E-for-cyclic-amplified}, we are going to use some 
advanced results of Matui.  In \cite{Matui-homology} he introduced the 
notion of an \emph{almost finite} groupoid (see Definition 6.2 in 
\cite{Matui-homology}).  Later Kerr \cite{Kerr} defined almost finiteness 
for group actions (see Definition 8.2 in \cite{Kerr}).  Any continuous 
action of a countable group on a Cantor set gives rise to a groupoid.  In 
this particular case both notions are equivalent (see Example 8.5 in 
\cite{Kerr}).

\begin{theorem}[Matui {\cite{Matui-homology}}]
\label{almost-finite-implies-HS}
Let $G$ be a group of homeomorphisms of a Cantor set $X$. 
If the natural action of the group $G$ on $X$ is minimal and almost finite, 
then the ample group $\F(G)$ has Property \ref{prop-HS}.
\end{theorem}

Note that Matui's result (Theorem 6.12 in \cite{Matui-homology}) is 
formulated in terms of groupoids.  Theorem \ref{almost-finite-implies-HS} 
is a reformulation of it in a particular case when the groupoid is 
associated to a group action.

\begin{proposition}[Matui {\cite{Matui-homology}}]
\label{Zn-action}
Any free action of the group $\bZ^n$ on a Cantor set is almost finite.
\end{proposition}

Proposition \ref{Zn-action} is a less general version of Lemma 6.3 in 
\cite{Matui-homology}.

\begin{proof}[Proof of Theorem \ref{E-for-cyclic-amplified}]
Suppose $f$ is a minimal homeomorphism of a Cantor set $X$.  Minimality 
implies that $f$ has no periodic points.  Hence the natural action of the 
cyclic group $\langle f\rangle$ on $X$ is free.  Since $\langle f\rangle$ 
is isomorphic to $\bZ$, that action is almost finite due to Proposition 
\ref{Zn-action}.  Besides, the action of $\langle f\rangle$ is minimal.  
Then it follows from Theorem \ref{almost-finite-implies-HS} that the ample 
group $\F(\langle f\rangle)$ has Property \ref{prop-HS}.  Since the group 
$\langle f\rangle$ acts minimally on $X$, so does its amplification.  It 
remains to apply Proposition \ref{who-implies-E}.
\end{proof}

The notion of almost finiteness has been extensively studied.  It is 
expected that any free minimal action of a countable amenable group on the 
Cantor set is almost finite (see Problem 8.9 in \cite{Kerr}).  Let us 
mention a recent result of Kerr and Naryshkin \cite{KerrNar} in that 
direction.

\begin{theorem}[Kerr, Naryshkin {\cite{KerrNar}}]
\label{elem-amenable}
Every free continuous action of a countable elementary amenable group on a 
finite-dimensional compact metrizable space is almost finite.
\end{theorem}

In view of Theorem \ref{elem-amenable} (Theorem A in \cite{KerrNar}), the 
cyclic group $\langle f\rangle$ in Theorem \ref{E-for-cyclic-amplified} can 
be replaced by any countable elementary amenable group $G\subset\Homeo(X)$ 
such that the natural action of $G$ on the Cantor set $X$ is minimal and 
free.

There is an alternative way to prove Theorem \ref{E-for-cyclic-amplified} 
using more elementary means.  Let us outline it.  Suppose $f$ is a minimal 
homeomorphism of a Cantor set $X$.  For any point $x\in X$, consider its 
forward orbit $\{f^k(x)\mid k\ge0\}$ with respect to $f$ and the stabilizer 
of this orbit  under the natural action of the ample group $\cG=\F(\langle 
f\rangle)$.  By Lemma \ref{ample-stab-set}, the stabilizer 
$\St_{\cG}(\{f^k(x)\mid k\ge0\})$ is also an ample group.

\begin{proposition}[Putnam {\cite{Putnam89}}]
\label{stab-forward-orbit}
The stabilizer $\St_{\cG}(\{f^k(x)\mid k\ge0\})$ is a locally finite group.
\end{proposition}

Proposition \ref{stab-forward-orbit} can be extracted from the results of 
Section 5 in \cite{Putnam89} (as asserted in \cite{GPS}).  Note that the 
group $\F(\langle f\rangle)$ appears only implicitly in \cite{Putnam89} as 
everything is formulated in terms of a $C^*$-algebra associated with the 
minimal homeomorphism $f$.  A more general fact is explicitly formulated 
and proved by Juschenko and Monod (see Lemmas 4.1 and 4.2 in 
\cite{JuschMonod}).

\begin{proposition}\label{same-orbits-clopen}
The action of the subgroup $\St_{\cG}(\{f^k(x)\mid k\ge0\})$ on clopen 
subsets of the Cantor set $X$ has the same orbits as the action of the 
entire group $\cG$.
\end{proposition}

Assuming Proposition \ref{same-orbits-clopen} is proved, the proof of 
Theorem \ref{E-for-cyclic-amplified} goes as follows.  Since the stabilizer 
$\St_{\cG}(\{f^k(x)\mid k\ge0\})$ is locally finite, every element has 
finite order.  Hence this group obviously has Property \ref{prop-UR}.  
Also, it has Property \ref{prop-NC} (this is easy to check directly, or we 
can use Lemma \ref{NC-equiv-UR}).  As the stabilizer is an ample group, 
Lemma \ref{NC-implies-CI} implies that it has Property \ref{prop-CI}.  Then 
it follows from Proposition \ref{same-orbits-clopen} that the group $\cG$ 
has Property \ref{prop-CI} as well.  It remains to apply Proposition 
\ref{who-implies-E}. 

We do not include the proof of Proposition \ref{same-orbits-clopen} as it 
is quite lengthy.  It will be published elsewhere.

There are also completely different kinds of ample groups with Property 
\ref{prop-E} where that property holds since the groups allow strong 
contraction of clopen sets (Property \ref{prop-SC}).  First of all, let us 
mention Thompson's group $V$ (already considered above) as well as its 
various generalizations including the Higman-Thompson groups $V_{n,r}$, 
also denoted $G_{n,r}$ (see, e.g., \cite{BCMNO}), the Brin-Thompson groups 
$nV$ (see \cite{Brin}) and others.  Each of those groups can be realized as 
a group of homeomorphisms of a Cantor set that is easily shown to be 
ample.  Some of the groups are not simple so that the argument in Remark 
\ref{ThompsonV} may not apply to them.  However, it is not hard to prove 
that any of the groups $V_{n,r}$ and $nV$ acts transitively on the set of 
all nontrivial clopen subsets of the respective Cantor set.  This 
immediately implies minimality and Properties \ref{prop-SC}, \ref{prop-UC}, 
\ref{prop-CI} and \ref{prop-HS}.

Another class of ample groups with Property \ref{prop-E} are the 
(topological) \emph{full groups} of minimal, purely infinite \'etale 
groupoids considered by Matui in \cite{Matui-pureinf1,Matui-pureinf2}.  The 
full group is ample by construction.  Minimality of the groupoid translates 
into minimality of the natural action of the group while pure infiniteness 
translates into Property \ref{prop-SC}.  For example, a purely infinite 
groupoid can be associated to any one-sided subshift of finite type over a 
finite alphabet (see Section 6 in \cite{Matui-pureinf1}).  Note that the 
ample group obtained from the groupoid associated to the full shift on $n$ 
symbols is isomorphic to the Higman-Thompson group $V_{n,1}$.

\bigskip

{\sc
\begin{raggedright}
Department of Mathematics\\
Texas A\&M University\\
College Station, TX 77843--3368
\end{raggedright}
}

\end{document}